\crefname{hypothesis}{Hypothesis}{Hypotheses}
\title{An Oscillation-free Spectral Volume Method for Hyperbolic Conservation Laws \thanks{Submitted to the editors DATE.
\funding{This research is supported by National Natural Science Foundation of China under grants No. 11871026, 12271049, 11701038.}}}
\author{Xinyue Zhang\thanks{Laboratory of Mathematics and Complex Systems, School of Mathematical Sciences, Beijing Normal University, Beijing, China  
  (\email{zhang\_xinyue@mail.bnu.edu.cn}).}
 \and Waixiang Cao\thanks{Laboratory of Mathematics and Complex Systems, School of Mathematical Sciences, Beijing Normal University, Beijing, China  
 (\email{caowx@bnu.edu.cn}), corresponding author.}
 \and Liang Pan\thanks{Laboratory of Mathematics and Complex Systems, School of Mathematical Sciences, Beijing Normal University, Beijing, China  
 (\email{panliang@bnu.edu.cn}).}
}
\begin{document}

\maketitle

\begin{abstract}
In this paper, an oscillation-free spectral volume (OFSV) method  is proposed and studied for the hyperbolic conservation laws.  
The numerical scheme is designed by introducing a damping term in the standard spectral volume method for the purpose of 
controlling spurious oscillations near discontinuities. Based on the construction of control volumes (CVs), two classes of  OFSV 
schemes are presented. A mathematical  proof is provided to show that the proposed OFSV is stable and has optimal convergence rate  
and some desired superconvergence properties when applied to the linear scalar equations.  Both analysis and numerical experiments  
indicate that the damping term would not destroy the order of accuracy of the original  SV scheme and can control the  oscillations  
discontinuities effectively. Numerical experiments are presented to demonstrate the accuracy and robustness of our scheme. 
\end{abstract}

\begin{keywords}
Spectral volume method, oscillation-free damping term, optimal error estimates, superconvergence.
\end{keywords}

\begin{MSCcodes}
65M08, 65M15, 65M60, 65N08
\end{MSCcodes}

\section{Introduction}

The development of high-order schemes for hyperbolic conservation laws has becomes extremely
demanding for computational fluid dynamics. In recent years, a great number  high-order numerical
schemes have been developed, including discontinuous Galerkin (DG) \cite{DG0,DG1,DG2},  
spectral volume (SV) \cite{SV1,SV2,SV3,SunWang2014}, spectral difference (SD) \cite{SD}, correction procedure 
using reconstruction (CPR) \cite{CPR}, essential non-oscillatory (ENO)
\cite{ENO1,ENO2}, weighted essential non-oscillatory (WENO)
\cite{WENO-Liu,WENO-JS,WENO-Z}, Hermite WENO (HWENO)
\cite{HWENO1,HWENO2,HWENO3} methods, etc.

One main difficulty for the high-order numerical schemes is the spurious oscillations near discontinuities, which lead to the nonlinear instability and eventual blow up of the codes.
Therefore, it is important to eliminate the oscillations near discontinuities and maintain the original high-order accuracy in the smooth regions. 
Many limiters have been studied in the literatures for DG methods.  The minmod total variation bounded (TVB) limiter \cite{DG1,TVB1,TVB2} was originally developed, 
which is a slope limiter using a technique from the finite volume method \cite{MUSCL}. One disadvantage of such limiter is that it may degrade accuracy in smooth regions.
The WENO reconstructions are also used as limiter for the DG methods \cite{WENOlimiter1,WENOlimiter2,WENOlimiter3}. In these methods, the WENO reconstructions were 
used to reconstruct the values at Gaussian quadrature points in the target cells, and rebuild the solution polynomials from the original cell 
average and the reconstructed values through a numerical integration.  Although the original order of accuracy can be kept,  the WENO limiters need a very large stencil, 
which is complicated to be implemented in multi-dimensions, especially for unstructured meshes.  
To use the compact stencils, the HWENO limiters are developed  \cite{HWENO1,HWENO2,HWENO3}, which reduce the stencil of the reconstruction by utilizing
both the cell averages and the spatial derivatives from the neighbors.  The role of limiters is to “limit” and ”preprocess” numerical  solution in the
“troubled cells”, which is often identified by a troubled cell indicator. Another method is to add artificial diffusion terms in
the weak formulation,  which should be carefully designed to  ensure entropy stability and suppress the oscillations essentially \cite{ariticaldiffusion}. 
 Recently, an oscillation-free DG (OFDG) method was developed for the scalar hyperbolic
conservation laws in \cite{OFDG-1} by introducing a damping term on
the classical DG scheme.  The damping term was carefully constructed, which controls spurious nonphysical oscillations near discontinuities and maintains uniform high-order
accuracy in smooth regions simultaneously. One advantage of the damping
term was that the damping technique is convenient for the
theoretical analysis, at least in the semi-discrete analysis,
including conservation, $L^2$-stability, optimal error estimates,
and even superconvergence. The OFDG has also been extended  to the
hyperbolic systems  \cite{OFDG-2} and well-balanced shallow water equations \cite{OFDG-3}.

The main purpose of the current work is to adopt the idea of damping term in \cite{OFDG-1} to the 
standard SV method for  system of hyperbolic equations. 
The SV method was originally formulated and later developed  for hyperbolic equations by Wang  and his colleagues \cite{SV1,SV2,SV3}, 
which might be regarded as a generalization of the classic Godunov finite volume method \cite{Riemann-exact,Riemann-appro}.
The SV method enjoys many excellent properties such as  high-order accuracy, compact stencils,
and geometrical flexibility (applicable for unstructured grids). In particular, since the SV method preserves conservation laws 
on more finer meshes, it might have a higher resolution for discontinuities  than other high order methods (see \cite{SunWang2014}).  
During the past decades, 
the SV method has been rapidly applied to solving various PDEs such as  the shallow
water wave equation \cite{SVShallow},  Navier Stokes equation
\cite{SVNS1,SVNS2}, and electromagnetic field \cite{SV1}, and so on. 
A mathematical analysis   in terms of the $L^2$ stability, accuracy,  error estimates, and superconvergence of the 
SV method was conducted  
in \cite{SV-Cao} under the framework of the  Petrov-Galerkin method.
It was also proved in \cite{SV-Cao} that a special class of SV scheme is exactly the same as the upwind DG schemes
when applied to linear constant hyperbolic equations.

By introducing the idea of damping term into the SV scheme,  a new OFSV method is proposed in this article. 
The OFSV method inherits both advantages of the standard SV method and the damping term. Specifically, 
In  one hand,  properties  such as high order accuracy, local conservation,  $h$-$p$ adaptivity, flexibility of handling unstructured meshes can still be preserved for OFSV method. 
On the other hand,   the OFSV method could effectively 
 control spurious nonphysical
oscillations near discontinuities and maintaining uniform high-order
accuracy in smooth regions,  without any use of limiters. 
Furthermore, note  that the SV method has larger CFL condition number than the DG method (see, e.g., 
\cite{SVDG-2}), which suggests that the proposed OFSV method might have a looser stability condition than the 
counterpart OFDG method.

The rest of this paper is organized as follows. In Section 2,
the OFSV method  for systems of hyperbolic
conservation laws is introduced. In Section 3,  a theoretical analysis is provided 
to show that proposed OFSV method is stable and has optimal convergence rate and superconvergnce 
property similar to the standard SV method, when applied to
for one-dimensional constant-coefficient linear scalar problem. 
In Section 4, we present some numerical examples to show the efficiency and robustness of our algorithm. 
 Finally, some concluding remarks are presented in Section 5.

\section{Oscillation-free SV method}

We consider the OFSV method for the following hyperbolic system
\begin{equation}\label{conlaws2d}
\left\{\begin{array}{l}
\boldsymbol{U}_t+\displaystyle\sum_{i=1}^{d} \boldsymbol{F}_i(\boldsymbol{U})_{x_i}=0, \quad(\boldsymbol{x}, t) \in \Omega \times(0, T], \\
\boldsymbol{U}(\boldsymbol{x}, 0)=\boldsymbol{U}_0(\boldsymbol{x}), \quad\boldsymbol{x} \in \Omega,
\end{array}\right.
\end{equation}
where $ \boldsymbol{x}=(x_1,\ldots, x_d)^T,$
$\boldsymbol{U}=(U_1,
\ldots, U_m)^T$, $\boldsymbol{F}_i(\boldsymbol{U})=(F_{i,
1}(\boldsymbol{U}), \ldots, F_{i, m}(\boldsymbol{U}))^T,  1\le i\le d$, and $\Omega $ is an open bounded domain in $\mathbb{R}^d$. 
  A large amount of physical models can be rewritten into the form of \eqref{conlaws2d}, such as the 
Euler equations for the  two-dimensional inviscid compressible flow, where $\boldsymbol{U}=\left(\rho,\rho U, \rho V,\rho E \right)$, 
$\boldsymbol{F}_1(\boldsymbol{U})=(\rho U,  \rho U^2+p,  \rho UV, (\rho E+p) U)$,
$\boldsymbol{F}_2(\boldsymbol{U})=(\rho V,  \rho UV,  \rho V^2+p, (\rho E+p) V)$
with $\rho E= 1/2\rho (U^2+V^2)+p/(\gamma-1)$ for the two-dimensional ideal gas.

  \begin{figure}[!h]
    \centering
    \includegraphics[width=0.3\textwidth]{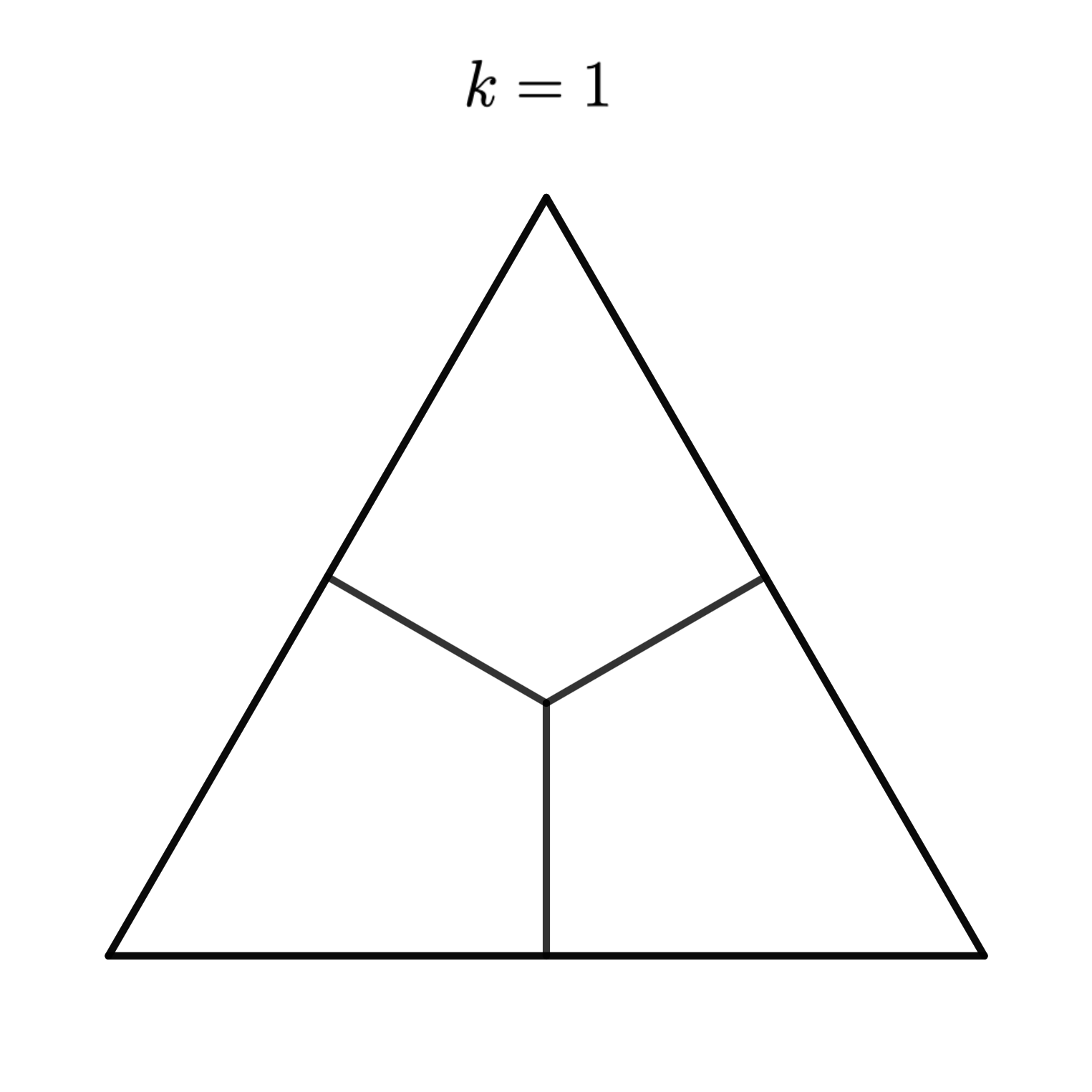}
    \includegraphics[width=0.3\textwidth]{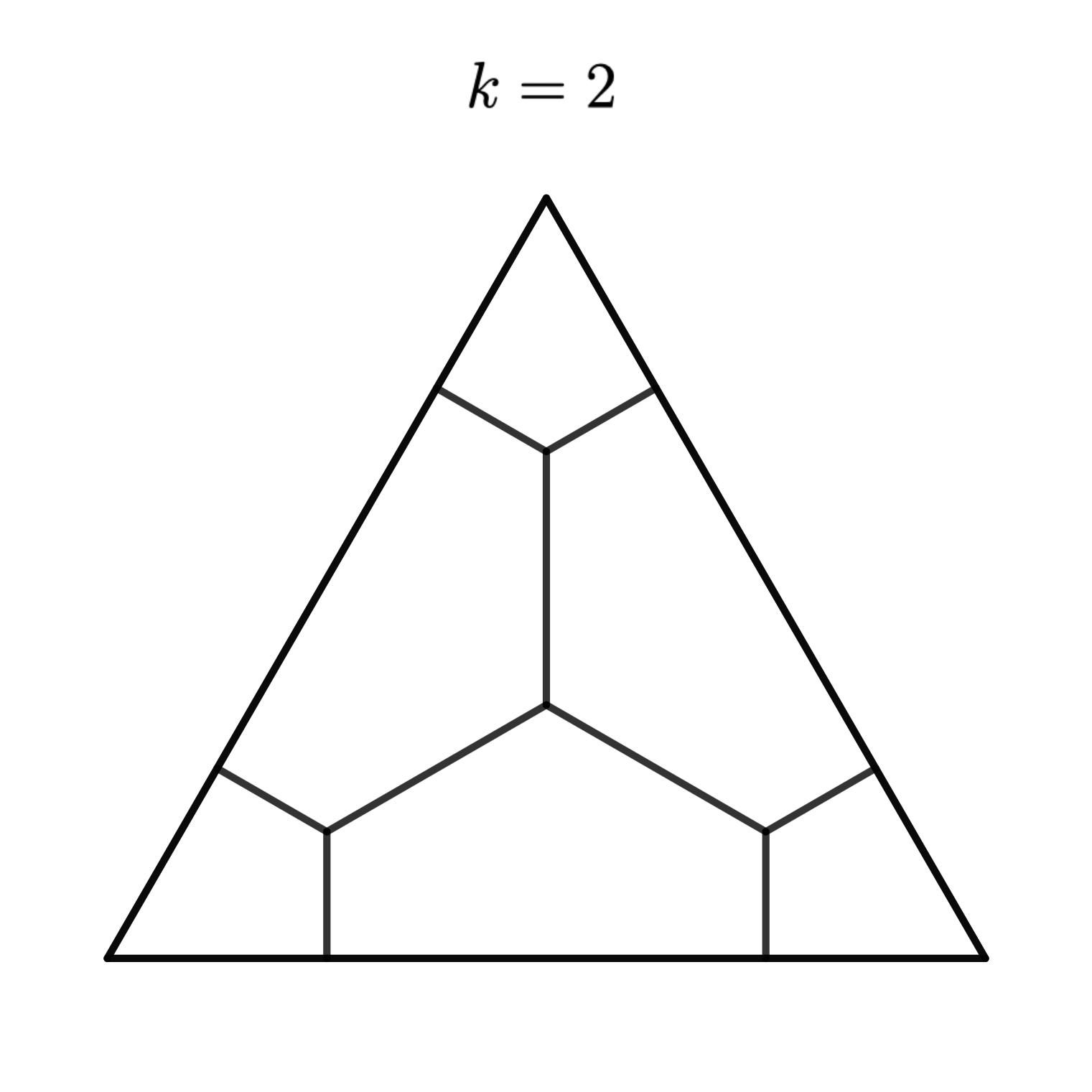}\\
    \includegraphics[width=0.3\textwidth]{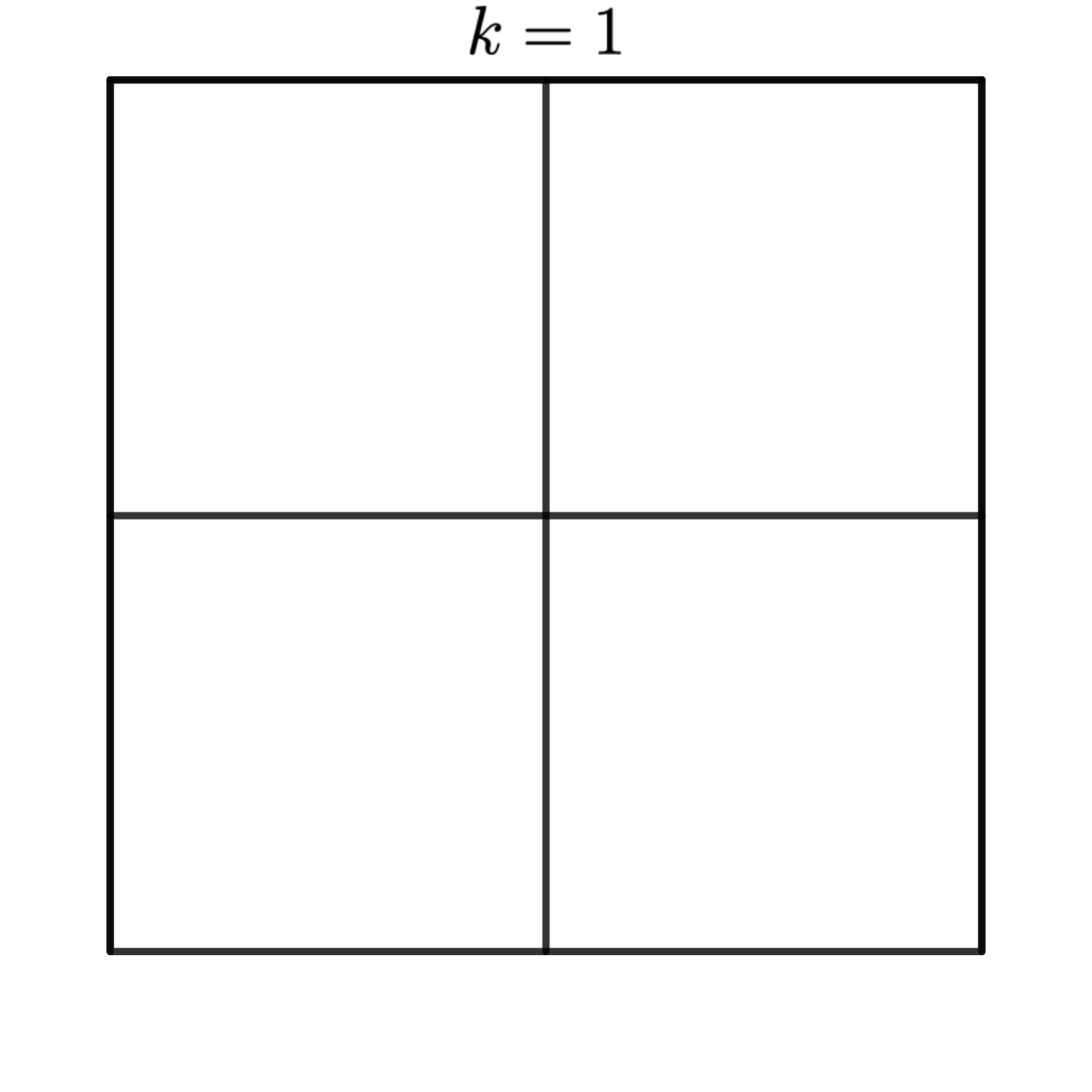}
    \includegraphics[width=0.3\textwidth]{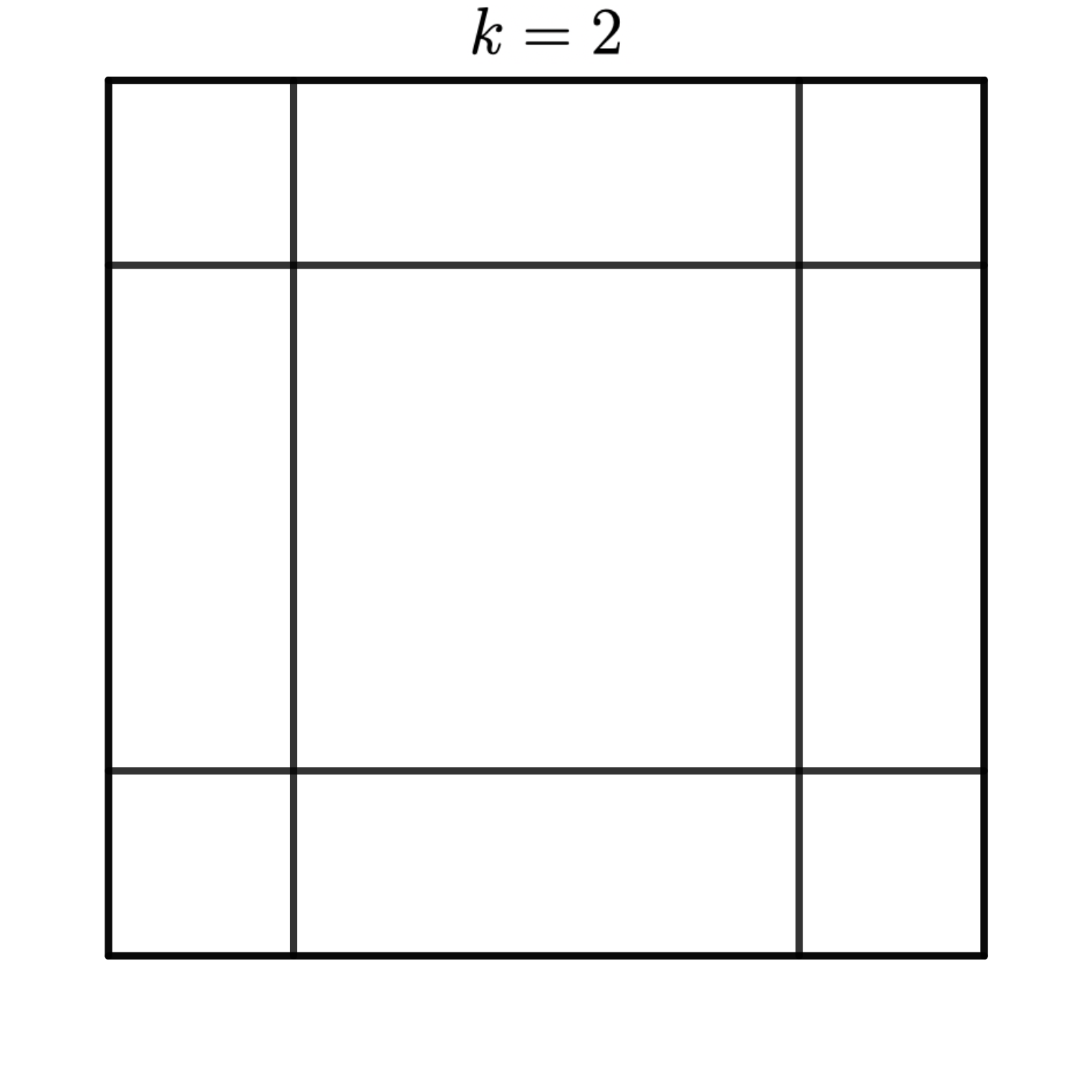}
    \caption{\label{CVs}  Partitions of a triangular SV for  $\mathcal{P}_k$ and a rectangular SV 
    for $\mathcal{Q}_k$ in 2D with $k=1,2$.}
    \end{figure}

To introduce the OFSV scheme, we first suppose that 
there exists a partition  $\mathcal{T}_h$ of $\Omega$ and  $\mathcal{T}_h$  is shape regular, 
 i.e., there exists a constant  $c>0$ such that 
\begin{equation*}
h \leq c h_{\tau},\ {\rm with}\ h = \displaystyle\max_{\tau \in \mathcal{T}_h} h_{\tau}, h_{\tau}=\operatorname{diam} \tau, ~ \tau\in \mathcal{T}_h.
\end{equation*}  
The (discontinuous) finite element space is defined as follows:
\begin{equation*}\label{trailespace}
     \mathcal{U}_{h}=\mathcal{U}_{h}^{k}=\{v \in L^{2}(\Omega):\left.v\right|_{\tau} \in  \mathcal{P}_k\  {\rm or}\  \mathcal{Q}_{k}, \tau \in \mathcal{T}_{h}\},
\end{equation*}
where $\mathcal{P}_k$ and $\mathcal{Q}_{k}$  denote the finite element space  of polynomials  with degree not greater than $k$ and 
  the $bi$-$k$ tensor product polynomial space, respectively. 
In the SV method, a simplex grid element $\tau$, called a spectral volume (SV), is futher divided into non-overlapping sub-elements $\{\tau_j^*\}_j$, 
named control volumes (CVs). The number of CVs hinges on the cardinality of the polynomials.  
To illustrate,  in 2D case, a SV is segmented into $N$ CVs with $N=(k+1)(k+2)/2$ for $\mathcal{P}_k$ elements, 
and $N=(k+1)^2$ for $\mathcal{Q}_k$ elements. Figure.1 shows partitions of a triangular SV and a rectangular SV  for linear and quadratic elements in 2D.

    Denote by ${\mathcal{T}^*_h}$ the dual partition of $ {\mathcal{T}_h}$, i.e., ${\mathcal{T}^*_h}=\{\tau_j^*: \tau\in {{\mathcal T}_h}\}.$ 
    The OFSV scheme for  equation \eqref{conlaws2d} read as:
    Seek $\boldsymbol{U}_h(\cdot, t) \in\left[\mathcal{U}_h^k\right]^m$ such that for all $\tau_j^*\in {\mathcal{T}^*_h}$,
%    \begin{equation}\label{OFSV2D}
%    \begin{aligned}
%    \int_{\tau_j^*}\left(\boldsymbol{U}_h\right)_{t} \text{d}
%    \boldsymbol{x}=\sum_{i=1}^d\int_{\partial\tau_j^*}
%    \widehat{\boldsymbol{F}_i} \cdot \boldsymbol{n}  \text{d}\Gamma
%    -\sum_{l=0}^{k} \frac{\sigma_{\tau}^{l}}{h_{\tau}}
%    \int_{\tau_j^*}
%    \left(\boldsymbol{U}_h-P_{h}^{l-1}\boldsymbol{U}_h\right)\text{d}
%    \boldsymbol{x}, 
%    \end{aligned}
%    \end{equation}
   
    \begin{equation}\label{OFSV2D}
    \begin{aligned}
    \int_{\tau_j^*}\left(\boldsymbol{U}_h\right)_{t} \text{d}
    \boldsymbol{x}=\int_{\partial\tau_j^*}
    \widehat{\boldsymbol{F}} \cdot \boldsymbol{n}  \text{d}\Gamma
    -\sum_{l=0}^{k} \frac{\sigma_{\tau}^{l}}{h_{\tau}}
    \int_{\tau_j^*}
    \left(\boldsymbol{U}_h-P_{h}^{l-1}\boldsymbol{U}_h\right)\text{d}
    \boldsymbol{x}, 
    \end{aligned}
    \end{equation}
    where  $\widehat{\boldsymbol{F}}=(\widehat{\boldsymbol{F}}_1,\cdots,\widehat{\boldsymbol{F}}_d)$, and $\boldsymbol{n} = (n_1,\cdots,n_d)$ is the unit outward normal
    with respect to $\tau_j^*$. The Gaussian quadrature is used for the integration of numerical fluxes.  For the CV interfaces inside the SV, the smooth Euler flux
    is used. For the SV interfaces, the numerical fluxes are provided by  Riemann fluxes to deal with discontinuities,
     such as  Lax-Friedrichs flux and HLLC flux \cite{Riemann-appro}. 
     $P_h^l$ is the standard $L^2$ projection into
    $[\mathcal{U}_h^l]^m, l \geq 0$  with $P_h^{-1}=P_h^0$, and 
     $\sigma_{\tau}^l$ are damping coefficients, which are adopted to control spurious oscillations near the discontinuities.    
     The damping coefficients $\sigma_{\tau}^l$ are given as follows 
    \begin{equation}\label{xigema_ij}
        \sigma_{\tau}^l=\frac{2(2 l+1)}{(2 k-1)} \frac{h^l}{l !} \max _{1 \leq s \leq m} \sum_{|\boldsymbol{\alpha}|=l}\left(\frac{1}{N_e} 
        \sum_{\boldsymbol{v} \in \partial \tau}\left(\left.\llbracket \partial^{\boldsymbol\alpha} V_s\rrbracket\right|_{\boldsymbol{v}}\right)^2\right)^{\frac{1}{2}}, 
    \end{equation}
    where $N_e$ is the number of edges of the element $\tau$,   $\boldsymbol{v} \in \tau$ are the vertices of
    $\tau$,  $\left.\llbracket w \rrbracket\right|_{\boldsymbol{v}}$  denotes the jump of  $w$ on the vertex $\boldsymbol{v}$,  $\boldsymbol{\alpha}$ is the multi-index of order $|\boldsymbol{\alpha}|=\alpha_1+\cdots+\alpha_d,$
    and $\partial^\alpha w$ is defined as
    \begin{equation*}
    \partial^{\boldsymbol{{\boldsymbol\alpha}}} \boldsymbol{V}
    =\frac{\partial^{|\boldsymbol{{\boldsymbol\alpha}}|} \boldsymbol{V}}{\partial x_1^{\alpha_1}\cdots \partial x_d^{\alpha_d}}
    =\partial_{x_1}^{\alpha_1}\cdots  \partial_{x_d}^{\alpha_d} \boldsymbol{V}.
    \end{equation*}
    Here  the variables $\partial_x^l \boldsymbol{V}=\left(\partial_x^l V_1, \ldots, \partial_x^l V_m\right)^T$ 
    are given by $\partial_x^l \boldsymbol{V}=\boldsymbol{R}^{-1} \partial_x^l \boldsymbol{U}_h$, 
    $\boldsymbol{R}$ is the matrix corresponding to right eigenvectors of Jacobian matrix 
    $\partial \boldsymbol{F} /\partial \boldsymbol{U}((\overline{\boldsymbol{U}}_h)_{i+1/2})$ 
    and $\overline{(\cdot)}_{i+1/2}$ stands for the Roe average of variables at both side of point. 
    The numerical scheme \eqref{OFSV2D}
     reduces to the  standard SV method with $ \sigma_{\tau}^l=0$.

\begin{remark} 
The introduction of the damping term originates  from the OFDG method for hyperbolic conservation laws in \cite{OFDG-1}. 
Usually, higher order terms are  treated as sources of non-physical spurious oscillation. 
Therefore, the basic idea is to fix high order term by constructing a serious coefficients, 
which are small enough in the smooth region to guarantee the high order accuracy and 
sufficiently large near discontinuities to control spurious oscillations.  A natural way to establish 
the coefficients is adopting the jump of SVs vertexes, but it is not unique. More studies still need to be investigated in the  future research. 
\end{remark}

    \section{Analysis for constant coefficient hyperbolic problems}

    This section is dedicated to the analysis of the OFSV method, where the stability, convergence and 
    accuracy are discussed.  For simplify and clarity,  we focus our attention on the following 1D constant-coefficient linear advection
    equation
    %In this section, the accuracy of the OFSV method is revealed
    %involving the $L^2$ norm stability, optimal estimate,
    %superconvergence. 
    %Since it is too challenging to estimate the error
    %of Euler equations, the analysis is presented  for the constant-coefficient linear advection
    %equation 
    \begin{equation}\label{1dlinear}
        \begin{cases}
            \partial_t u+\partial_x u=0, &(x,t) \in [a, b]\times(0,T],\\
            u(x, 0)=u_0(x), & x \in(a, b),
        \end{cases}
    \end{equation}
     with  the periodic boundary condition $u(a, t)=u(b, t)$ or the inflow boundary condition $u(0, t)=g(t)$. 
     The numerical fluxes are taken as upwind fluxes. 
     We would like to point out that 
    the same argument can be applied to the ${\mathcal Q}_k$ element for 
    2D linear scalar equations or systems, with some tedious  calculationas. 
    
    %where $u=u(x, t)$ is the state variable and $u_0$ is a smooth initial function.
    %The periodic boundary condition $u(a, t)=u(b, t)$ or inflow boundary condition $u(0, t)=g(t)$ is imposed, 
    
    \color{black}
    
    \subsection{OFSV Method as a Petrov Galerkin Method}
    Let 
    \[
       {\mathcal T}_h=\{\tau_i: \tau_i=(x_{i-\frac 12}, x_{i+\frac 12}), \ 1\le i\le N\}. 
    \]
     Assume that the $i$-th SV $\tau_i$ is partitioned into $k+1$ CVs
     with $k$ points $x_{i,j}, 1\le j\le k$, denoted by $\tau_{i,j}, j=0,\cdots, k$ with 
     $\tau_{i,j}=(x_{i,j}, x_{i,j+1})$. Here $x_{i,0}=x_{i-\frac 12}, x_{i,k+1}=x_{i+\frac 12}$. 
     Then the dual partition 
     ${\mathcal T}_h^*$ can be expressed by 
     \[
        {\mathcal T}^*_h=\{\tau_{i,j}: \tau_{i,j}=(x_{i,j}, x_{i,j+1}), 1\le i\le N, 0\le j\le k\}. 
     \]
     Noticing that different choices of the dual partitions (i.e., $x_{i,j}$) leads to different SV scheme, which may have effect on 
     the stability of the numerical scheme.  Furthermore, as pointed out in \cite{SV-Cao},  if $x_{i,j}$ are taken as right Radau points, 
    the SV scheme is exactly the same as the standard upwind DG scheme. 
     In this paper, two classes of SV schemes 
      will be analyzed theoretically:  1) $x_{i,j}, 1\le j\le k$ are chosen as the Gauss points; 2) 
      $x_{i,j}, 1\le j\le k$  are taken as the interior right Radau points, (i.e., zeros of $L_{i,k+1}-L_{i,k}$  except the point $x_{i+\frac 12}$).
    Here $L_{i,k}$ denotes
      the Legendre polynomial of degree $k$ in  $\tau_i$.  The corresponding two  SV methods are separately referred as 
     the Gauss Legendre spectral volume (OF-LSV) method and the right Radau spectral volume (OF-RRSV) method.  
    
     To study the stability of OF-LSV and OF-RRSV, we first rewrite the SV scheme into its equivalent Petrov-Galerkin method. 
     Define the piecewise
     constant function space as
     \begin{equation*}
     \mathcal{V}_h=\left\{w^*: w^* \mid_{\tau_{i, j}} \in \mathcal{P}_0, 1 \leq i \leq N, 0 \leq j \leq k\right\}.
     \end{equation*}
     Obviously, any function $w^*=w^*(x, t) \in \mathcal{V}_h$ has the following formulation 
     $$
     w^*(x, t)=\sum_{i=1}^N \sum_{j=0}^k w_{i, j}^* \chi_{\tau_{i, j}}(x),
     $$
     where $w_{i, j}^*=w_{i, j}^*(t)$ are coefficients as functions of the variable $t$,  and $\chi_A, A \subset[a, b]$ is the characteristic function valuated $1$ in $A$ and $0$ otherwise. Let 
     $$
     \mathcal{H}_h=\left\{v: v|_{\tau_i} \in H^1, 1 \leq i \leq N\right\}. 
     $$
     %where $H^1\left(I_i\right)=\left\{v \in L^2\left(I_i\right): D v \in L^2\left(I_i\right)\right\}$ is the standard Sobolev space limited on $I_i$. Obviously, the trial space $\mathcal{U}_h$ in Eq.\eqref{trailespace} is a subspace of the above broken Sobolev space.
     Denote by $a_h(\cdot, \cdot)$ the bilinear form defined on $\mathcal{H}_h \times \mathcal{V}_h$, i.e., 
     \begin{equation}\label{bilinear:0}
         a_h\left(v, w^*\right)=\sum_{i=1}^N a_{h_i}\left(v, w^*\right), \quad v \in \mathcal{H}_h, w^* \in \mathcal{V}_h,
     \end{equation}
     with
     \begin{equation}\label{bilinear:1}
     a_{h_i}\left(v, w^*\right):=\sum_{j=0}^k w_{i, j}^*\big(\int_{x_{i, j}}^{x_{i, j+1}} v_td x+v^-_{i, j+1}-v^-_{i, j}
     + \displaystyle\sum^k_{l=0} \frac{\sigma_{i}^{l}}{h_{i}} \int_{x_{i, j}}^{x_{i, j+1}}(v_{h}-P_{h}^{l-1} v_{h})  d x\big),
     \end{equation}
     with $\sigma_{i}^{l} = \sigma_{{\tau}_i}^{l}$ given by \eqref{xigema_ij}. 
     Here $v^-_{i,j}$ denotes the right limit of $v$ at the point $x_{i,j}$. 
     
     Recalling the OFSV scheme in \eqref{OFSV2D}, 
     the OFSV method for \eqref{1dlinear} is to find a 
     $u_h \in \mathcal{U}_h$  such that 
     \begin{equation}\label{Gar}
         a_h\left(u_h, w^*\right)=0, \quad \forall w^* \in \mathcal{V}_h .
     \end{equation}
     Conversely, if $u_h \in \mathcal{U}_h$ satisfies the equation \eqref{Gar}, then
     $u_h$  is the solution of  \eqref{OFSV2D}, by
     choosing $w^*= \chi_{{\tau}_{i,j}} $.  In other words, the OFSV method
      is equivalent to the  numerical method \eqref{Gar}. 
     % Note that when damping coefficients 
     %$ \sigma_{i}^{l}=0$, then the bilinear form $a_{h}(\cdot, \cdot)$ is the same as 
     %that of the standard SV defined in \cite{SV-Cao}. 
     
     We end with this subsection the discussion about the bilinear form between the OFSV method and the 
     standard SV method. 
     Define  the bilinear form $a_{h}^{SV}(\cdot,\cdot)$ of the standard SV method by  (see, e.g., \cite{SV-Cao})
     \begin{equation*}
       a_h^{SV}(v,w)=\sum_{i=1}^Na_{h_i}^{SV}\left(v, w^{*}\right),\  {\rm with}\ 
       a_{h_i}^{SV}\left(v, w^{*}\right)=\sum_{j=0}^k w_{i, j}^* (\int_{x_{i, j}}^{x_{i, j+1}} v_td x+v^-_{i, j+1}-v^-_{i, j}). 
      \end{equation*}
       In light of \eqref{bilinear:0}-\eqref{bilinear:1},  there holds 
       \begin{equation*}\label{OFSV-SV}
        \begin{split}
        a_{h}\left(v, w^*\right)=a_{h}^{SV} (v,w^*)+ \sum^N_{i=1}\sum^k_{l=0} \frac{\sigma_{i}^{l}}{h_{i}} \left(v-P_{h}^{l-1} v, w^* \right).
        \end{split}
    \end{equation*}

    \subsection{$L^2$ stability}
      We first recall a  special transformation the trial space to the test space
      $\mathcal{F}: \mathcal{U}_h \rightarrow \mathcal{V}_h$, which is of great importance in our stability analysis. 
    For all $w \in \mathcal{U}_h$, let 
    \begin{equation*}
    \mathcal{F}  w=w^*:=\sum_{i=1}^N \sum_{j=0}^k w_{i, j}^*(t) \chi_{{\tau}_{i, j}}(x) \in \mathcal{V}_h,
    \end{equation*}
    where the coefficients $w_{i, j}^*$ are given as
    \begin{equation*}\label{w}
    w_{i, 0}^*=w_{i-\frac{1}{2}}^{+}, \quad w_{i, j}^*-w_{i, j-1}^*=A_{i, j} w_x\left(x_{i, j}\right),  \ 1\le j\le k.
    \end{equation*}
     Here $A_{i, j}$  denotes the 
    numerical quadrature weights corresponding the points $x_{i,j}$ in $\tau_i$.  
    Given any function $v\in {\mathcal H}_h$, 
    denote by $ R_i(v)$  the numerical quadrature error 
    between the exact integral 
    and its numerical quadrature $Q_k(v)$ in $\tau_i$, i.e., 
    \[
       R_i(v)=\int_{\tau_i} v dx-Q_k(v), \ \ {\rm with}\ Q_k(v)= \sum_{i=1}^{k+1} A_{i,j}v(x_{i,j}).
    \]
    Define 
    \[
       (v,w)_i=\int_{\tau_i} (vw) dx,\ (v,w)=\sum_{i=1}^N (v,w)_i,\ \ D_x^{-1} v=\int_{a}^x v dx. 
    \]
    It has been proved in  \cite{SV-Cao} that 
    \begin{equation}\label{integelerror}
      (v,w^*)_i- (v,w)_i=R_i\left(w_x D_x^{-1} v\right),\ \ \forall w\in {\mathcal U}_h, w^*={\mathcal F}w. 
    \end{equation}
     Due to the identity \eqref{integelerror},  the bilinear form of the OFSV can be rewritten into 
        \begin{equation}\label{OFSV-gar}
            a_{h_i}\left(v, w^*\right)=a_{h_i}^{SV}\left(v, w^*\right) + \sum^k_{l=0} \frac{\sigma_{i}^{l}}{h_{i}} \left(v-P_{h}^{l-1} v, w\right)_i+R_i \\\big(w_x\sum^k_{l=0} \frac{\sigma_{i}^{l}}{h_{i}}D^{-1}_x
    (v-P_{h}^{l-1} v)\big).
        \end{equation}
        Now we are ready to present the $L^2$ stability  for both OF-LSV and OF-RRSV method. 
        \begin{theorem}
             Suppose  $u_h$ is the solution of \eqref{Gar}. Then for both OF-LSV and OF-RRSV, 
             \begin{equation}\label{stabilityOFLSV}
              \|u_h(\cdot,t)\|_0 \lesssim \|u_0\|_0,\ \ \forall t>0. 
              \end{equation}
              %Here $C(t)=0$ for the periodic boundary condition and $C(t)=g^2(t)$ for the inflow boundary condition.
              \end{theorem}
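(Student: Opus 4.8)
The plan is to derive an energy balance by testing the weak formulation \eqref{Gar} with the special test function $w^* = \mathcal{F}u_h \in \mathcal{V}_h$ produced by the transformation $\mathcal{F}$. Summing the local identity \eqref{OFSV-gar} over all SVs, the relation $a_h(u_h, \mathcal{F}u_h)=0$ splits into three pieces:
\begin{equation*}
0 = a_h^{SV}(u_h, \mathcal{F}u_h) + \sum_{i=1}^N\sum_{l=0}^k \frac{\sigma_i^l}{h_i}\left(u_h - P_h^{l-1}u_h,\, u_h\right)_i + \sum_{i=1}^N R_i\Big((u_h)_x \sum_{l=0}^k \frac{\sigma_i^l}{h_i}D_x^{-1}(u_h - P_h^{l-1}u_h)\Big).
\end{equation*}
I would then show that the first piece controls $\tfrac12\tfrac{d}{dt}\|u_h\|_0^2$ from below while the two damping pieces are each non-negative; the estimate \eqref{stabilityOFLSV} follows at once upon integrating in time.

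For the standard SV part, I would invoke the energy identity of \cite{SV-Cao}: for both the Gauss and the right-Radau choices of $x_{i,j}$, and for every $w\in\mathcal{U}_h$, one has $a_h^{SV}(w,\mathcal{F}w)=\tfrac12\tfrac{d}{dt}\|w\|_0^2 + \mathcal{D}_h(w)$ with $\mathcal{D}_h(w)\ge 0$ collecting the upwind interface dissipation. This is where the quadrature residual $R_i((u_h)_x D_x^{-1}(u_h)_t)$ arising from the time-derivative term (not sign-definite on its own, since its leading coefficient scales like $a_k\dot a_k$) gets absorbed together with the flux jumps. The middle damping term is handled by the orthogonality of the $L^2$ projection: since $u_h-P_h^{l-1}u_h$ is $L^2(\tau_i)$-orthogonal to $P_h^{l-1}u_h$, we have $(u_h-P_h^{l-1}u_h,u_h)_i = \|u_h-P_h^{l-1}u_h\|_{0,\tau_i}^2\ge 0$, and because $\sigma_i^l\ge 0$ by \eqref{xigema_ij}, the whole middle sum is a genuine dissipation.

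The crux is the third (damping residual) term. I would argue by degree counting: on $\tau_i$ the factor $(u_h)_x$ has degree $k-1$, while $D_x^{-1}(u_h-P_h^{l-1}u_h)$ has degree at most $k+1$, so the integrand is a polynomial of degree at most $2k$ (the additive constant in $D_x^{-1}$ is irrelevant, since $R_i$ annihilates the degree-$(k-1)$ polynomial $(u_h)_x$). For OF-RRSV the underlying right-Radau rule $Q_k$ is exact up to degree $2k$, so this residual vanishes identically. For OF-LSV the $k$-point Gauss rule is exact up to degree $2k-1$, so only the degree-$2k$ part survives; writing $u_h|_{\tau_i}=\sum_m a_m L_{i,m}$, the leading coefficient of the integrand equals $c\,a_k^2\sum_{l=0}^k \sigma_i^l/h_i$ with $c>0$, and since the Gauss error $R_i$ is a positive multiple of $\int_{\tau_i}\prod_j(x-x_{i,j})^2\,dx$ times the top derivative, it carries the same sign as this leading coefficient and is therefore non-negative.

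Combining the three estimates yields $\tfrac12\tfrac{d}{dt}\|u_h\|_0^2\le 0$, whence $\|u_h(\cdot,t)\|_0\le\|u_h(\cdot,0)\|_0\lesssim\|u_0\|_0$ by the $L^2$-boundedness of the initial projection. I expect the main obstacle to be the sign/vanishing analysis of the damping residual: specifically, verifying that its degree-$2k$ leading coefficient factors cleanly as $a_k^2\sum_l \sigma_i^l/h_i$ (so it is intrinsically non-negative rather than merely absorbable into $\mathcal D_h$), and checking that the Legendre normalization constants produce the correct positive sign in the Gauss case and exact integration in the right-Radau case.
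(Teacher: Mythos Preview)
Your decomposition and your treatment of the two damping pieces are exactly the paper's argument: the $L^2$-orthogonality of $P_h^{l-1}$ turns the middle term into $\sum_{i,l}\frac{\sigma_i^l}{h_i}\|u_h-P_h^{l-1}u_h\|_{0,\tau_i}^2\ge 0$, and your degree count together with the positivity of the Gauss error constant gives the correct sign (resp.\ vanishing) for the damping residual in the OF-LSV (resp.\ OF-RRSV) case.

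The gap is in the first piece for OF-LSV. The identity you cite, $a_h^{SV}(w,\mathcal{F}w)=\tfrac12\tfrac{d}{dt}\|w\|_0^2+\mathcal{D}_h(w)$ with $\mathcal{D}_h(w)\ge 0$, is not what \cite{SV-Cao} establishes, and the time-derivative residual $\sum_i R_i\big((u_h)_x D_x^{-1}(u_h)_t\big)$ cannot be absorbed by the interface jumps: the jumps depend only on the instantaneous values of $u_h$, whereas the residual scales like $a_k\dot a_k$ and can be made large and negative irrespective of the jump sizes. So the pointwise conclusion $\tfrac{d}{dt}\|u_h\|_0^2\le 0$ does not follow.

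What actually happens (and what the paper compresses into its final invocation of \eqref{equi:1}) is that this residual is itself an exact time derivative: since the Gauss error sees only the degree-$2k$ part, one has $R_i\big((u_h)_x D_x^{-1}(u_h)_t\big)=\tfrac12\tfrac{d}{dt}R_i\big((u_h)_x D_x^{-1}u_h\big)$, hence $(u_{h,t},u_h^*)=\tfrac12\tfrac{d}{dt}(u_h,u_h^*)$. Your energy balance then reads $\tfrac{d}{dt}(u_h,u_h^*)\le 0$, and the norm equivalence $\|u_h\|_0^2\le(u_h,u_h^*)\lesssim\|u_h\|_0^2$ (the lower bound being precisely the non-negativity of $R_i(v_x D_x^{-1}v)$ you already verified, the upper from the inverse inequality) finishes the proof. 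Note this yields $\|u_h(\cdot,t)\|_0\lesssim\|u_0\|_0$ with a genuine constant rather than $\le$, consistent with the statement \eqref{stabilityOFLSV}.
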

              \begin{proof}
                Since the $k$-point Gauss numerical quadrature and the $(k+1)$-point right Radau numerical quadrature 
                is separately exact for polynomials of degree $2k-1$ and $2k$, there holds 
                $R_i(v)=0, v\in\mathcal P_{2k-1}$ for OF-LSV and 
                $R_i(v)=0, v\in\mathcal P_{2k}$ for OF-RRSV. 
               Then taking $v=w\in \mathcal U_h$ in   \eqref{OFSV-gar} and using the orthogonality property of $P_{h}^{l}$
               yields 
              \begin{equation}\label{mid2}
                          a_{h_i}\left(v, v^*\right)
                          %& = a_{h_i}^{SV}\left(v, v^*\right) + \sum^k_{l=0} \frac{\sigma_{i}^{l}}{h_{i}} \left(v-P_{h}^{l-1} v, v\right)_i+R_i (v_xV)\\
                           = a_{h_i}^{SV}\left(v, v^*\right) + \sum^k_{l=0} \frac{\sigma_{i}^{l}}{h_{i}} \Vert v-P_{h}^{l-1} v\Vert_{0,\tau_i}^2 + R_i (v_x (\displaystyle\sum^k_{l=0}\frac{\sigma_{i}^{l}}{h_{i}} D_x^{-1}v)). 
              \end{equation}
              % where $V$ has defined in Theorem 3.1. 
              Noticing that $v_xD_x^{-1}v|_{\tau_i}\in\mathcal P_{2k}$ for any $v\in\mathcal U_h$, we have 
              $R_i (v_x D_x^{-1}v)=0$ for OF-RRSV. 
              As for OF-LSV,  by using the error of Gauss-Legendre quadrature, there exists a  $\xi_i\in\tau_i$ such that 
              %due to $v\in \mathcal{P}_k(I_i)$, $v_xD_x^{-1}v$ is a polynomial of degree $2k$. By form of the residual Gauss–Legendre quadrature,
              \begin{equation}\label{mid3}
                  R_i(v_x D_x^{-1}v) = c_l^k (\frac{h_{i}} 2)^{2k+1}\partial_x^{2k} (v_xD_x^{-1}v)^{(2k)}(\xi_i)
                  =c_l^k  (\frac{h_{i}} 2)^{2k} C_{2k}^{k-1}(\partial_x^k v(\xi_i))^2 \geq 0, 
              \end{equation}
              where $c_l^k = \frac{2^{2k+1}(k!)^4}{(2k+1)[(2k)!]^3}$.   Then we conclude from \eqref{integelerror} and the inverse inequality that 
              \begin{equation}\label{equi:1}
                 (v,v)\le  (v,v^*)\lesssim (v,v),\ \ \forall v\in\mathcal U_h. 
              \end{equation}
                On the other hand,  substituting  \eqref{mid3} into \eqref{mid2} and summing up all $i$ from 
              $1$ to $N$ gives 
              \begin{equation}\label{LSVstability}
                          a_{h}\left(v, v^*\right)
                          %& = a_{h_i}^{SV}\left(v, v^*\right) + \sum^k_{l=0} \frac{\sigma_{i}^{l}}{h_{i}} \left(v-P_{h}^{l-1} v, v\right)_i+R_i (v_xV)\\
                           \ge a_{h}^{SV}\left(v, v^*\right) + \sum_{i=1}^N\sum^k_{l=0} \frac{\sigma_{i}^{l}}{h_{i}} \Vert v-P_{h}^{l-1} v\Vert_{0,{\tau}_i}^2.
               \end{equation}
                Note that (see, e.g., \cite{SV-Cao}) 
              \begin{equation}\label{LSVstability:0}
                  a_{h}^{SV}\left(v, v^*\right)=(v_t,v^*)+\frac{1}{2}\sum_{i=1}^N \left.\llbracket v \rrbracket\right|^2_{i+\frac 12} +
              \frac{1}{2}(v_{N+\frac{1}{2}}^{-})^2-\frac 12 (v_{\frac{1}{2}}^{-})^2.
              \end{equation}
              By  taking 
               $v = u_h$ in \eqref{LSVstability} and using \eqref{LSVstability:0} and the $L^2$ equivalence \eqref{equi:1}, then  \eqref{stabilityOFLSV} 
               follows. 
              \end{proof}

               \subsection{Optimal error estimates}
               %In contrast to standard SV method, only the extra inner product and numerical quadrature residual caused by the damping term need to be estimated. 
               We begin with the introduction of a special Lagrange interpolation. For any 
               $v \in \mathcal{H}_h$, let $v_I\in\mathcal U_h$ be 
                the Lagrange interpolation of $v$ satisfying the  conditions  
               \begin{equation*}
                   v_I\left(x_{i, j}\right)=v\left(x_{i, j}\right), \quad 1\le i \le N, 1\le j\le k+1. 
               \end{equation*}
                The standard approximation theory gives us 
               \begin{equation*}
               \left\|v-v_I\right\|_{0} \lesssim h^{k+1}\|v\|_{k+1}.
               \end{equation*}
               The optimal error estimate for both OF-LSV and OF-RRSV methods is  presented below. 
               \begin{theorem}\label{optimalanalisis}
                   Assume $u(\cdot,t) \in H^{k+2}([a,b])$ is the exact solution of \eqref{1dlinear} and $u_h$  is the solution of \eqref{Gar}  with the initial solution  chosen as $ u_h(x, 0)=(u_0)_I(x)$.     Then 
                   \begin{equation}\label{opti-OFLSV}
                       \left\|(u-u_h)(\cdot, t)\right\|_0 \leq C h^{k+1},
                       \end{equation}
                       where C is a constant, depending on $u$ and its derives up to $(k+2)$-$th$ order.
               \end{theorem}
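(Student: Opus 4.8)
The plan is to follow the energy method for the Petrov--Galerkin formulation \eqref{Gar}, splitting the error as $u-u_h=\eta+\xi$ with $\eta=u-u_I$ and $\xi=u_I-u_h\in\mathcal U_h$. Since the interpolation estimate $\|\eta\|_0\lesssim h^{k+1}\|u\|_{k+1}$ is already available, it suffices to prove $\|\xi(\cdot,t)\|_0\lesssim h^{k+1}$. Because $u_h(\cdot,0)=(u_0)_I$ we have $\xi(\cdot,0)=0$, which will furnish the zero initial datum for a Gronwall argument.

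First I would set up the error equation. As $u$ solves \eqref{1dlinear} and is smooth, on each control volume $\int_{x_{i,j}}^{x_{i,j+1}}(u_t+u_x)\,dx=0$, so $a_h^{SV}(u,w^*)=0$ for every $w^*\in\mathcal V_h$; combined with $a_h(u_h,w^*)=0$, the linearity of $a_h(\cdot,w^*)$ (for frozen damping coefficients) and the representation \eqref{OFSV-gar} give, upon choosing $w^*=\mathcal F\xi$ so that the associated $\mathcal U_h$-function is $\xi$ itself,
\begin{equation*}
a_h(\xi,\mathcal F\xi)=\sum_{i=1}^N\Big[\sum_{l=0}^k\frac{\sigma_i^l}{h_i}\big(u-P_h^{l-1}u,\xi\big)_i+R_i\Big(\xi_x\sum_{l=0}^k\frac{\sigma_i^l}{h_i}D_x^{-1}\big(u-P_h^{l-1}u\big)\Big)\Big]-a_h(\eta,\mathcal F\xi).
\end{equation*}

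For the left-hand side I would reuse the energy computation from the $L^2$-stability analysis: taking $v=\xi$ in \eqref{mid2}, invoking \eqref{integelerror} for the time-derivative term $(\xi_t,\mathcal F\xi)$, and discarding the nonnegative quadrature contribution \eqref{mid3} (for OF-LSV) together with the nonnegative upwind interface and boundary terms in \eqref{LSVstability:0}, yields
\begin{equation*}
a_h(\xi,\mathcal F\xi)\ge\frac12\frac{d}{dt}(\xi,\mathcal F\xi)+\sum_{i=1}^N\sum_{l=0}^k\frac{\sigma_i^l}{h_i}\big\|\xi-P_h^{l-1}\xi\big\|_{0,\tau_i}^2,
\end{equation*}
where $(\xi,\mathcal F\xi)$ is equivalent to $\|\xi\|_0^2$ by \eqref{equi:1}. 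The decisive algebraic step for the right-hand side is the projection orthogonality $(u-P_h^{l-1}u,\xi)_i=(u-P_h^{l-1}u,\xi-P_h^{l-1}\xi)_i$; a weighted Cauchy--Schwarz and Young inequality then split this into a piece $\tfrac14\frac{\sigma_i^l}{h_i}\|\xi-P_h^{l-1}\xi\|_{0,\tau_i}^2$, absorbed by the coercive damping sum on the left, and a consistency remainder $\frac{\sigma_i^l}{h_i}\|u-P_h^{l-1}u\|_{0,\tau_i}^2$. The term $a_h(\eta,\mathcal F\xi)$ is handled as in the standard SV analysis of \cite{SV-Cao}: the interpolation nodes $x_{i,j}$ coincide with the quadrature points, so $\eta$ vanishes there and the flux and jump parts of $a_h^{SV}(\eta,\mathcal F\xi)$ drop out, leaving $(\eta_t,\mathcal F\xi)\lesssim\|\eta_t\|_0\|\xi\|_0\lesssim h^{k+1}\|\xi\|_0$ (here the hypothesis $u\in H^{k+2}$ is used), while the damping part of $a_h(\eta,\mathcal F\xi)$ is treated by the same orthogonality trick. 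Collecting terms, using \eqref{equi:1}, and applying Gronwall's inequality with $\xi(\cdot,0)=0$ then gives \eqref{opti-OFLSV}.

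The hard part will be showing that the surviving damping consistency contribution $\sum_{i}\sum_{l}\frac{\sigma_i^l}{h_i}\|u-P_h^{l-1}u\|_{0,\tau_i}^2$, together with the quadrature residuals $R_i(\cdots)$, is of the optimal order $h^{2k+2}$ rather than degrading the rate. Since $\|u-P_h^{l-1}u\|_{0,\tau_i}\sim h^l$, a merely bounded $\sigma_i^l$ would only yield $O(h)$, so the estimate genuinely relies on the smallness of the damping coefficients in smooth regions. Writing $\llbracket\partial_x^l u_h\rrbracket=\llbracket\partial_x^l(u_h-u)\rrbracket$ in \eqref{xigema_ij} (as $u$ is smooth), the needed sharp bounds on these interface jumps of the error, exploiting the explicit $h^l/l!$ weight in $\sigma_i^l$, couple $\sigma_i^l$ back to the very quantity being estimated; I expect to close this either through an a priori bound on $\sigma_i^l$ or a bootstrap/continuation argument, in the spirit of the OFDG analysis in \cite{OFDG-1}.
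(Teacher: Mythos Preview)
Your overall architecture (error splitting $u-u_h=\eta+\xi$, energy estimate for $\xi$, Gronwall with $\xi(\cdot,0)=0$) matches the paper, and you correctly isolate the damping consistency term as the crux. The gap is in how you propose to treat it. Your orthogonality--Young route leaves you with $\sum_{i,l}\frac{\sigma_i^l}{h_i}\|u-P_h^{l-1}u\|_{0,\tau_i}^2$, and this does \emph{not} close at the optimal rate by the mechanism you sketch. Indeed, writing $\sigma_i^l\lesssim h^l\big(\llbracket\partial_x^l\xi\rrbracket+\llbracket\partial_x^l\eta\rrbracket\big)$ and using the inverse/trace bound $h^l\llbracket\partial_x^l\xi\rrbracket\lesssim h^{-1/2}\|\xi\|_{0,\mathrm{loc}}$ together with $\|u-P_h^{l-1}u\|_{0,\tau_i}^2\sim h_i^{2\max(1,l)+1}$, the $l=0$ contribution after summation is only of size $h\|\xi\|_0+h^{k+2}$. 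One more Young gives $\|\xi\|_0^2+h^2$, which yields merely $\|\xi\|_0\lesssim h$. A bootstrap would have to manufacture an extra $h^k$ from nowhere; the OFDG analysis you cite does not use such a bootstrap for this step either.

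The paper avoids this entirely by \emph{not} splitting via Young. It keeps the product $\frac{\sigma_i^l}{h_i}(u_I-P_h^{l-1}u_I,\xi^*)_i$ intact, bounds the inner product by Cauchy--Schwarz as $\lesssim h^{\max(1,l)+1/2}\|\xi^*\|_{0,\tau_i}$, and bounds $\sigma_i^l$ by \eqref{xigemaest}. The $\xi$-part of $\sigma_i^l$ then supplies a \emph{second} factor of $\|\xi\|$ (via $h^l\llbracket\partial_x^l\xi\rrbracket\lesssim h^{-1/2}\|\xi\|_{0,\mathrm{loc}}$), and the $h$-powers cancel exactly to give $\|\xi\|_0\|\xi^*\|_0$; the $\eta$-part of $\sigma_i^l$ contributes $h^{k+1}\|\xi^*\|_0$. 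Hence
\[
(\xi_t,\xi^*)\ \lesssim\ \|\xi\|_0\|\xi^*\|_0+h^{k+1}\|\xi^*\|_0,
\]
and Gronwall closes immediately---no a priori bound on $\sigma_i^l$, no bootstrap. In short: drop the absorption trick for the damping consistency term and exploit instead that $\sigma_i^l$ already carries one factor of the error, so that a straight Cauchy--Schwarz on the remaining inner product yields the quadratic-in-$\xi$ structure Gronwall needs.
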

            \begin{proof}
                   Let 
               \[
                 e=u-u_h,\ \  \xi= u_I-u_h, \eta=u_I-u. 
               \]
                   Since the exact solution $u$  satisfies $a_h^{SV}(u,v^*)=0,\ v\in \mathcal U_h$, there holds  the following orthogonality    % Or equivalently, 
               %    \begin{equation}\label{exact}
               %       a_{h}^{SV}(e,v_h^*) = 0,\quad v_h\in \mathcal{U}_h
               %    \end{equation}
               %    Subtracting Eq.\eqref{OFSV-SV} from Eq.\eqref{exact}, the following equation can be obtained
                   \begin{equation*}
                       a_{h}^{SV}(e,v^*) = \sum_{i=1}^N\sum^k_{l=0} \frac{\sigma_{i}^{l}}{h_{i}} \left(u_h-P_{h}^{l-1} u_h, v^* \right)_{\tau_{i}},\ \ v\in \mathcal{U}_h. 
                   \end{equation*}
                   On the other hand,  we choose $v=\xi$ in \eqref{LSVstability} and use \eqref{LSVstability:0} and the  above orthogonality, and then obtain 
                   \begin{equation}\label{errorequation1}
                    (\xi_t,\xi^*) + \sum^N_{i=1} \sum^k_{l=0} \frac{\sigma_{i}^{l}}{h_{i}} \Vert \xi-P_{h}^{l-1} \xi \Vert_{0,\tau_i} ^2 \le   
                    a_{h}^{SV}(\eta,\xi^*) + \sum_{i=1}^N\sum^k_{l=0} \frac{\sigma_{i}^{l}}{h_{i}} \left(u_I-P_{h}^{l-1} u_I, \xi^* \right)_i. 
                    %\mathcal{H}_1 + \mathcal{H}_2,
                \end{equation}
                We next estimate the two terms appeared in the right hand side of \eqref{errorequation1}.
                As for $a_{h}^{SV}(\eta,\xi^*)$,   there holds the conclusion in \cite{SV-Cao} 
                \begin{equation}\label{LSVoptimate}
                 | a_{h}^{SV}(\eta,\xi^*)  |\lesssim h^{k+1}\|\xi^*\|_0.
                \end{equation}
                 Using the  Cauchy-Schwarz inequality and the approximation property of $u_I$,  we get 
                 \[
                    (u_I-P_{h}^{l-1} u_I, \xi^* )_i\le \|u_I-P_{h}^{l-1} u_I\|_{0,{\tau}_i}\|\xi\|_{0,\tau_i}\lesssim h^{\max (1, l)+\frac{1}{2}}\|u_I\|_{l,\infty,\tau_i}\|\xi\|_{0,\tau_i}.
                 \]
                 Recalling the definition of $\sigma_i^l$ in \eqref{xigema_ij},  there holds 
                 \begin{equation}\label{xigemaest}
                   \begin{aligned}
                   \left(\sigma_i^l\right)^2 &=\frac{4(2 l+1)^2}{(2 k-1)^2} \frac{h^{2 l}}{(l !)^2}\left(\llbracket \partial_x^l\left(u_h-u\right) \rrbracket_{i-\frac{1}{2}}^2+\llbracket \partial_x^l\left(u_h-u\right) \rrbracket_{i+\frac{1}{2}}^2\right) \\
                   & \lesssim h^{2 l}\left(\llbracket \partial_x^l \xi \rrbracket_{i-\frac{1}{2}}^2+\llbracket \partial_x^l \xi \rrbracket_{i+\frac{1}{2}}^2\right)+h^{2 l}\left(\llbracket \partial_x^l \eta \rrbracket_{i-\frac{1}{2}}^2+\llbracket \partial_x^l \eta \rrbracket_{i+\frac{1}{2}}^2\right) .
                   \end{aligned}
               \end{equation}
                Combining the last two inequalities yields 
                
                \begin{equation}\label{errorh2}
                  \begin{aligned}
                     &\left|\sum_{i=1}^N\sum^k_{l=0} \frac{\sigma_{i}^{l}}{h_{i}} \left(u_I-P_{h}^{l-1} u_I \xi^* \right)_i \right|\\ 
                     %&\leq \sum_{i=1}^N \sum_{l=0}^k \frac{\sigma_i^l}{h_i}\left\|u_I -P_h^{l-1}\left(u_I\right)\right\|_{0,L^2\left(I_i\right)}\|\xi^*\|_{0,L^2\left(I_i\right)}\\
                     \lesssim  &\sum_{i=1}^N \sum_{l=0}^k h_i^{\max \left(l-\frac{1}{2}, \frac{1}{2}\right)+l}\left(\llbracket \partial_x^l \xi \rrbracket_{i-\frac{1}{2}}^2+\llbracket \partial_x^l \eta \rrbracket_{i-\frac{1}{2}}^2\right)^{\frac{1}{2}}\|\xi^*\|_{0,\tau_i}\\
                   %   & + \sum_{i=1}^N \sum_{l=0}^k h_i^{\max \left(l-\frac{1}{2}, \frac{1}{2}\right)+l}\left(\llbracket \partial_x^l \eta \rrbracket_{i-\frac{1}{2}}^2+\llbracket \partial_x^l \eta \rrbracket_{i+\frac{1}{2}}^2\right)^{\frac{1}{2}}\|\xi^*\|_{0,L^2\left(I_i\right)}\\
                     \lesssim  &\|\xi\|_0\|\xi^*\|_0+h^{k+1}\|\xi^*\|_0. 
                  \end{aligned}
              \end{equation}
              Substituting \eqref{LSVoptimate} and \eqref{errorh2} into \eqref{errorequation1} leads to 
              
              \begin{equation*}
                (\xi_t,\xi^*) \lesssim  \|\xi\|_0\|\xi^*\|_0+h^{k+1}\|\xi^*\|_0.
                 \end{equation*}
                 By using the $L^2$ equivalence in \eqref{equi:1} and the Gronwall's inequality, we have
             \begin{equation}\label{xi}
                 \|\xi(\cdot, t)\|_0 \lesssim\|\xi(\cdot, 0)\|_0+h^{k+1}.
             \end{equation}
             Then \eqref{opti-OFLSV} follows from the approximation property of $u_I$. 
            \end{proof}
             
             \subsection{Superconvergence}

             Following the argument in \cite{SV-Cao}, we adopt the idea of correction function to study the superconvergence 
             property of the OF-LSV and OF-RRSV.  
             %In this part of the article,  superconvergence phenomenon of the new SV method is revealed. The key process  is constructing a suitable correction function to correct the error between the exact solution and its special projection $u_I.$  
             
              We begin with the construction of the  correction function $\omega \in\mathcal U_h$, which  is defined in each 
              element $\tau_i, 1\le i\le N$  by 
             \begin{equation}\label{omega}
                 \left\{\begin{array}{l}
                 \left(\omega, v_x\right)_i = \left(\partial_t \eta, v^*\right)_i + \displaystyle\sum^k_{l=0} \frac{\sigma_{i}^{l}}{h_{i}} \left(u_I-P_{h}^{l-1} u_I, v^* \right)_i, \quad \forall v \in \mathcal{P}_{-}(\tau_i), \\
                 \omega(x_{i+\frac{1}{2}}^{-},t)=0,
                 \end{array}\right.
             \end{equation}
             where $\mathcal{P}_{-}$ is the orthogonal complement of $\mathcal{P}^0$ in $\mathcal{P}^k$, namely, $\mathcal{P}^k= \mathcal{P}^0\bigoplus\mathcal{P}_{-}$.
             %The properties of the proposed correction function $\omega$ are vital for  proving superconvergence; see Lemma $3.1$ below.
             \begin{lemma}
                 The correction function $\omega$ defined by \eqref{omega} is uniquely determined. Moreover, if $u \in W^{k+3,\infty}([a,b])$, then      \begin{equation}\label{omegaestimate}
                     \|\partial_t^r\omega\|_0\leq C h^{k+2}, \quad r=0,1.
                 \end{equation}
             where C is a constant, depending on the exact solution $u$ and its derivative up to $(k+2+r)$-$th$.
             \end{lemma}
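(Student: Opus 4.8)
The plan is to treat the two assertions---unique solvability of \eqref{omega} and the bound \eqref{omegaestimate}---separately, working one element $\tau_i$ at a time and summing at the end. First I would settle uniqueness. Since $\omega|_{\tau_i}\in\mathcal P^k$ carries $k+1$ degrees of freedom, while \eqref{omega} imposes $k$ linear conditions (as $v$ runs over the $k$-dimensional space $\mathcal P_-$) together with the single pointwise constraint $\omega(x_{i+\frac12}^-)=0$, it suffices to show the associated homogeneous system is nonsingular. The derivative map $v\mapsto v_x$ is a bijection of $\mathcal P_-$ onto $\mathcal P^{k-1}$: it is injective because its kernel (the constants) meets $\mathcal P_-$ only in $0$, and the two spaces have the same dimension $k$. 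Hence with vanishing data the first line of \eqref{omega} reads $(\omega,\phi)_i=0$ for every $\phi\in\mathcal P^{k-1}$, which forces $\omega|_{\tau_i}=c\,L_{i,k}$; the boundary condition together with $L_{i,k}(x_{i+\frac12})=1$ then yields $c=0$. This same splitting $\omega|_{\tau_i}=\omega^{k-1}+c_{i,k}L_{i,k}$ with $\omega^{k-1}:=P_h^{k-1}\omega$ will organize the quantitative estimate.

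For the bound with $r=0$ I would exploit that $L_{i,k}\perp\mathcal P^{k-1}$, so that $\|\omega^{k-1}\|_{0,\tau_i}=\sup_{0\ne\phi\in\mathcal P^{k-1}}(\omega,\phi)_i/\|\phi\|_{0,\tau_i}$. For each $\phi$ I pick the unique $v=v_\phi\in\mathcal P_-$ with $(v_\phi)_x=\phi$; then $(\omega,\phi)_i$ equals the right-hand side of \eqref{omega}. The extra power of $h$ (from the generic $h^{k+1}$ up to $h^{k+2}$) is produced by the Poincar\'e--Wirtinger bound $\|v_\phi\|_{0,\tau_i}\lesssim h\|\phi\|_{0,\tau_i}$, which holds exactly because $v_\phi$ has zero mean. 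It then remains to bound the right-hand side of \eqref{omega} by $h^{k+1}\|v_\phi\|_{0,\tau_i}$. For the first term I use the boundedness $\|v^*\|_{0,\tau_i}\lesssim\|v\|_{0,\tau_i}$ of $\mathcal F$ (a finite-dimensional norm equivalence in the spirit of \eqref{equi:1}), the commutation $\partial_t u_I=(\partial_t u)_I$, and $\|\partial_t\eta\|_{0,\tau_i}\lesssim h^{k+1}|\partial_t u|_{k+1,\tau_i}$, where the equation $\partial_t u=-\partial_x u$ converts the time derivative into the $(k+2)$-th spatial derivative of $u$ and thereby fixes the regularity budget.

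The damping contribution is where the main obstacle lies. Each summand is $\tfrac{\sigma_i^l}{h_i}(u_I-P_h^{l-1}u_I,v^*)_i$, and the factor $\|u_I-P_h^{l-1}u_I\|_{0,\tau_i}\lesssim h^{\max(1,l)+\frac12}$ is the projection estimate already used in Theorem \ref{optimalanalisis}. The difficulty is that $\sigma_i^l$ depends on the numerical solution through vertex jumps; I would control it via \eqref{xigemaest} combined with the already-established optimal estimate $\|\xi\|_0\lesssim h^{k+1}$ and an inverse trace inequality, which yields $\sigma_i^l\lesssim h^{k+\frac12}$. A short power count then shows every summand is $\lesssim h^{k+1}\|v\|_{0,\tau_i}$, so that $\|\omega^{k-1}\|_{0,\tau_i}\lesssim h^{k+2}$. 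Recovering the leading coefficient from the boundary condition, namely $c_{i,k}=-\omega^{k-1}(x_{i+\frac12})$ with $|c_{i,k}|\lesssim h^{-\frac12}\|\omega^{k-1}\|_{0,\tau_i}$ and $\|L_{i,k}\|_{0,\tau_i}\sim h^{\frac12}$, gives $\|\omega\|_{0,\tau_i}\lesssim h^{k+2}$; squaring and summing over $i$ produces \eqref{omegaestimate} for $r=0$.

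For $r=1$ I would differentiate \eqref{omega} in $t$ (the mesh, nodes, and spaces being stationary), so that $\partial_t\omega$ solves the same problem with $\partial_t\eta$ replaced by $\partial_t^2\eta$ and the damping data differentiated. The estimate $\|\partial_t^2\eta\|_{0,\tau_i}\lesssim h^{k+1}|\partial_t^2 u|_{k+1,\tau_i}=h^{k+1}|u|_{k+3,\tau_i}$ accounts for the hypothesis $u\in W^{k+3,\infty}$ and the dependence on the $(k+3)$-th derivative. The genuinely delicate new piece---which I expect to be the hardest step---is $\partial_t\sigma_i^l$, since it involves time derivatives of the vertex jumps of $u_h$; I would handle it with a time-differentiated version of the error estimate bounding $\partial_t\xi$, after which the argument closes exactly as in the $r=0$ case.
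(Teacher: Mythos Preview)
Your overall plan mirrors the paper's: the paper also expands $\omega|_{\tau_i}$ in the Legendre basis, tests \eqref{omega} with the antiderivatives $\phi_{i,m+1}=\tfrac{2}{h_i}\int_{x_{i-1/2}}^{x} L_{i,m}\,dx$ to isolate each coefficient $c_{i,m}$, and recovers the top coefficient from the boundary condition $\omega(x_{i+1/2}^-)=0$. Your abstract duality/Poincar\'e packaging is a clean reformulation of the same mechanism. However, there is a half-power gap in your damping estimate that prevents the argument from closing as written. From the global bound $\|\xi\|_0\lesssim h^{k+1}$ and an inverse trace inequality you correctly obtain only $\sigma_i^l\lesssim h^{k+1/2}$ \emph{uniformly in $i$}; the paper instead asserts the sharper pointwise bound $\sigma_i^l\lesssim h^{k+1}$. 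With your weaker bound your local estimate is indeed $\|\omega\|_{0,\tau_i}\lesssim h^{k+2}$, but ``squaring and summing over $i$'' then yields $\|\omega\|_0^2\lesssim N\,h^{2(k+2)}\sim h^{2k+3}$, i.e.\ only $\|\omega\|_0\lesssim h^{k+3/2}$, not the claimed $h^{k+2}$.

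To repair this you have two options. Either upgrade to $\sigma_i^l\lesssim h^{k+1}$---which would require an $L^\infty$, not merely $L^2$, control of $\xi$---so that the local bound improves to $\|\omega\|_{0,\tau_i}\lesssim h^{k+5/2}$ and the sum behaves; this is effectively the paper's route, since its estimate is on the \emph{coefficients} $|c_{i,m}|\lesssim h^{k+2}$ and the factor $\|L_{i,m}\|_{0,\tau_i}^2\sim h_i$ then absorbs the summation. Or, more in keeping with the tools you have invoked, do not replace $\sigma_i^l$ by its uniform bound at the local stage: carry it through and use the $\ell^2$ control $\sum_i(\sigma_i^l)^2\lesssim h^{2l-1}\|\xi\|_0^2 + h^{2k+1}\lesssim h^{2k+1}$, which does follow from \eqref{xigemaest}, \eqref{xi}, and inverse estimates, to recover $\|\omega\|_0\lesssim h^{k+2}$ after summing. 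The same issue and the same fixes apply to the $r=1$ case, where your identification of $\partial_t\sigma_i^l$ as the delicate new ingredient is apt.
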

             \begin{proof}
                 Since $\omega|_{\tau_i}\in\mathcal{P}^k$,  we suppose that
              \begin{equation*}
                     \omega|_{\tau_i}=\sum_{m=0}^k c_{i, m}(t) L_{i, m}(x),
                 \end{equation*}
             where $L_{i,m}$ denotes the Legendre polynomial of degree $m$ in  $\tau_i$.  
              Denoting $\phi_{i,m+1} = \frac {2} {h_i} \int_{x_{i-\frac 1 2}}^x L_{i,m} dx$
              and  choosing $v=\phi_{i,m+1}, m=0,\cdots, k-1 $ in \eqref{omega} yields 
                 \begin{equation*}
                  \frac{2}{h_i} \left(\omega, L_{i, m}^*\right)_i= (\partial_t \eta, \phi_{i, m+1}^*)_i+\displaystyle\sum^k_{l=0} \frac{\sigma_{i}^{l}}{h_{i}} \left(u_I-P_{h}^{l-1} u_I, \phi_{i, m+1}^* \right)_i:={\cal H}_1+{\cal H}_2. 
                 \end{equation*}
             %and  the last step is offered by the defination of $\omega$ in  Eq.\eqref{omega}. Here $W_1,~W_2$ are given as
             %$$W_1 = (\partial_t \eta, \phi_{i, m+1}^*)_i,\quad W_2 = \displaystyle\sum^k_{l=0} \frac{\sigma_{i}^{l}}{h_{i}} \left(u_I-P_{h}^{l-1} u_I, \phi_{i, m+1}^* \right)_i,$$
             %with  denoting the Lobatto polynomial of degree $m + 1$.
             %
             As for ${\cal H}_1$,  it was proved in  \cite{SV-Cao} that  
             \begin{equation}\label{H1}
                {\cal H}_1 
                %\lesssim h^{m^{\prime}}\|u\|_{m^{\prime}, \infty, {\tau}_i}\|\phi_{i,m+1}^*\|_{0, \infty, {\tau}_i}
                \lesssim h^{m^{\prime}}\|u\|_{m^{\prime}, \infty, {\tau}_i},\quad m^{\prime} = \max  (2k+1-m, k+2).
             \end{equation}
               To estimate ${\cal H}_2$, we have, from \eqref{integelerror}, 
             \begin{equation}\label{H2}
                 {\cal H}_2=\displaystyle\sum^k_{l=0} \frac{\sigma_{i}^{l}}{h_{i}} ((u_I-P_{h}^{l-1} u_I, \phi_{i, m+1})_i + \frac{2}{h_i}R_i(L_{i,m}V_l)),\ {\rm with}\ V_l= D_x^{-1}(u_I-P_{h}^{l-1} u_I). 
             \end{equation}
              % with $V_l= D_x^{-1}(u_I-P_{h}^{l-1} u_I)$.
             Using the fact that right Radau  quadrature is exact for $2k$, we have $R_i(L_{i,m}V_l)=0$ for all $0\le m\le k-1$ for OF-RRSV. 
             As for OF-LSV, we use the residual of Gauss-Legendre quadrature to derive 
             %
             %Since the Gauss-Legendre quadrature is exact for $2k-1$ degree polynomials, then for $l\leq k-1$, $E^l_i=0$. As for $l=k$, according to  the residual of Gauss-Legendre quadrature
             \begin{equation*}
                 \begin{aligned}
                R_i(L_{i,m}V_l)=R_i(L_{i,m} D_x^{-1} u_I)=
                 (\frac{h_i}{2})^{2k+1} c_l^k\partial_x^{2k}(L_{i,m}D_x^{-1} u_I)(\xi), \xi\in \tau_i,
                 \end{aligned}
             \end{equation*}
                 which yields, together with the inverse inequality that 
              \begin{equation*}
                        \frac{2}{h_i} |R_i(L_{i,m}V_l)|  \lesssim (h_i)^{2k}\|\partial _x^{k-1}L_{i,m}\|_{0,\infty,{\tau}_i} \|\partial_x^ku_I\|_{0,\infty,\tau_i}\\
                          \lesssim h_i^{k+1} \|u\|_{k,\infty,\tau_i}. 
             \end{equation*}
              As a direct consequence of the Cauchy-Schwarz inequality and  the approximation property of $P_h^l$,  
             \begin{equation*}
                     \left\lvert \left(u_I-P_{h}^{l-1} u_I, \phi_{i, m+1} \right) _i\right\rvert 
                     %&\lesssim \left\lVert u_I-P^{l-1}_h u_I\right\rVert_{0,I_i} \cdot \left\lVert v\right\rVert_{0,I_i} \\
                     \lesssim h_i^{\max\{1,l\}+1} \|u\|_{l,\infty,\tau_i}. 
             \end{equation*}
             In light of \eqref{xigemaest} and the estiamte of  $\xi$ in \eqref{xi},  we have 
             \begin{equation*}\label{xigemafinalest}
                 \left|\sigma_i^l\right|\lesssim  h^{k+1}\|u\|_{k+2,\infty, \tau_{i-1}\cup \tau_{i}\cup \tau_{i+1}}. 
             \end{equation*}
             Substituting the last three inequality into the formulation of ${\cal H}_2$ in \eqref{H2} yields 
             \begin{equation}\label{W2}
                 \left\lvert  {\cal H}_2 \right\rvert\lesssim \sum_{l=0}^k \left|\sigma_i^l\right|  \cdot h_i^{-1}\cdot ( h_i^{\max\{1,l\}+1} + h_i^{k+1}) \lesssim h_i^{k+2}\|u\|_{k+2,\infty, \tau_{i-1}\cup \tau_{i}\cup \tau_{i+1}}.
             \end{equation}
             Combining  the estimates of ${\cal H}_i, 1\le i\le 2$ in \eqref{H1} and \eqref{W2}, we have 
         %     for all $m\leq k-1$,
             $$
             \begin{aligned}
             \left\lvert c_{i,m}\right\rvert &\lesssim \frac{2}{h_i} |\left(\omega, L_{i, m}^*\right)_i | \lesssim  h_i^{k+2}\|u\|_{k+2,\infty,\tau_{i-1}\cup \tau_{i}\cup \tau_{i+1}},\ \ m\le k-1.
             \end{aligned}
             $$
             As for $m=k$, the identity $\omega(x_{i+\frac 1 2})^-=0$ implies that
             $$\left\lvert c_{i,m}\right\rvert  = \left\lvert \sum_{n=0}^{k-1}c_{i,n}\right\rvert  \lesssim h_i^{k+ 2}\|u\|_{k+2,\infty,\tau_{i-1}\cup \tau_{i}\cup \tau_{i+1}}.$$
             Consequently,
             $$
             \left\lVert \omega \right\rVert ^2_0 \lesssim \sum_{i=1}^N h_i \sum_{m=0}^k \left\lvert c_{i,m}\right\rvert ^2 \lesssim h^{2(k+2)}\|u\|_{k+2,\infty}.
             $$
             % {\color{red} Comment: Please double check the above inequality, the hidden constant should be specified. !!! 
             % Note that 
             % \[
             %   \left\lVert \omega \right\rVert ^2_0 \lesssim \sum_{i=1}^N h_i \sum_{m=0}^k \left\lvert c_{i,m}\right\rvert ^2  \lesssim 
             %   \sum_{i=1}^N  h_i ^{2k+2} =O(h^{2k-3}) !!!
             % \]
             % }
             The \eqref{omegaestimate} is valid for $r=0$. 
             Taking time derivative in both sides of \eqref{omega} and following the same argument,  we can prove that  \eqref{omegaestimate}
             also holds true for $r=1$.  %The proof is complete. 
       \end{proof}
             %Ultimately, the preparation for the certificate of superconvergence result is well done. In the rest of this aritical, the following notatios are adoptes  for convenience:

             \begin{theorem}
                 Let $u\in W^{k+3,\infty}(\Omega)$ be the solution of \eqref{1dlinear}, $u_h$ be the solution of \eqref{Gar}  with the initial solution  chosen as $ u_h(x, 0)=\widetilde{(u_0)}_I(x)$.  Then  for both OF-LSV and OF-RRSV, 
              \begin{equation}\label{eq:2}
                \| (u_I - u_h)(\cdot, t)\|_0 \leq C h^{k+2},
                 \end{equation}
                 where C is a constant, depending on the exact solution $u$ and its $(k+3)$-$th$ derivative.
              \end{theorem}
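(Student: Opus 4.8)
The plan is to mimic the correction-function/energy argument behind Theorem~\ref{optimalanalisis}, but now to subtract the correction function $\omega$ of \eqref{omega} from $\xi=u_I-u_h$ so that the leading $O(h^{k+1})$ consistency terms cancel and only an $O(h^{k+2})$ residual drives the energy. First I would record the error equation for $\xi$. Since $a_h(u_h,v^*)=0$, $a_h^{SV}(u,v^*)=0$, and $u_I=u+\eta$, the splitting $a_h=a_h^{SV}+\sum_{l}\frac{\sigma_i^l}{h_i}(\cdot-P_h^{l-1}\cdot,\cdot)$ immediately yields
\begin{equation*}
a_h(\xi,v^*)=a_h^{SV}(\eta,v^*)+\sum_{i=1}^N\sum_{l=0}^k\frac{\sigma_i^l}{h_i}\big(u_I-P_h^{l-1}u_I,v^*\big)_i,\qquad \forall v\in\mathcal{U}_h.
\end{equation*}
The term $a_h^{SV}(\eta,v^*)$ is only $O(h^{k+1})$ by \eqref{LSVoptimate}, and the damping sum is of the type already met in \eqref{errorh2}; these are exactly the two pieces the correction function must absorb.

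Then I would set $\zeta=\xi-\omega$ and choose the modified initial interpolation $\widetilde{(u_0)}_I=(u_0)_I-\omega(\cdot,0)$, so that $\zeta(\cdot,0)=0$. The purpose of the defining relation \eqref{omega} is to cancel the two delicate terms above. Concretely, I would rewrite $a_h^{SV}(\omega,v^*)$ by means of the transport (summation-by-parts) structure of $a_h^{SV}$ together with the quadrature identity \eqref{integelerror}, exploiting that $\eta$ and $\partial_t\eta$ vanish at all interpolation nodes $x_{i,j}$ and that $\omega(x_{i+1/2}^-)=0$; the inner product $(\omega,v_x)$ produced this way is matched against $(\partial_t\eta,v^*)$ and the damping term through \eqref{omega}, with the constant (cell-average) modes dropping out by the orthogonality $\int_{\tau_i}(u_I-P_h^{l-1}u_I)\,dx=0$. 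What survives are quadrature remainders $R_i(\cdot)$ together with the contributions of $\partial_t\omega$ and of $(\omega-P_h^{l-1}\omega)$; by the exactness of the $k$-point Gauss rule on $\mathcal{P}_{2k-1}$ (OF-LSV) resp.\ the $(k+1)$-point right Radau rule on $\mathcal{P}_{2k}$ (OF-RRSV), by the bound $\|\partial_t^r\omega\|_0\le Ch^{k+2}$ from \eqref{omegaestimate}, and by $|\sigma_i^l|\lesssim h^{k+1}$, each of these is $O(h^{k+2})$. Hence the error equation for $\zeta$ collapses to $a_h(\zeta,\zeta^*)=\mathcal{R}(\zeta^*)$ with $|\mathcal{R}(\zeta^*)|\lesssim h^{k+2}\|\zeta^*\|_0$.

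With this reduction in hand I would run the energy estimate verbatim as in the stability and optimal-error proofs. Taking $v=\zeta$ and combining \eqref{OFSV-gar}/\eqref{mid2} with the identity \eqref{LSVstability:0} isolates $(\zeta_t,\zeta^*)$ plus the nonnegative jump, boundary, and damping contributions; discarding the nonnegative terms gives $(\zeta_t,\zeta^*)\lesssim h^{k+2}\|\zeta^*\|_0$. The $L^2$-equivalence \eqref{equi:1} converts this into $\frac{d}{dt}\|\zeta\|_0^2\lesssim h^{k+2}\|\zeta\|_0$, and Gronwall's inequality together with $\zeta(\cdot,0)=0$ yields $\|\zeta(\cdot,t)\|_0\lesssim h^{k+2}$. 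The triangle inequality $\|u_I-u_h\|_0=\|\xi\|_0\le\|\zeta\|_0+\|\omega\|_0$ and the correction-function estimate $\|\omega\|_0\le Ch^{k+2}$ from \eqref{omegaestimate} then establish \eqref{eq:2}.

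The main obstacle is the middle step: proving rigorously that the correction function cancels the \emph{complete} leading-order consistency error. This is where care is essential, because one must simultaneously (i) convert $a_h^{SV}(\eta-\omega,\zeta^*)$ into the exact bilinear expression $(\omega,\zeta_x)$ appearing in \eqref{omega} through summation by parts and \eqref{integelerror}; (ii) verify that the damping term $\sum_l\frac{\sigma_i^l}{h_i}(u_I-P_h^{l-1}u_I,\cdot)$ is matched, using the constant-mode orthogonality of $P_h^{l-1}$; and (iii) control every surviving quadrature remainder down to order $h^{k+2}$, treating separately the Gauss case (a nonzero but higher-order remainder, as in \eqref{mid3}) and the right Radau case (an exact remainder). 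Aligning the signs and the quadrature bookkeeping so that no $O(h^{k+1})$ term slips through is the crux, and it is precisely here that the argument departs from the damping-free analysis of the classical SV method.
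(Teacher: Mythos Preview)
Your plan is essentially the paper's own argument: correct $\xi$ by $\omega$, show the corrected error equation has an $O(h^{k+2})$ right-hand side, and close with \eqref{equi:1}/Gronwall. The paper's $\xi=u_h-\widetilde u_I$ is exactly your $-\zeta$.

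One point you gloss over, and which the paper treats explicitly, is the handling of piecewise-constant test functions. The defining relation \eqref{omega} for $\omega$ is posed only for $v\in\mathcal{P}_-$, so your cancellation ``$(\omega,\zeta_x)$ matches $(\partial_t\eta,\zeta^*)+\text{damping}$'' is only available on the non-constant part of $\zeta$. For $v_0\in\mathcal{P}^0$ you still have the raw transport residual $a_h^{SV}(u-u_I,v_0^*)=v_0\int_{\tau_i}\partial_t(u-u_I)\,dx$, and getting this to be $O(h^{k+2})$ is \emph{not} a consequence of the node-interpolation property $\eta(x_{i,j})=0$ alone: it is a cell-average superconvergence property of the particular Lagrange interpolant at Gauss/right-Radau nodes. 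The paper handles this by explicitly splitting $v=v_0+v_1$ with $v_0\in\mathcal{P}^0$, $v_1\in\mathcal{P}_-$, using \eqref{omega} on $v_1$, and invoking the external estimate $|a_h^{SV}(\eta,v_0^*)|\lesssim h^{k+2}\|v_0\|_0$ (Theorem~5.4 in \cite{SV-Cao}); your constant-mode orthogonality remark covers only the damping contribution, not this transport term. Once you supply that ingredient, your route and the paper's coincide.
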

             \begin{proof}      Denoting 
             \[
              \widetilde{u}_I = u_I-\omega,\quad e=u-u_h,\quad   \xi  = u_h-\widetilde{u}_I,\quad  \eta  = u-\widetilde{u}_I.
              \] 
               On the one hand,  following the same argument as that in \eqref{errorequation1}, we have 
                \begin{equation}\label{eq:1}
                     ( \xi_t, \xi^*) + \sum^N_{i=1} \sum^k_{l=0} \frac{\sigma_{i}^{l}}{h_{i}} \Vert  \xi-P_{h}^{l-1}  \xi \Vert_0 ^2  \le    
                     a_{h}^{SV}(\eta, \xi^*) + \sum_{i=1}^N\sum^k_{l=0} \frac{\sigma_{i}^{l}}{h_{i}} \left(\tilde u_I-P_{h}^{l-1} \tilde u_I,\xi^* \right)_i. 
                  % &=&  (\partial_t\omega,\tilde \xi^*)_i + \sum_{l=0}^k \frac { \sigma _i^l}{h_i}\left(\omega - P_h^{l-1}\omega,\tilde \xi^*  \right)_i. 
                     %\mathcal{H}_1 + \mathcal{H}_2,
                 \end{equation}
                On the other hand, we have for all $v\in \mathcal{P}_{-}$,   from \eqref{omega} that 
              \begin{eqnarray*}
                          a_{hi}^{SV}(\eta,v^*) + \sum^k_{l=0} \frac{\sigma_{i}^{l}}{h_{i}} \left(\widetilde{u}_I-P_{h}^{l-1} \widetilde{u}_I, v^* \right)_i
                        & = & (\partial_t\omega,v^*)_i+ \sum_{l=0}^k \frac { \sigma _i^l}{h_i}\left(\omega - P_h^{l-1}\omega,v^*  \right)_i \\
                         \leq  \left\lVert \partial_t \omega \right\rVert_{0,\tau_i} \left\lVert v^*\right\rVert _{0,\tau_i} &+& \sum_{l=0}^k \frac { \sigma _i^l}{h_i}\left\lVert \omega - P_h^{l-1}\omega\right\rVert_{0,\tau_i} \left\lVert v^*\right\rVert _{0,\tau_i}. 
                   %     & \lesssim &  h^{k+2}\left\lVert v^*\right\rVert _{0,\tau_i}.
                 \end{eqnarray*} 
                  Summing up all $i$ and using \eqref{omegaestimate}  gives 
             \[
                  a_{h}^{SV}(\eta,v^*) + \sum_{i=1}^N\sum^k_{l=0} \frac{\sigma_{i}^{l}}{h_{i}} \left(\widetilde{u}_I-P_{h}^{l-1} \widetilde{u}_I, v^* \right)_i
                  \lesssim  h^{k+2}\left\lVert v^*\right\rVert _{0},\ \ \forall v\in \mathcal{P}_{-}. 
             \]
                %  using Cauchy-Schwarz inequality and inverse inequality.
             When $v\in \mathcal{P}^0$,  we easily obtain from the orthogonality of $P^l_h$ that 
         \[
            \sum^k_{l=0} \frac{\sigma_{i}^{l}}{h_{i}} \left(\widetilde{u}_I-P_{h}^{l-1} \widetilde{u}_I, v^*_0 \right)_i=\sum^k_{l=0} \frac{\sigma_{i}^{l}}{h_{i}} \left(\widetilde{u}_I-P_{h}^{l-1} \widetilde{u}_I, v_0 \right)_i=0.
         \]
         Moreover, since it was proved (see Theorem 5.4 in \cite{SV-Cao}) that 
     \[
       | a_{h}^{SV}(\eta,v^*_0) |\lesssim h^{k+2} \left\lVert v_0\right\rVert _{0},\ \forall  v_0\in \mathcal{P}^0, 
     \]
        then       
                \begin{equation*}
                   a_{h}^{SV}(\eta,v^*_0) + \sum_{i=1}^N\sum^k_{l=0} \frac{\sigma_{i}^{l}}{h_{i}} \left(\widetilde{u}_I-P_{h}^{l-1} \widetilde{u}_I, v^*_0 \right)_i \lesssim   h^{k+2}\left\lVert v_0^*\right\rVert _{0},\ \ v_0\in \mathcal{P}^0. 
                         \end{equation*} 
%             where  the first two steps are provided by \eqref{integelerror} and orthogonality of  the basis functions, respectively, and the result in \cite{SV-Cao} Theorem 5.4 is adopted in the last step. 
             Note that  all the function $v \in \mathcal{U}_h$ can be decomposed into $v = v_0+v_1$ with $v_0\in \mathcal{P}^0$ and $v_1\in \mathcal{P}_-$.  Consequently, 
             \begin{eqnarray*}
                          a_{h}^{SV}(\eta,v^*) + \sum_{i=1}^N\sum^k_{l=0} \frac{\sigma_{i}^{l}}{h_{i}} \left(\widetilde{u}_I-P_{h}^{l-1} \widetilde{u}_I, v^* \right)_i
                       \lesssim   h^{k+2}\left\lVert v^*\right\rVert _{0},\ \ \forall v\in\mathcal U_h. 
                 \end{eqnarray*}
               Substituting the above inequality into \eqref{eq:1},  we have 
             \begin{equation*}
                 \begin{aligned}
                      (\xi_t,\xi^*) \lesssim h^{k+2} \|\xi^*\|_0. 
                     \end{aligned}
             \end{equation*}
             Then \eqref{eq:2} follows from the 
             the Gronwall's inequality, the equivalence \eqref{equi:1} and  the estimate of $\omega$ in \eqref{omegaestimate}. 
            \end{proof}
               Thanks to the supercloseness result \eqref{eq:2} between $u_I$ and $u_h$, There holds  the following 
              superconvergence results for  
               the cell average error and error at downwind points. 
              
             \begin{theorem}\label{superconvergenceanalysis}
                Let $u\in H^{k+3}(\Omega)$ be the solution of \eqref{1dlinear}, $u_h$ be the solution of \eqref{Gar}  with the initial solution  chosen as $ u_h(x, 0)=\widetilde{(u_0)}_I(x)$.  Then  for both OF-LSV and OF-RRSV, 
                      \begin{equation}\label{superconvergenceresult}
                         \begin{aligned}
                         e_{n} &:=\left(\frac{1}{N} \sum_{i=1}^N\left(u-u_h\right)^2(x_{i+\frac{1}{2}}^{-},t)\right)^{\frac{1}{2}}\lesssim h^{k+2},\\
                         e_{c} &:=\left(\frac{1}{N} \sum_{i=1}^N\left(\frac{1}{h_i} \int_{\tau_i}\left(u-u_h\right) d x\right)^2\right)^{\frac{1}{2}} \lesssim h^{k+2}. 
                 %         e_{p}&:=\max _{i, \ell}\left|\left(u-u_h\right)\left(G_{i, \ell}\right)\right|\lesssim h^{k+\frac{3}{2}}.
                         \end{aligned}
                         \end{equation}
             \end{theorem}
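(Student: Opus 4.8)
The plan is to obtain both estimates as corollaries of the supercloseness bound $\|(u_I-u_h)(\cdot,t)\|_0\lesssim h^{k+2}$ proved in \eqref{eq:2}, combined with the interpolation conditions satisfied by $u_I$ and the quadrature accuracy of the two node families. For each quantity I would begin from the splitting $u-u_h=(u-u_I)+(u_I-u_h)$. The difference $u_I-u_h\in\mathcal{U}_h$ is a piecewise polynomial of degree $k$, so every pointwise or cell-averaged functional of it is controlled by $\|u_I-u_h\|_{0,\tau_i}$ through an inverse (trace) inequality; the remaining pure interpolation error $u-u_I$ is then handled by exploiting precisely where $u_I$ reproduces $u$.

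For the downwind error $e_n$ the key observation is that both OF-LSV and OF-RRSV use $x_{i,k+1}=x_{i+\frac12}$ as an interpolation node, so $(u-u_I)(x_{i+\frac12}^-)=0$ and therefore $(u-u_h)(x_{i+\frac12}^-)=(u_I-u_h)(x_{i+\frac12}^-)$. The inverse inequality $|(u_I-u_h)(x_{i+\frac12}^-)|^2\lesssim h_i^{-1}\|u_I-u_h\|_{0,\tau_i}^2$ then reduces the sum to $\frac1N\sum_i h_i^{-1}\|u_I-u_h\|_{0,\tau_i}^2$; using $\frac1N\sim h$ and $h_i\sim h$ the factor $h_i^{-1}$ is absorbed, giving $e_n^2\lesssim\|u_I-u_h\|_0^2\lesssim h^{2(k+2)}$, which is the claimed bound.

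For the cell-average error $e_c$ I would split in the same way. The contribution of $u_I-u_h$ is treated by Cauchy--Schwarz, $|\frac{1}{h_i}\int_{\tau_i}(u_I-u_h)\,dx|\le h_i^{-1/2}\|u_I-u_h\|_{0,\tau_i}$, and summed exactly as above to yield an $O(h^{k+2})$ term. The contribution of $u-u_I$ is where the choice of CV points enters: since $u_I\in\mathcal{P}_k$ agrees with $u$ at every quadrature node and $Q_k$ is exact on $\mathcal{P}_k$, one has $Q_k(u)=Q_k(u_I)$ and $R_i(u_I)=0$, hence $\int_{\tau_i}(u-u_I)\,dx=R_i(u)$. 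The exactness degrees recalled in the stability proof (degree $2k-1$ for the Gauss rule and $2k$ for the right Radau rule) then bound $R_i(u)$ by $h_i^{2k+1}$ and $h_i^{2k+2}$ respectively, so $\frac{1}{h_i}\int_{\tau_i}(u-u_I)\,dx$ is of order $h^{2k}$ or $h^{2k+1}$, which is within $h^{k+2}$; after the $\frac1N$-averaging this closes the estimate.

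The step I expect to be the main obstacle is the cell-average part: the clean reduction $\int_{\tau_i}(u-u_I)\,dx=R_i(u)$ must be justified uniformly for both node families, and one must verify that the resulting quadrature remainder has sufficient order (this is what genuinely distinguishes the two schemes, the right Radau rule gaining one extra power). Equally delicate is the bookkeeping of scales: each local bound introduces negative powers $h_i^{-1/2}$ or $h_i^{-1}$ from the inverse inequalities, and these must be balanced against the global counting factor $\frac1N\sim h$ so that the two summed contributions collapse to a single $h^{k+2}$. By contrast, the downwind estimate is comparatively routine once the interpolation condition at $x_{i+\frac12}$ is invoked.
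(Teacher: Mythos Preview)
Your approach is correct and matches the paper's: both split $u-u_h=(u-u_I)+(u_I-u_h)$, exploit the interpolation condition $u_I(x_{i+1/2}^-)=u(x_{i+1/2}^-)$ for $e_n$, and control the $(u_I-u_h)$ contribution by \eqref{eq:2} via an inverse inequality. The only difference is that for $e_c$ the paper simply cites the estimate $\bigl(\frac{1}{N}\sum_i(\frac{1}{h_i}\int_{\tau_i}(u-u_I)\,dx)^2\bigr)^{1/2}\lesssim h^{k+2}$ from \cite{SV-Cao}, whereas you derive it through the clean identity $\int_{\tau_i}(u-u_I)\,dx=R_i(u)$; just be aware that your resulting Gauss bound $h^{2k}$ dominates $h^{k+2}$ only when $k\ge 2$, so that restriction should be stated explicitly.
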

             \begin{proof}
                 By using the inverse inequality, 
              \begin{eqnarray*}
                 e_n = \left(\frac{1}{N} \sum_{i=1}^N\left(u_I-u_h\right)^2(x_{i+\frac{1}{2}}^{-},t)\right)^{\frac{1}{2}} 
                 \lesssim \|u_I-u_h\|_0. 
              \end{eqnarray*}
               Then the first inequality of \eqref{superconvergenceresult} follows from \eqref{eq:2}.  
               Similarly, since (see, \cite{SV-Cao})
              \[
                 \left(\frac{1}{N} \sum_{i=1}^N\left(\frac{1}{h_i} \int_{\tau_i}\left(u-u_I\right) d x\right)^2\right)^{\frac{1}{2}}\lesssim h^{k+2}, 
              \]
              then
             \[
               e_c = \left(\frac{1}{N} \sum_{i=1}^N\left(\frac{1}{h_i} \int_{\tau_i}\left(u-u_I+u_I-u_h\right) d x\right)^2\right)^{\frac{1}{2}}
               \lesssim  h^{k+2}+\|u_I-u_h\|_0, 
             \]
              which yields (together with \eqref{eq:2}) the second inequality of \eqref{superconvergenceresult}. 
            \end{proof}

             \section{Numerical experiments}

             In this section, the numerical tests will be presented to validate the accuracy, robustness of current scheme.
             The accuracy tests are provided for the linear advection equation and Euler solutions.
             The ninth-order strong stability preserving (SSP) Runge-Kutta method is applied as temporal discretization aimed at avoiding the interference of temporal
             discretization on the convergence rates.  For the flows with discontinuities,  a few benchmark cases for Euler solutions are provided.
             The specific heat ratio takes $\gamma = 1.4$, and the classic fourth order Runge-Kutta method is used for  temporal discretization.
             The CFL condition is
             \begin{equation*}
             \Delta t=\frac{C F L}{(\alpha +a_{0} )}\Delta x, ~ \alpha=\max _{i, s}\left|\left(\lambda_{s}\right)_{i+1/2}^{\pm}\right|, ~ a_{0}={\displaystyle \max _{i\in\mathcal{Z}_N}} \sum_{l=0}^{k} \sigma_{i}^{l},
             \end{equation*}
             where $\Delta t$ and $\Delta x$ are time step and cell size, $\left(\lambda_{s}\right)_{i+1/2}^{\pm}$
             are real eigenvalues of the Jacobian matrix $\partial\boldsymbol{F}/\partial\boldsymbol{U}$ at $x_{i+1/2}^{\pm}$ and $\sigma_{i}^{l}$ is the damping coefficient defined in \eqref{xigema_ij}.
             This reveals that the time step is severely limited by the coefficient $a_{0}$. In the 
             computation, the CFL number takes $2.4$.  
              $\mathcal{Q}_k$ with $k=2$ and $3$ are used for the accuracy tests, and 
            $\mathcal{Q}_k$  with $k=2$ is only used  for the cases with discontinuities.           
            The  mesh with uniform SVs are used for 1D cases, and
            the uniform rectangular meshes are used  for 2D cases.  Since the numberical solution of OF-LSV and OF-RRSV are resemblance, only the results of OF-LSV are presented. 
             
             \subsection{Accuracy tests}
In order to verify the order of optimal convergence and
superconvergence of current scheme, the one-dimensional and
two-dimensional cases are provided. In our numerical experiments,
the $L^2$ error $\|e_0\|_0$, the cell average error $e_c$ and the
average error at downwind points $e_n$ will be tested.

The first case is given for one-dimensional linear scalar equation
\eqref{1dlinear}, and the initial condition is set as follows
\begin{equation*}
u(x)=1+0.2 \sin (\pi x).
\end{equation*}
The computation domain $\Omega=[0,2]$ and the periodic boundary
condition is adopted at both ends. The analytic solution is
\begin{equation*}
u(x, t)=1+0.2 \sin (\pi(x-t)).
\end{equation*}
The uniform mesh with $N$ SVs are used. The errors and the order of convergence for
$\|e_0\|_0$, $e_c$ and $e_n$  are presented in Table
\ref{tab-1d-OFSV}  at $t=2$. The obtained order in Table
\ref{tab-1d-OFSV} are consistent with the analysis in Theorem \ref{optimalanalisis} and Theorem
\ref{superconvergenceanalysis},
i.e.,  $(k+1)$-$th$ optimal convergence order for $\|e_0\|_0$, both
$(k+2)$-$th$ convergence order for both $e_c$ and $e_n$. In order to
investigate the effect of damping term on accuracy, this example is
also tested by  the classical SV method with $k=2$ and $3$. The
errors and the order of convergence are listed in Table
\ref{tab-1d-SV}. The expected orders for SV method in \cite{SV-Cao}
are verified, i.e., the optimal convergence rate $(k+1)$ for the
$L^2$ error $\|e\|_0$, and the convergence rate $(2k)$ for both the
cell average error $e_c$ and the error at downwind point $e_n$. With
the mesh refinement, the $L^2$ error $\|e_0\|_0$ of OFSV method and
SV method get closer, which means the damping term becomes smaller
for the smooth analytic solution. However, the damping term does
affect on the superconvergence accuracy.

\begin{table}[htbp]
  \footnotesize
  \caption{One-dimensional accuracy
test: errors and convergence orders of $\|e_0\|_0$, $e_c$, $e_n$ for
OFSV scheme.}\label{tab-1d-OFSV}
  \begin{center}
    \begin{tabular}{c|c|cc|cc|cc} \hline
      $k$ &Mesh  &$\|e_0\|_0$  &order &$e_c$   &order        &$e_n$  &order   \\
      \hline
      ~ &$8$ &4.9119E-03 &~   &2.2847E-03 &~ &2.4315E-03 &~\\
      ~ &$16$ &3.5267E-04 &3.8000 &1.5955E-04 &3.8399 &1.6600E-04 &3.8727 \\
      2 &$32$ &2.4316E-05 &3.8583 &9.2332E-06 &4.1110 &9.3982E-06 &4.1426 \\
      ~ &$64$ &2.1921E-06 &3.4715 &5.3050E-07 &4.1214 &5.3672E-07 &4.1301 \\
      ~ &$128$ &2.4509E-07 &3.1609 &3.1495E-08 &4.0742 &3.1815E-08 &4.0764 \\
      \hline
      ~ &$8$ &2.0132E-04 &~   &1.3213E-04 &~ &1.3945E-04 &~ \\
      ~ &$16$ &6.2608E-06 &5.0070 &4.0326E-06 &5.0341 &4.1981E-06 &5.0539 \\
      3 &$32$ &2.3475E-07 &4.7371 &1.3517E-07 &4.8988 &1.3820E-07 &4.9249 \\
      ~ &$64$ &1.0447E-08 &4.4900 &4.5274E-09 &4.9000 &4.5788E-09 &4.9156 \\
      ~ &$128$ &5.5435E-10 &4.2361 &1.4816E-10 &4.9334 &1.4915E-10 &4.9401 \\ \hline
    \end{tabular}
  \end{center}
  \vspace{0.4cm}
  \caption{ One-dimensional accuracy test:  errors and convergence orders of $\|e_0\|_0$, $e_c$, $e_n$ for SV scheme.}\label{tab-1d-SV}
  \begin{center}
    \begin{tabular}{c|c|cc|cc|cc} \hline
      $k$ & Mesh  &$\|e_0\|_0$  &order &$e_c$   &order        &$e_n$  &order   \\
      \hline
      ~ &$8$ &1.1168E-03 &~ &2.8677E-04  &~ &3.0598E-04&~\\
      ~ &$16$ &1.2537E-04 &3.1551 &1.9091E-05 &3.9089 &1.9951E-05 &3.9389\\
      2 &$32$ &1.5151E-05 &3.0488 &1.2196E-06 &3.9647 &1.2674E-06 &3.9765\\
      ~ &$64$ &1.8767E-06 &3.0130 &7.6902E-08 &3.9872 &7.9807E-08 &3.9892\\
      ~ &$128$  &2.3405E-07 &3.0033 &4.8272E-09 &3.9938&5.0093E-09 &3.9938\\
      \hline
      ~ &$8$ &5.0192E-05 &~ &1.2012E-06 &~ &5.4416E-06 &~\\
      ~ &$16$  &2.9697E-06 &4.0791 &2.7583E-08 &5.4446 &3.0592E-08 &7.4747\\
      3 &$32$  &1.8558E-07 &4.0002 &4.3641E-10 &5.9820 &4.7041E-10 &6.0231\\
      ~ &$64$  &1.1599E-08 &4.0000 &6.8475E-12 &5.9939 &7.3725E-12 &5.9956\\
      ~ &$128$ &7.2493E-10 &4.0000 &2.9283E-13 &4.5474 &2.9877E-13 &4.6351\\\hline
    \end{tabular}
  \end{center}
  \end{table}

\begin{table}[htbp]
  \footnotesize
  \caption {Two-dimensional accuracy
  test: errors and convergence orders of $\|e_0\|_0$, $e_c$, $e_n$ for
  OFSV scheme.}\label{tab-2d-OFSV}
  \begin{center}
    \begin{tabular}{c|c|cc|cc|cc} \hline
      $k$ &Mesh  &$\|e_0\|_0$  &order &$e_c$   &order      &$e_n$  &order   \\
      \hline ~ &$8^2$ &1.8533E-02 &~  &9.7494E-03 &~&9.6293E-03 &~\\
      ~ &$16^2$ &1.2227E-03 &3.9219 &5.8527E-04 &3.9020 &6.2540E-04 &3.9446 \\
      2 &$32^2$ &7.4101E-05 &4.0445 &3.5010E-05 &4.0633 &3.6142E-05 &4.1130 \\
      ~ &$64^2$ &4.9616E-06 &3.9006 &2.0624E-06 &4.0854 &2.0998E-06 &4.1054 \\
      ~ &$128^2$ &4.2071E-07 &3.5617 &1.2461E-07 &4.0488 &1.2623E-07 &4.0560 \\
      \hline
      ~ &$8^2$ &1.4675E-03 &~  &6.7864E-04 &~&7.5641E-04 &~ \\
      ~ &$16^2$  &3.4150E-05 &5.4254 &1.6197E-05 &5.3889&1.7362E-05 &5.4452 \\
      3 & $32^2$ &1.0958E-06 &4.9619 &5.2147E-07 &4.9570 &5.4169E-07 &5.0023 \\
      ~ &$64^2$ &3.9061E-08 &4.8100 &1.7557E-08 &4.8925 &1.7918E-08 &4.9180 \\
      ~ &$128^2$ &1.5517E-09 &4.6539 &5.7939E-10 &4.9214 &5.5602E-10 &4.9343 \\ \hline
    \end{tabular}
  \end{center}
  \vspace{0.4cm}
  \caption{  Two-dimensional accuracy
  test:  errors and convergence orders of $\|e_0\|_0$, $e_c$, $e_n$
  for SV scheme.}\label{tab-2d-SV}
  \begin{center}
    \begin{tabular}{c|c|cc|cc|cc} \hline
      $k$ & Mesh &$\|e_0\|_0$  &order   &$e_c$   &order     &$e_n$  &order   \\
      \hline ~ & $8^2$ &1.7926E-03 &~&5.5948E-04  &~&6.1273E-04&~\\
      ~ & $16^2$ &1.8552E-04 &3.2724&3.8014E-05 &3.8795  &4.0014E-05 &3.9367\\
      2 & $32^2$ &2.1701E-05 &3.0957&2.4428E-06 &3.9599  &2.5469E-06 &3.9737\\
      ~ & $64^2$ &2.6629E-06 &3.0267&1.5457E-07 &3.9822  &1.6100E-07 &3.9836\\
      ~ & $128^2$ &3.3127E-07 &3.0069&9.7551E-09 &3.9859  &1.0190E-08 &3.9818\\
       \hline
      ~ & $8^2$ &7.1124E-05 &~&2.3412E-06 &~&1.0883E-06 &~\\
      ~ & $16^2$ &4.2004E-06 &4.0817 &5.4813E-08 &5.4166  &6.1185E-08 &7.4747\\
      3 & $32^2$ &2.6246E-07 &4.0004 &8.7141E-10 &5.9750  &9.4081E-10 &6.0231\\
      ~ & $64^2$ &1.6403E-08 &4.0000 &1.3685E-11 &5.9927  &1.4738E-11 &5.9963\\
      ~ & $128^2$ &1.0252E-09 &4.0000 &3.5470E-13 &5.2698  &3.6942E-13 &5.3182\\\hline
    \end{tabular}
  \end{center}
  \end{table}

The second  case for accuracy is the advection of density perturbation
for two-dimensional Euler equations, and the initial conditions are
given follows
\begin{equation*}
\rho_0(x,y)=1+0.2 \sin (\pi(x+y)),~ p_0(x,y)=1,~ U_0(x,y)=1,~ V_0(x,y)=1.
\end{equation*}
The computational domain is $[0,2]\times[0,2]$ and the periodic
boundary conditions are adopted in both directions. The analytic
solutions are
\begin{equation*}
\rho(x,y, t)=1+0.2 \sin (\pi((x+y)-2t)),~p(x,y, t)=1,~U(x,y, t)=1,~V(x,y, t)=1.
\end{equation*}
The uniform mesh with $N^2$ SVs are used in the computation. The errors and
convergence orders, including $\|e_0\|$,$e_c$,$e_n$, are presented
in Table.\ref{tab-2d-OFSV} at $t = 2$. The expected order of
accuracy are obtained.  As reference,  this case is also tested by
the classical SV method. The errors and
convergence orders are presented in Table.\ref{tab-2d-SV}. The
numerical results indicate the damping term would not pollute the
optimal order of accuracy and reduce the order of superconvergence for the Euler equations as well.

 \begin{figure}[htbp]
\centering
\includegraphics[width=0.45\textwidth]{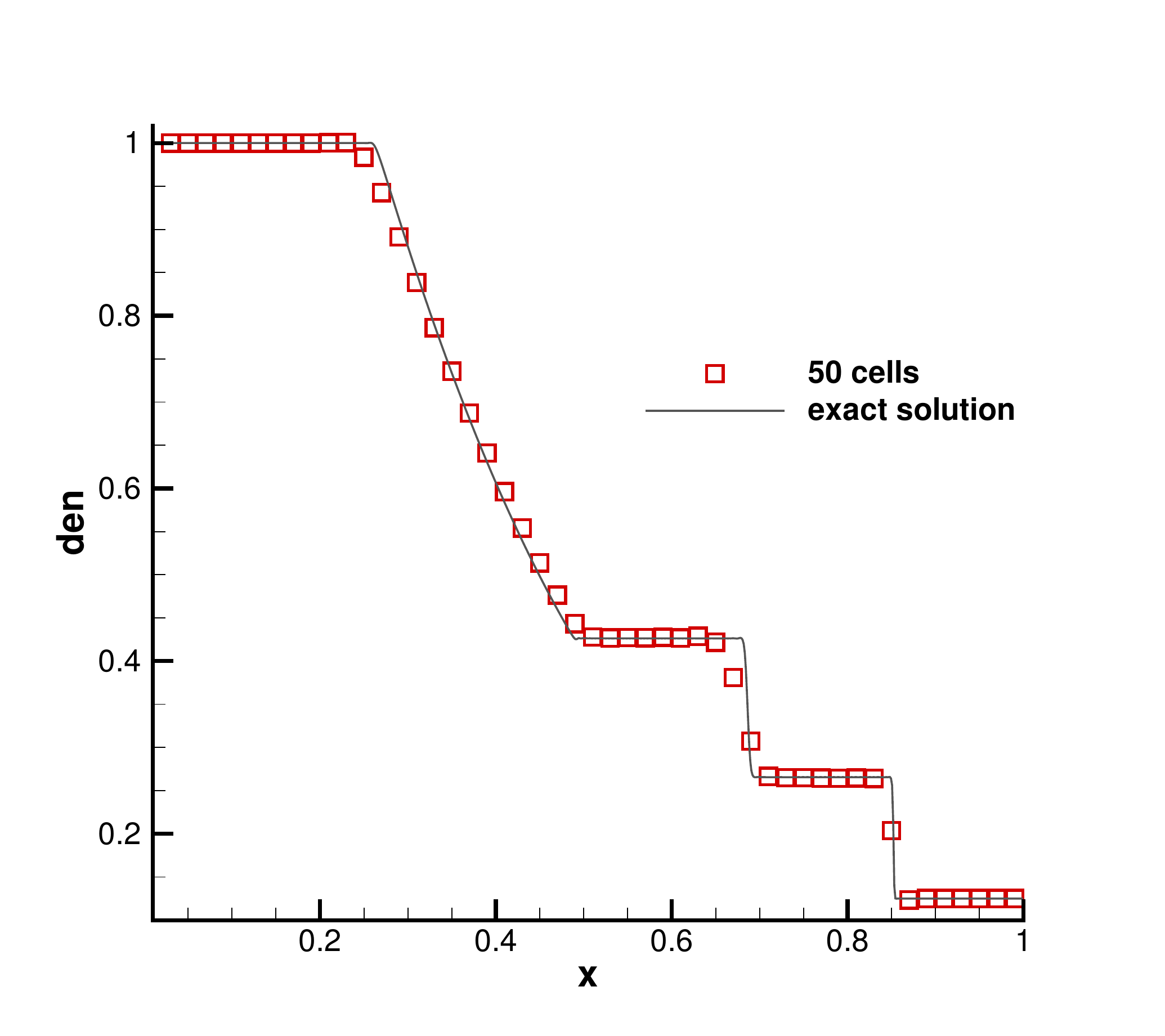}
\includegraphics[width=0.45\textwidth]{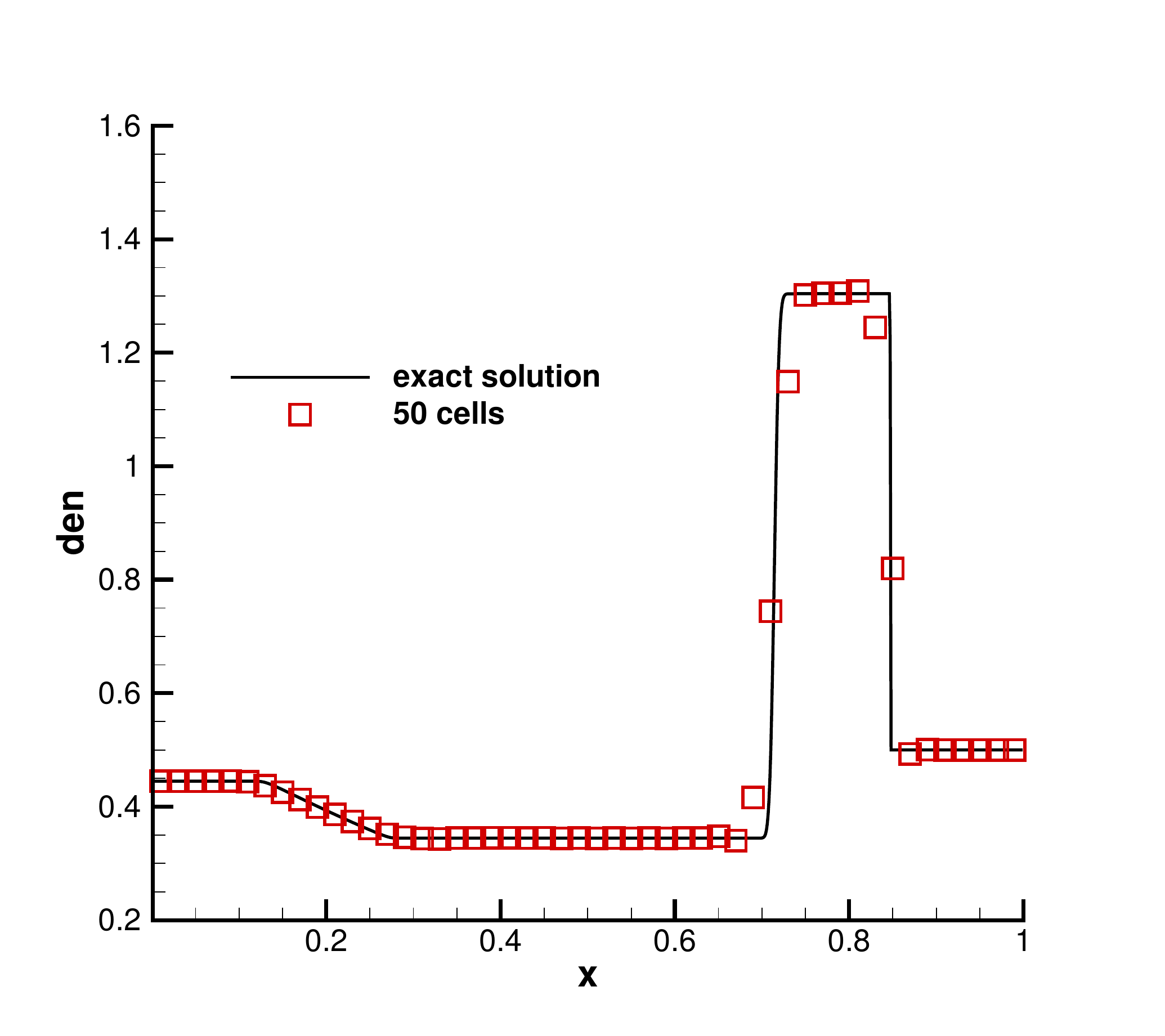}
\includegraphics[width=0.45\textwidth]{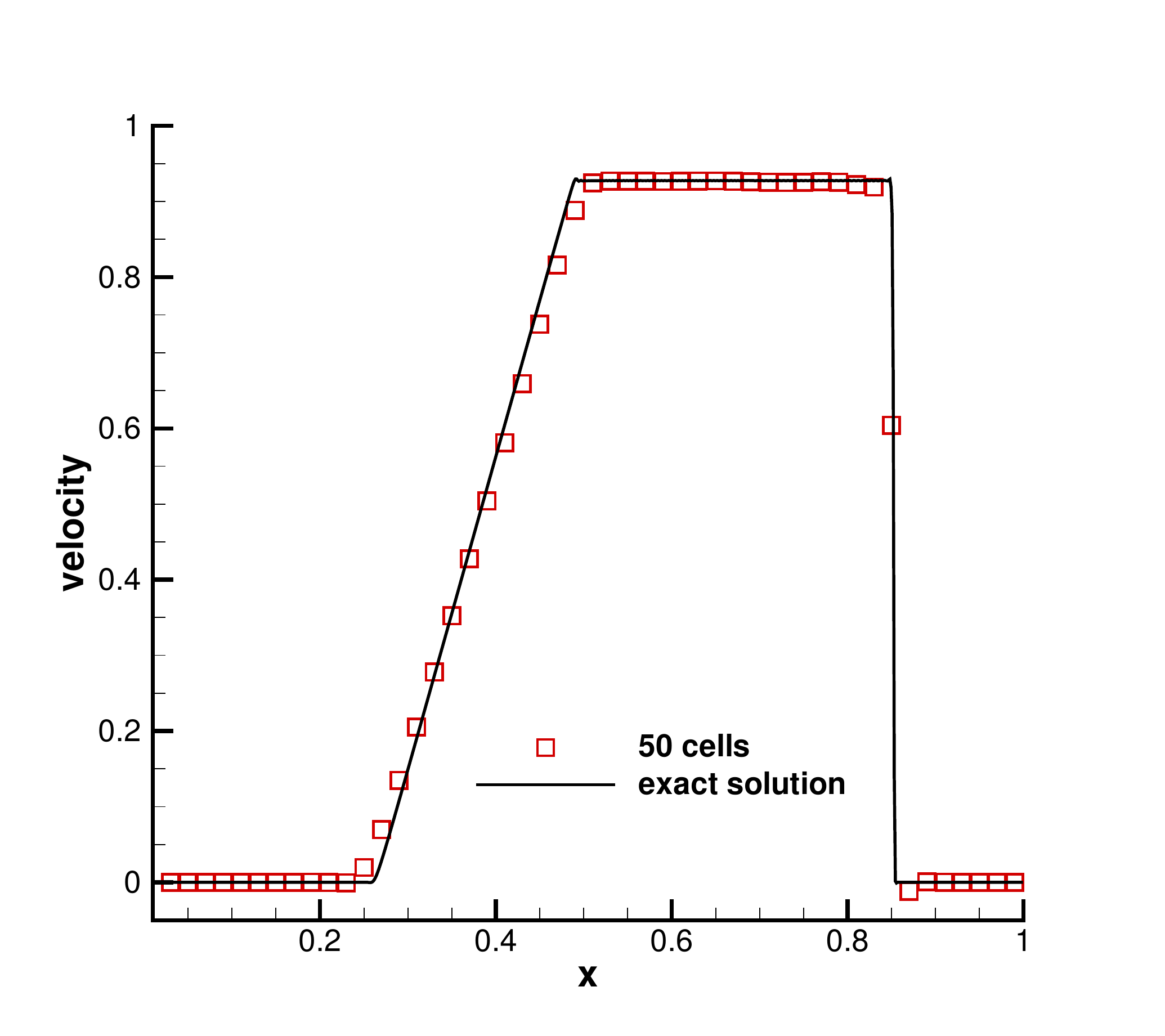}
\includegraphics[width=0.45\textwidth]{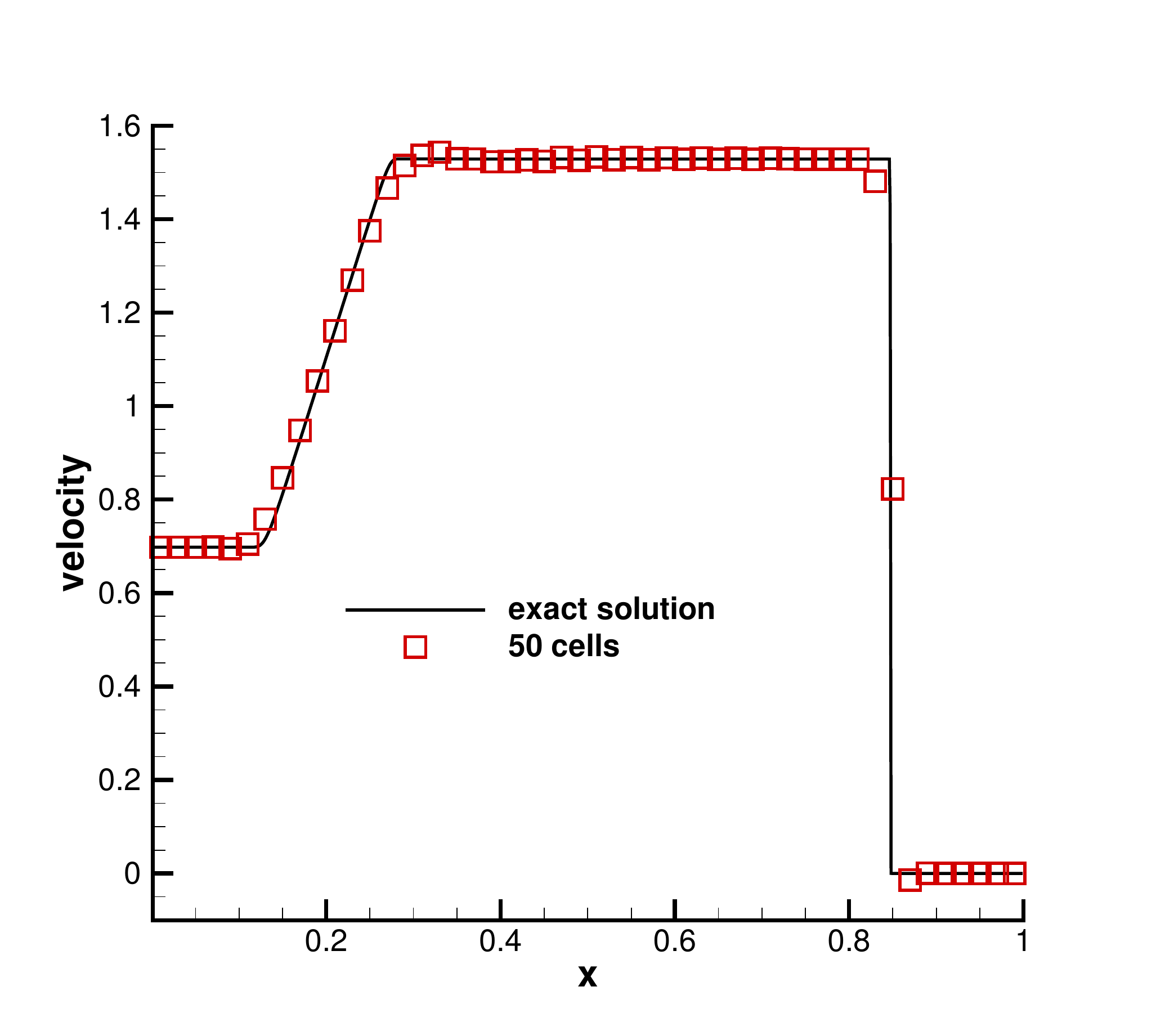}
\includegraphics[width=0.45\textwidth]{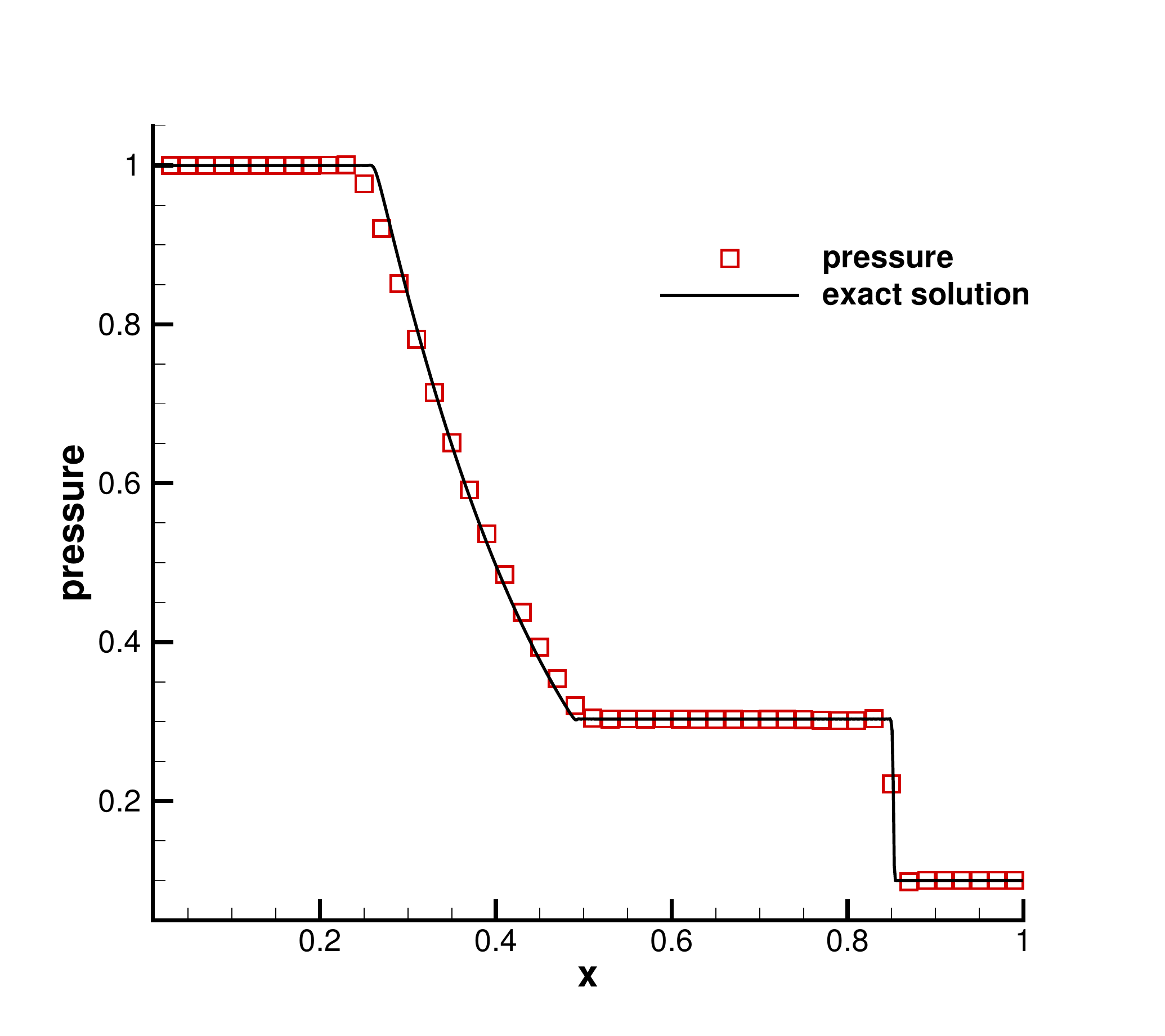}
\includegraphics[width=0.45\textwidth]{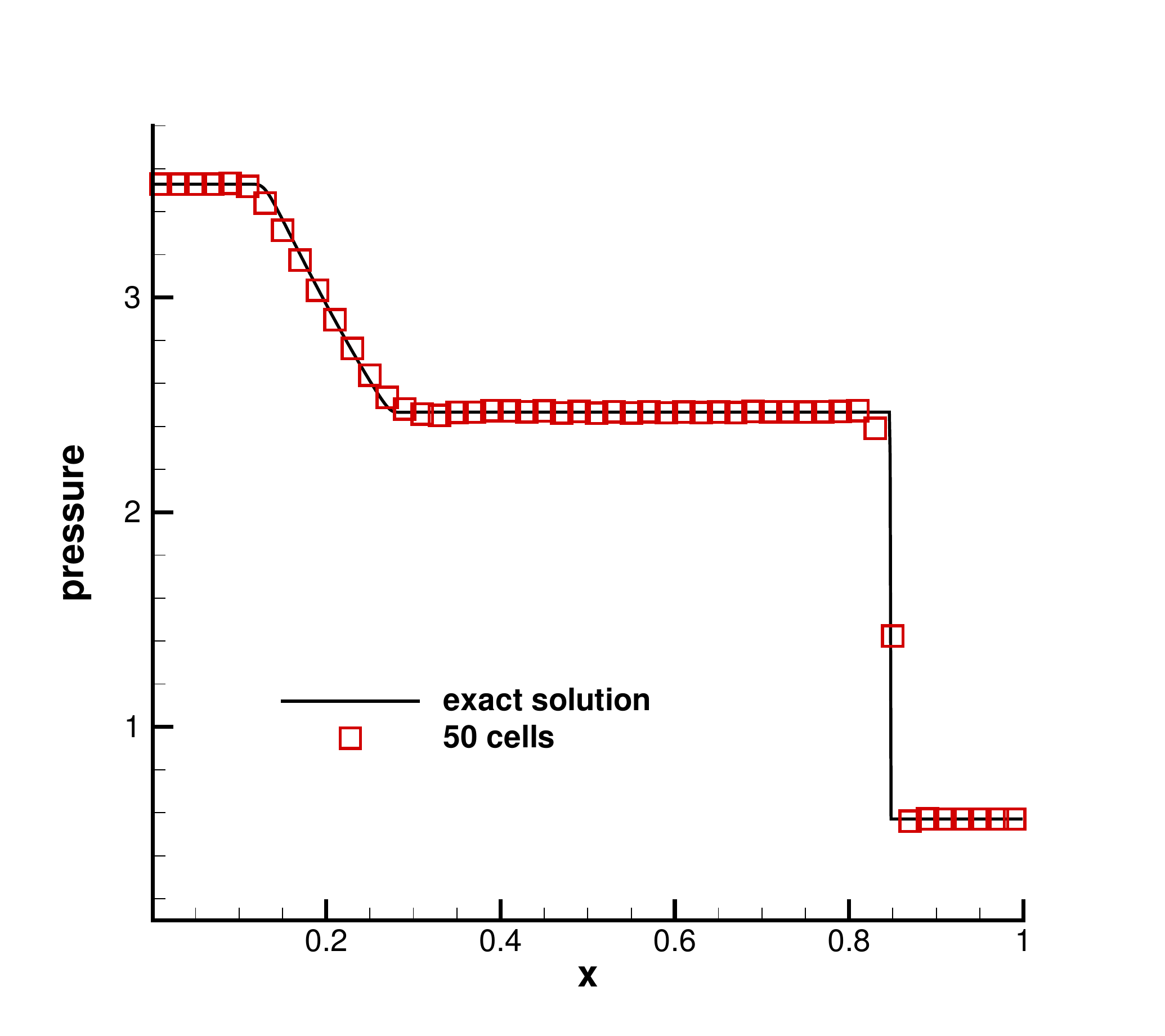}
\caption{\label{1d-riemann}  One-dimensional Riemann problem:
the density, velocity and pressure distributions for Sod problem (left) at $t = 0.2$, and  Lax problem (right)  at $t = 0.14$.}
\end{figure}

\subsection{One-dimensional Riemann problems}
In this case, two Riemann problems of one-dimensional Euler equations are tested.
The first test is Sod problem and the initial conditions are given as follows
 \begin{equation*}
(\rho, U, p)=\begin{cases}
(1,0,1),  &0 \leq x<0.5,\\
(0.125,0,0.1), &0.5 \leq x \leq 1.
\end{cases}
\end{equation*}
The second one is Lax problem and the initial conditions are given as follows
\begin{equation*}
(\rho, U, p)=\begin{cases}
(0.445,0.698,3.528), & 0 \leq x<0.5,\\
(0.5,0,0.571), & 0.5 \leq x \leq 1.
\end{cases}
\end{equation*}
For these two tests, the computational domain is $[0,1]$ with 50
uniform SVs and non-reflecting boundary condition is adopted on
both ends. The cell average of density, velocity, and pressure
distributions for the third-order OFSV method and the exact
solutions are presented in Figure.\ref{1d-riemann}  for Sod problem
at $t = 0.2$ and for Lax problem at $t = 0.14$. The numerical
results agree well with the exact solutions and the spurious
oscillations are effectively restrained.

\begin{figure}[!h]
\centering
\includegraphics[width=0.475\textwidth]{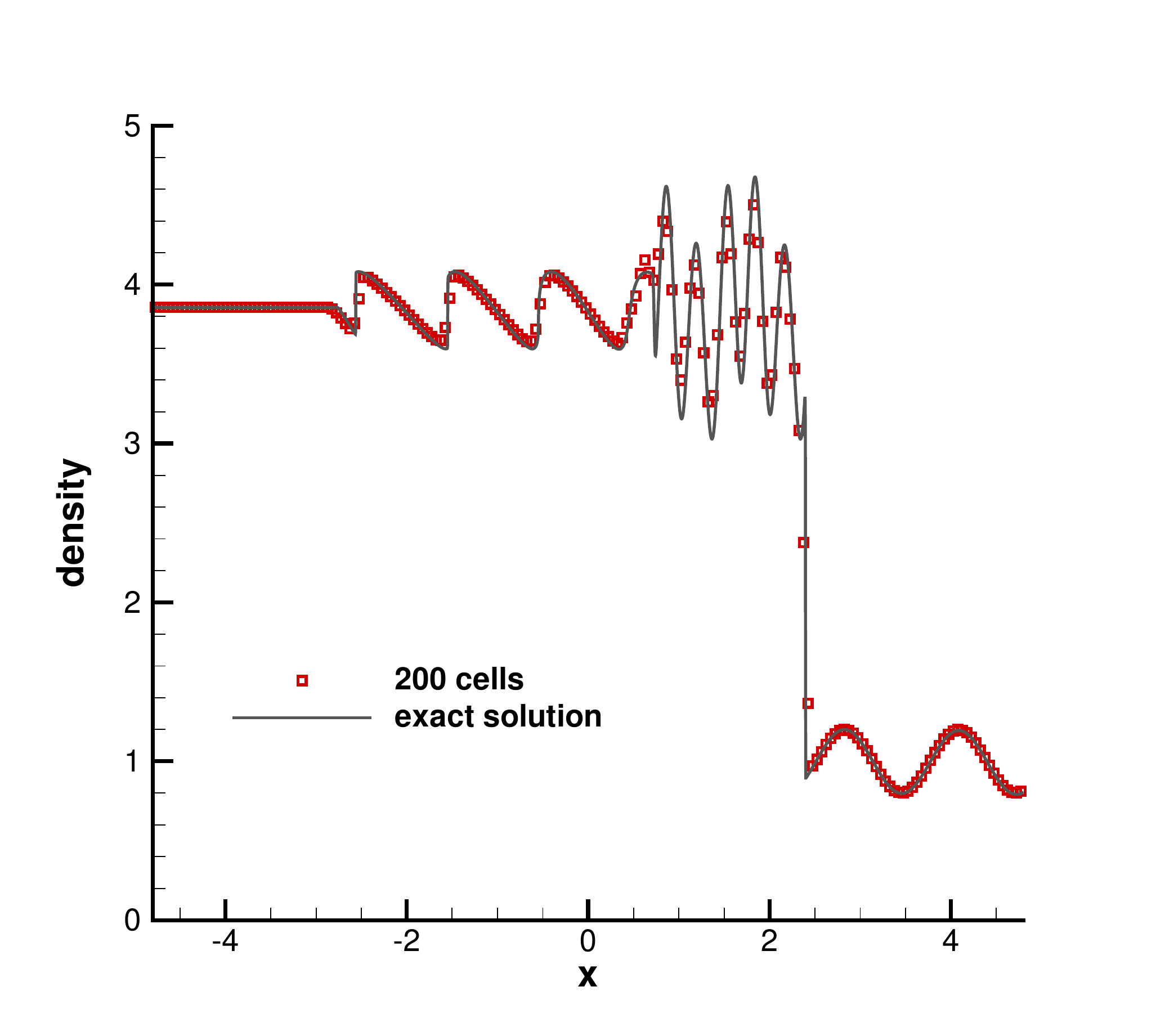}
\includegraphics[width=0.475\textwidth]{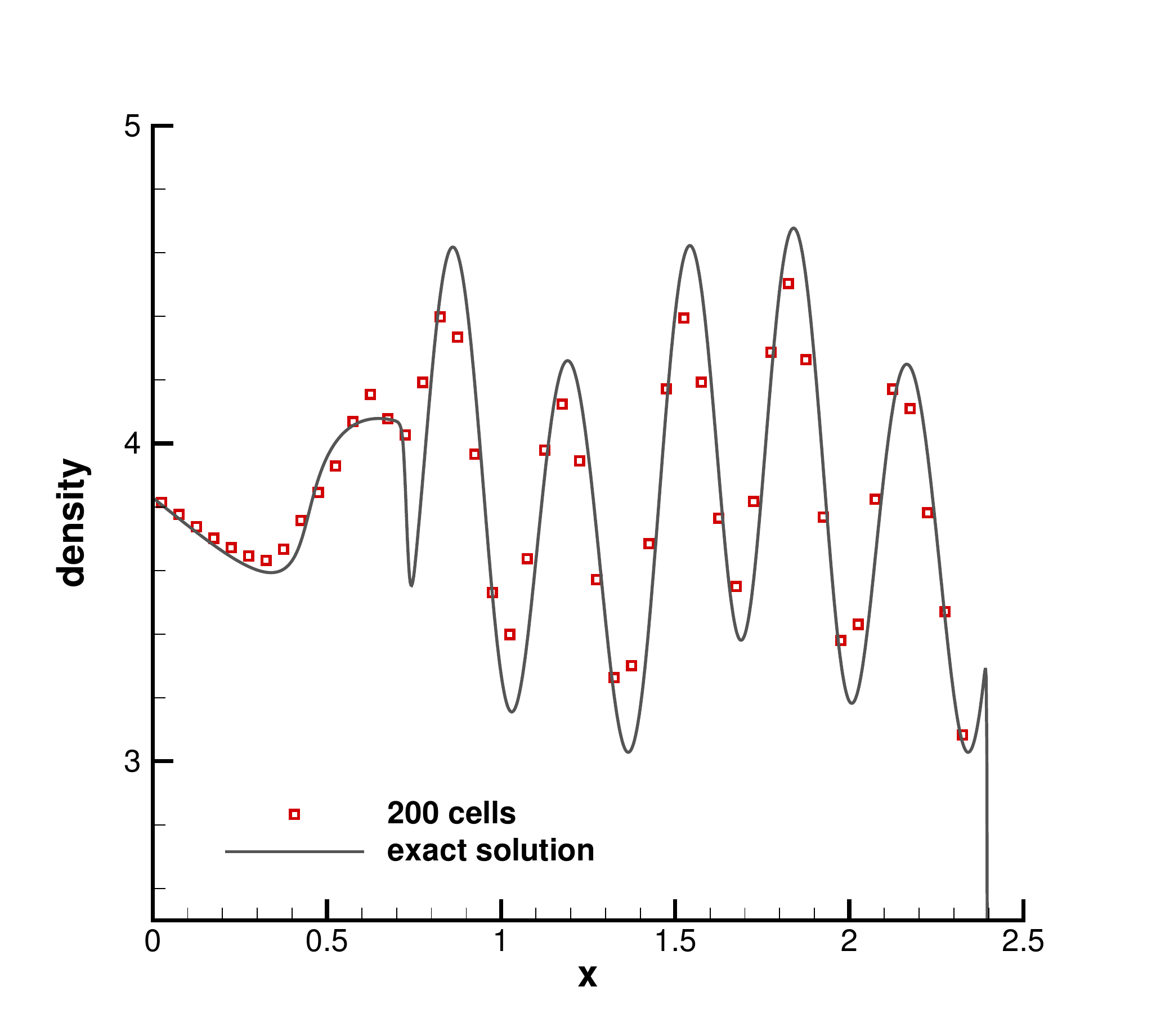}
\caption{\label{Shu-Osher} Shu-Osher problem: the density distribution and local enlargement at $t = 2$.}
\centering
\includegraphics[width=0.475\textwidth]{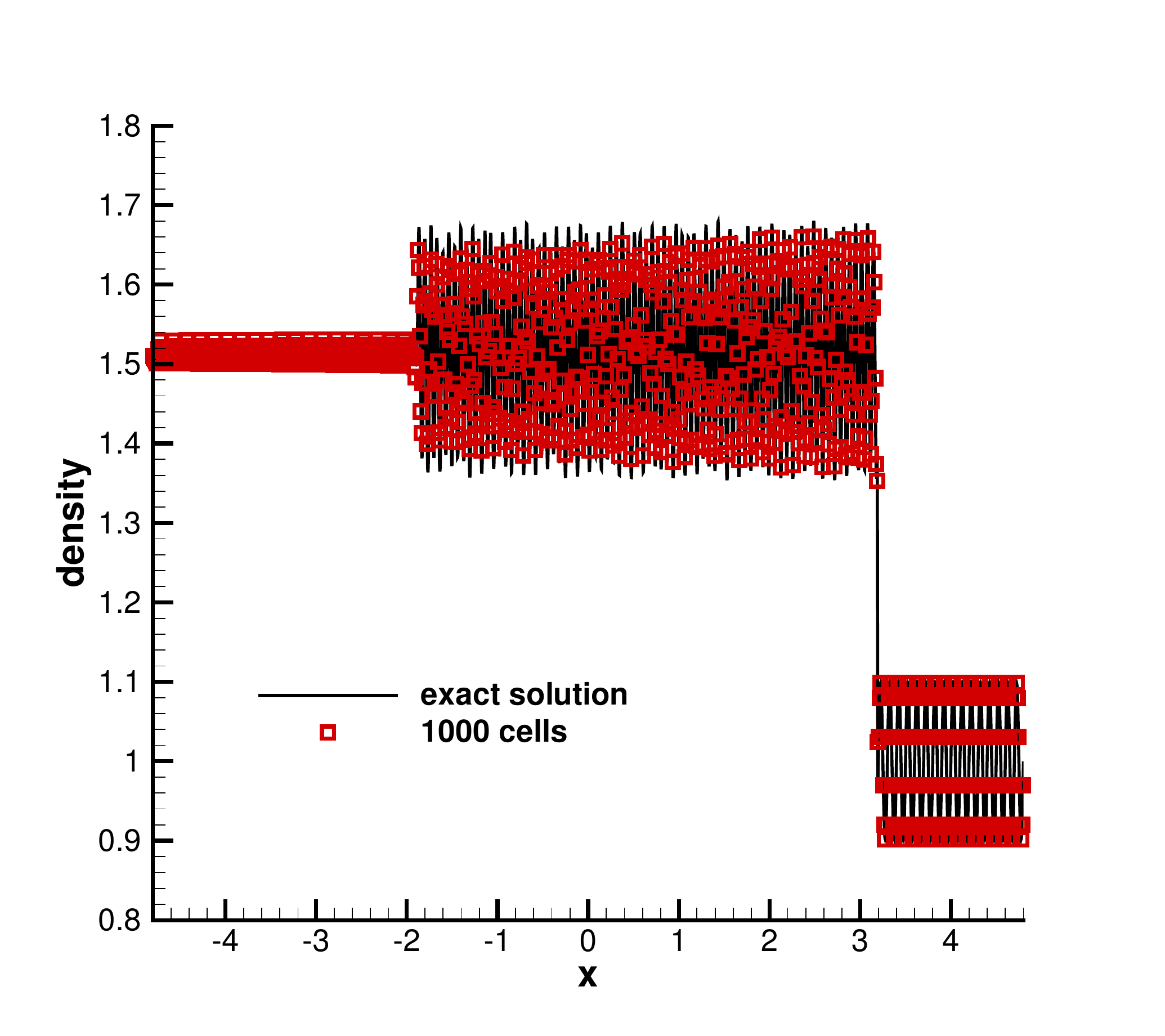}
\includegraphics[width=0.475\textwidth]{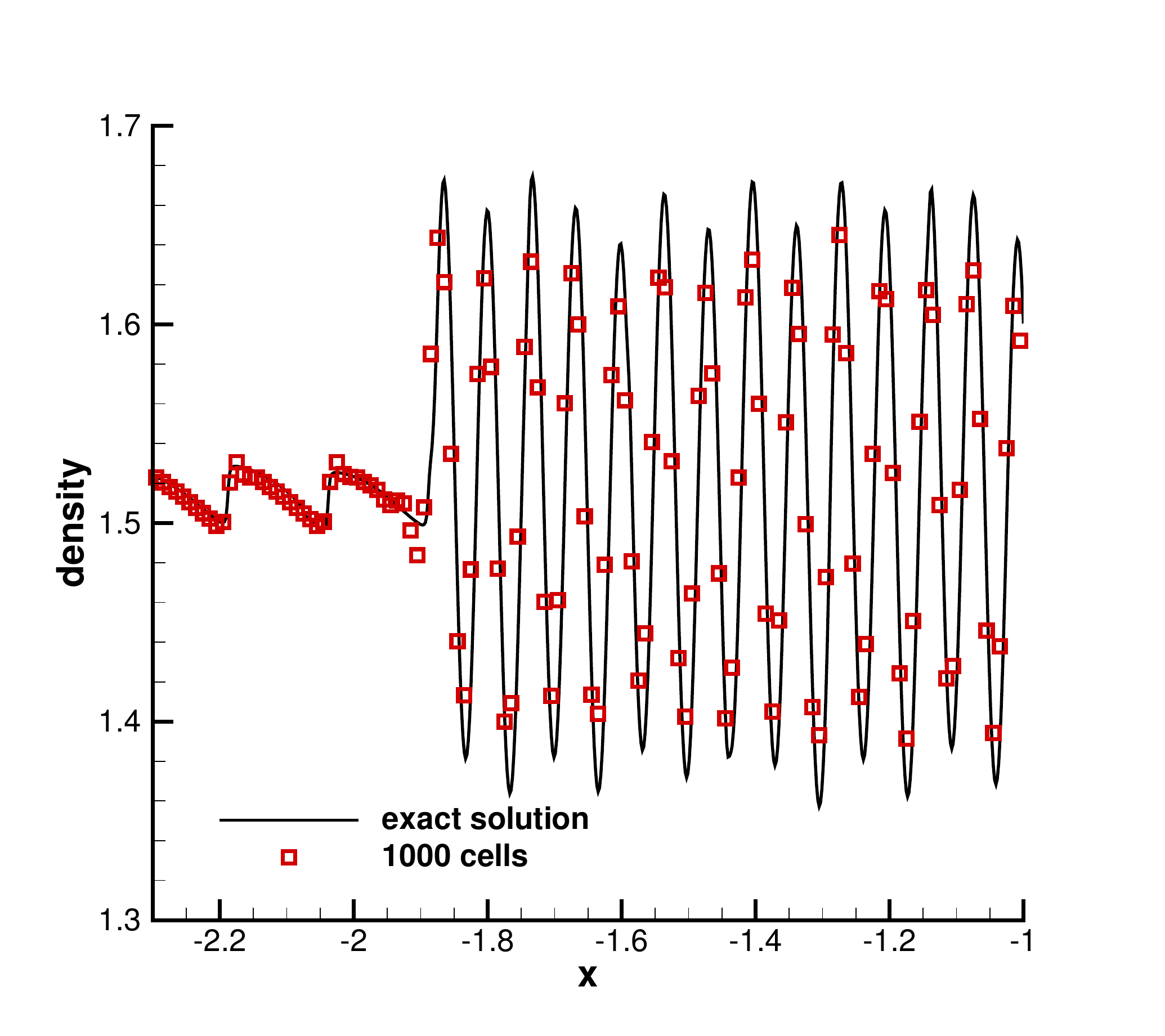}
\caption{\label{Titarev-Toro} Titarev-Toro problem: the  density distribution and local enlargement at $t = 5$.}
\end{figure}

\subsection{Shock-acoustic interactions}
For the one-dimensional case, another test case is the
Shu-Osher shock acoustic interaction \cite{ENO2}, describing the
interaction between a right moving  Mach $3$ shock interacting with a
sine wave in density. This is a typical example to show the
advantage of a high order scheme because both shocks and complex
smooth region structures coexist. The computational domain is taken
to be $[-5,5]$, and the initial conditions are given as
\begin{equation*}
(\rho, U, p)= \begin{cases}
(3.857134,2.629369,10.33333), & -5 \leq x \leq -4, \\
(1+0.2 \sin (5 x), 0,1), & -4<x<5.
\end{cases}
\end{equation*}
Uniform mesh with $200$ SVs is used. The cell average of density
distribution and local enlargement are presented in Figure.
\ref{Shu-Osher} at $t=2$, where the reference solution is given by
the fifth order finite volume WENO method with $1000$ cells. The
numerical solutions also agree well with the reference solution, which shows that the
current scheme controls the spurious oscillation effectively.

As an extension of Shu-Osher problem, Titarev-Toro shock  wave
interaction problem \cite{Case-Titarev}  is also tested, which
simulates a severely oscillatory wave interacting with shock. The
computational domain is also taken to be $[-5,5]$, and the initial
conditions for this case are given as follows
\begin{equation*}
(\rho, U, p)= \begin{cases}
(1.515695,0.523346,1.805), & -5<x \leq-4.5,\\
(1+0.1 \sin (20 \pi x), 0,1). & -4.5<x<5.
\end{cases}
\end{equation*}
Uniform mesh with $1000$ SVs is used. The density distribution  and
local enlargement at $t=5$ are presented  in Figure.\ref{Titarev-Toro},
where the reference solution is given by
the fifth-order finite volume WENO method with $4000$ cells.
The numerical result shows that  OFSV
method does have the ability of high-order numerical scheme to
capture the extremely high frequency waves and eliminate the
spurious oscillations.

\subsection{Two-dimensional Riemann problems}
In this case, two examples of two-dimensional Riemann problems \cite{Case-lax} are
presented, including the interactions of shocks and the interaction
of contacts with rarefaction waves. The computational domain is both
$[0,1] \times[0,1]$, and the non-reflecting boundary conditions are
used in all ends. The initial conditions for the first case are
\begin{align*}
(\rho, U, V, p)= \begin{cases}
(1.5,0,0,1.5), & x>0.5, y>0.5, \\
(0.5323,1.206,0,0.3), & x<0.5, y>0.5, \\
(0.138,1.206,1.206,0.029), & x<0.5, y<0.5, \\
(0.5323,0,1.206,0.3), & x>0.5, y<0.5.
\end{cases}
\end{align*}
A complicated pattern is evolved by the interaction between the four
initial shock waves. The density distributions and the local
enlargements are showed at $t=0.3$ in Figure.\ref{riemann-1} with
$400 \times 400$ and $800 \times 800$ uniform SVs. The numerical
results show that the small scale flow structures are well captured
by the current scheme.

\begin{figure}[!h]
\centering
\includegraphics[width=0.475\textwidth]{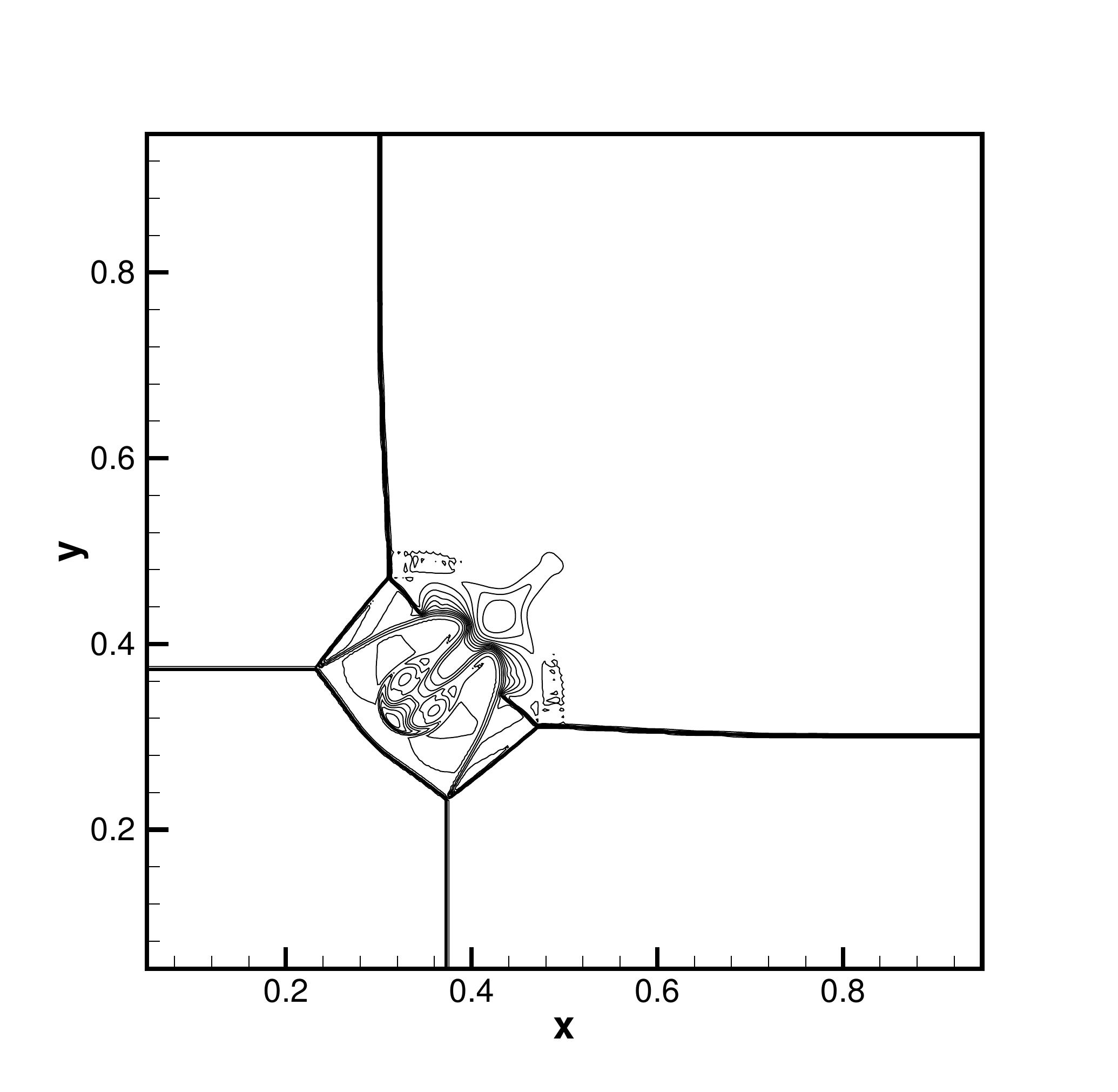}
\includegraphics[width=0.475\textwidth]{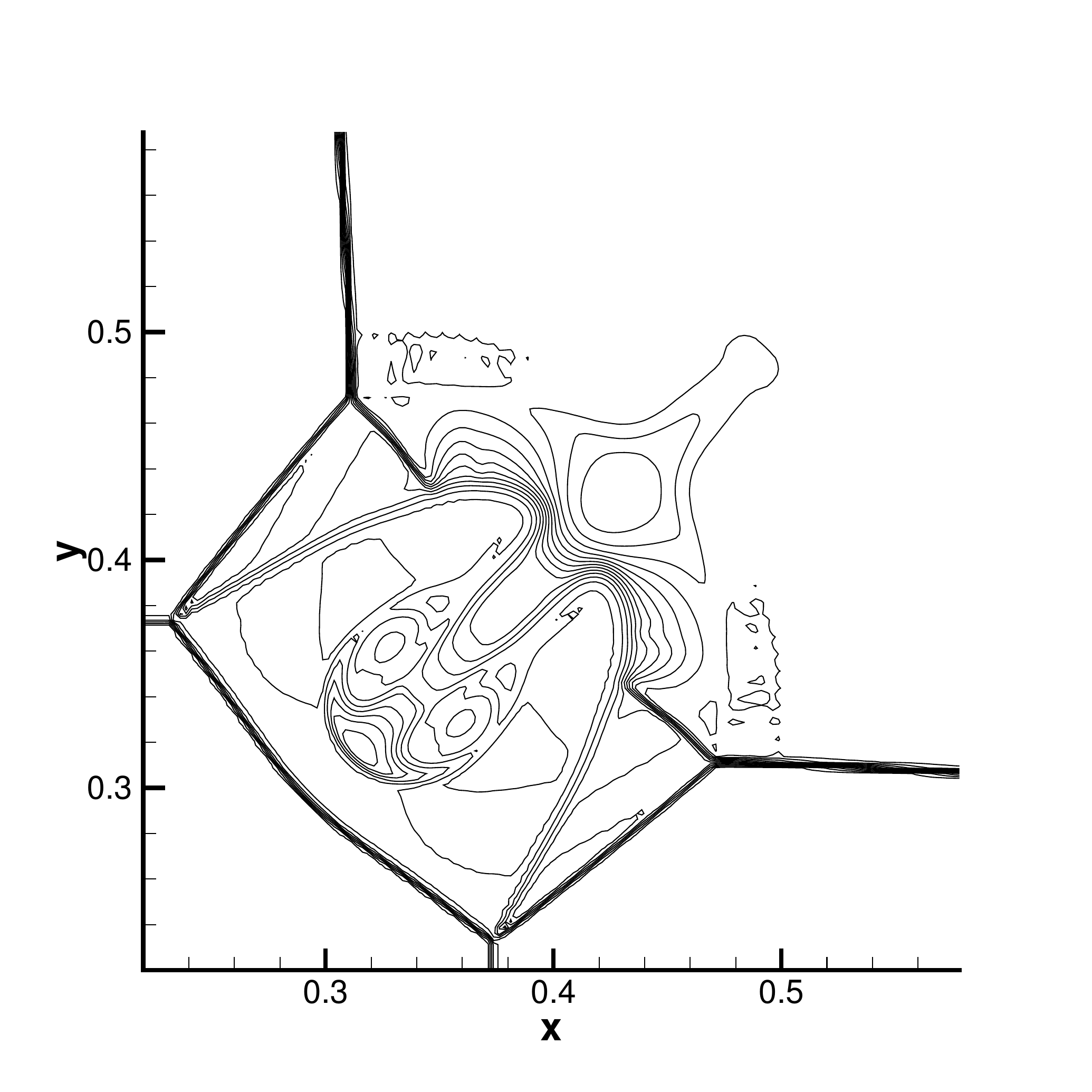}
\includegraphics[width=0.475\textwidth]{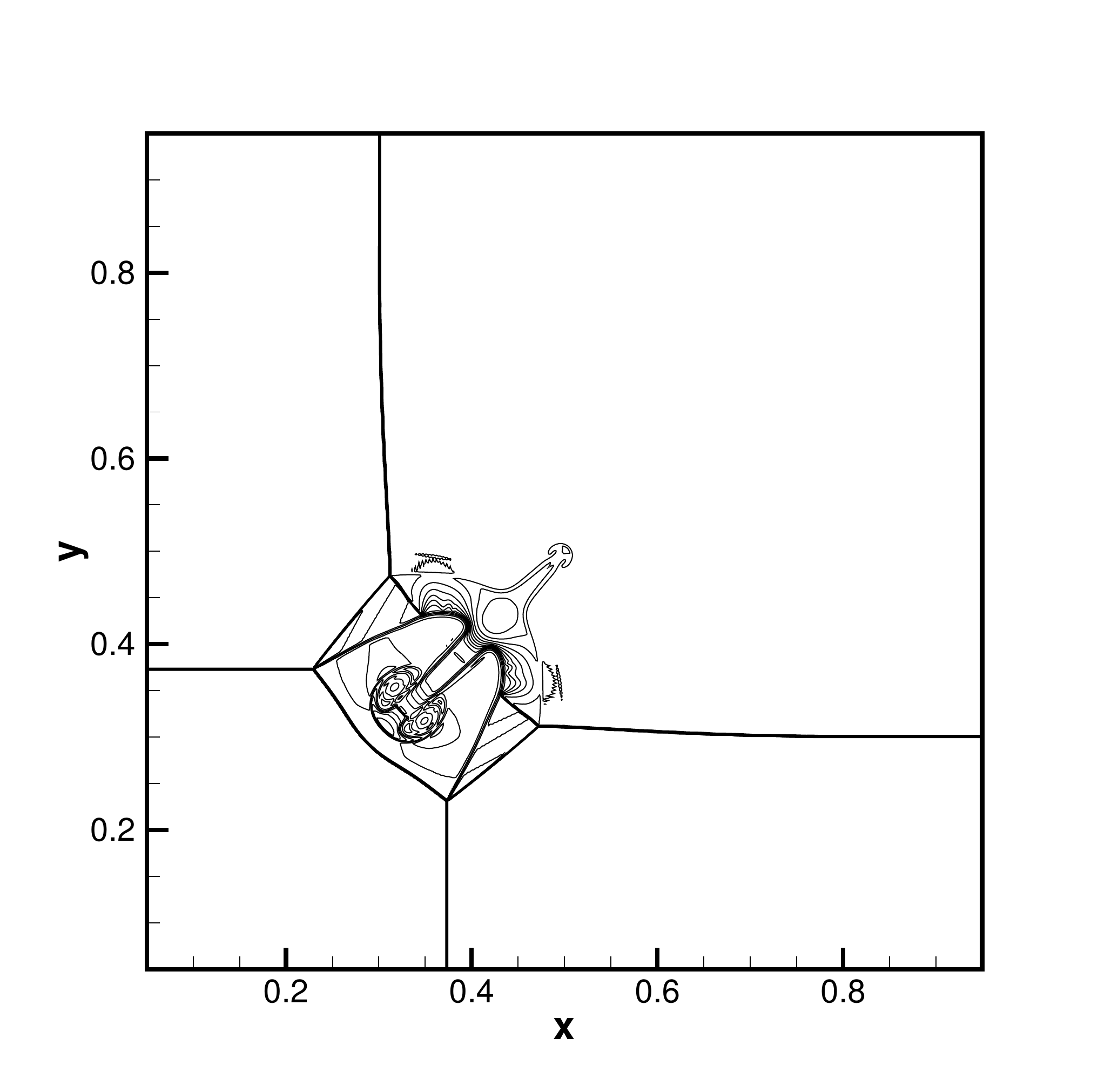}
\includegraphics[width=0.475\textwidth]{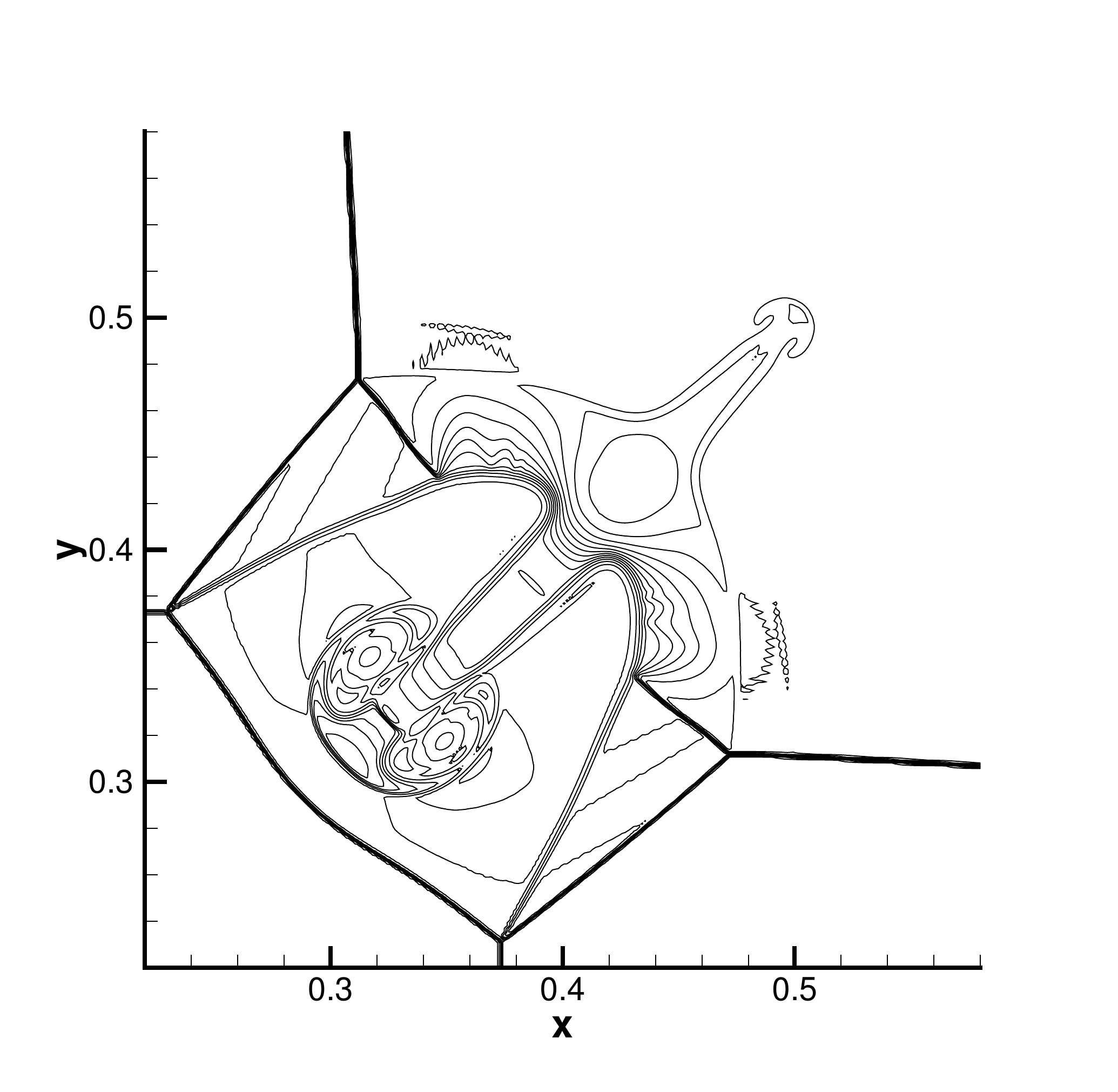}
\caption{\label{riemann-1} Two-dimensional Riemann problem: the density distributions and local enlargements for
 the first case  at $t = 0.3$ with $400\times 400$ (top) and $800 \times 800$ (bottom) uniform SVs.}
\end{figure}

\begin{figure}[!h]
\centering
\includegraphics[width=0.475\textwidth]{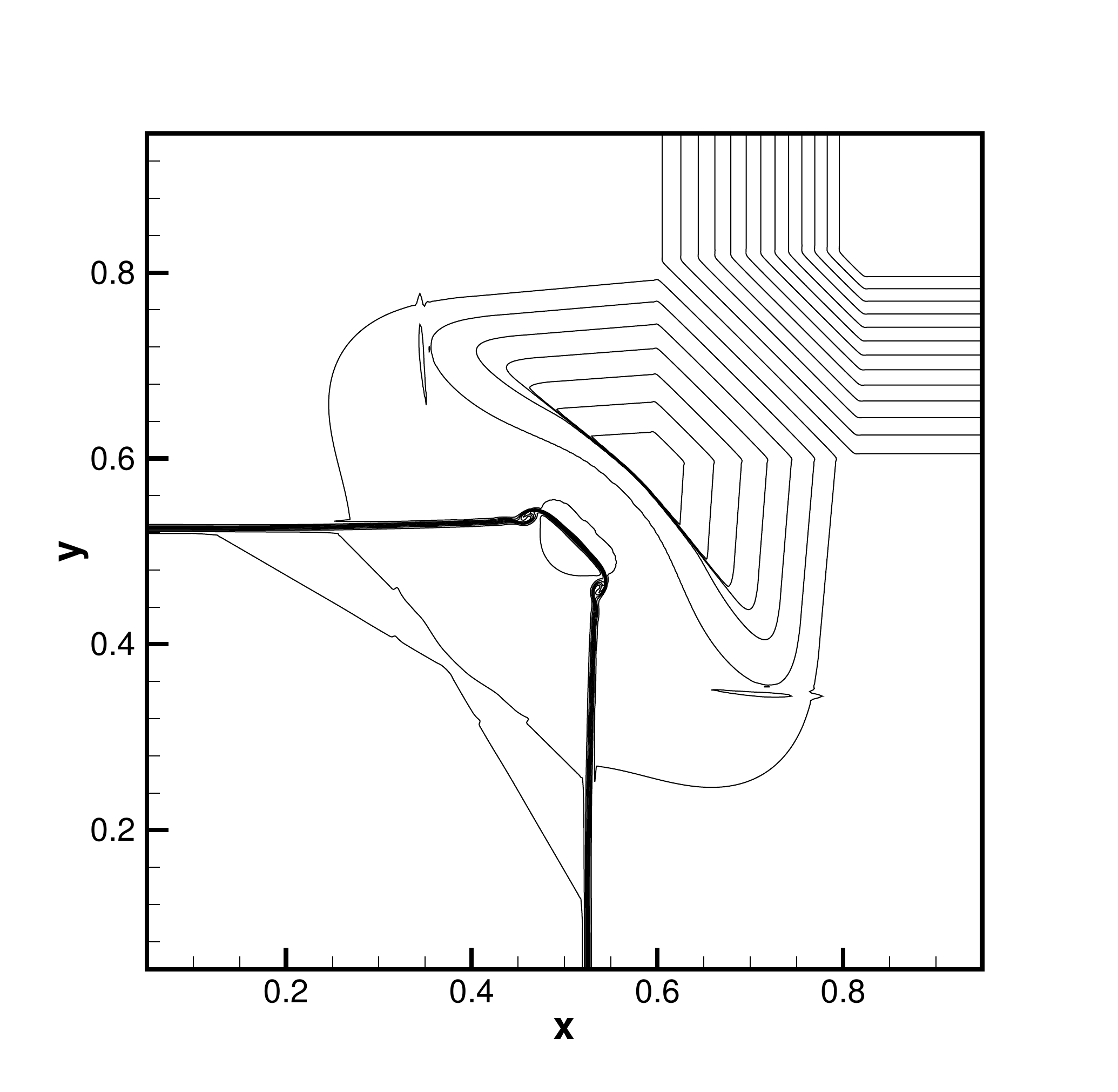}
\includegraphics[width=0.475\textwidth]{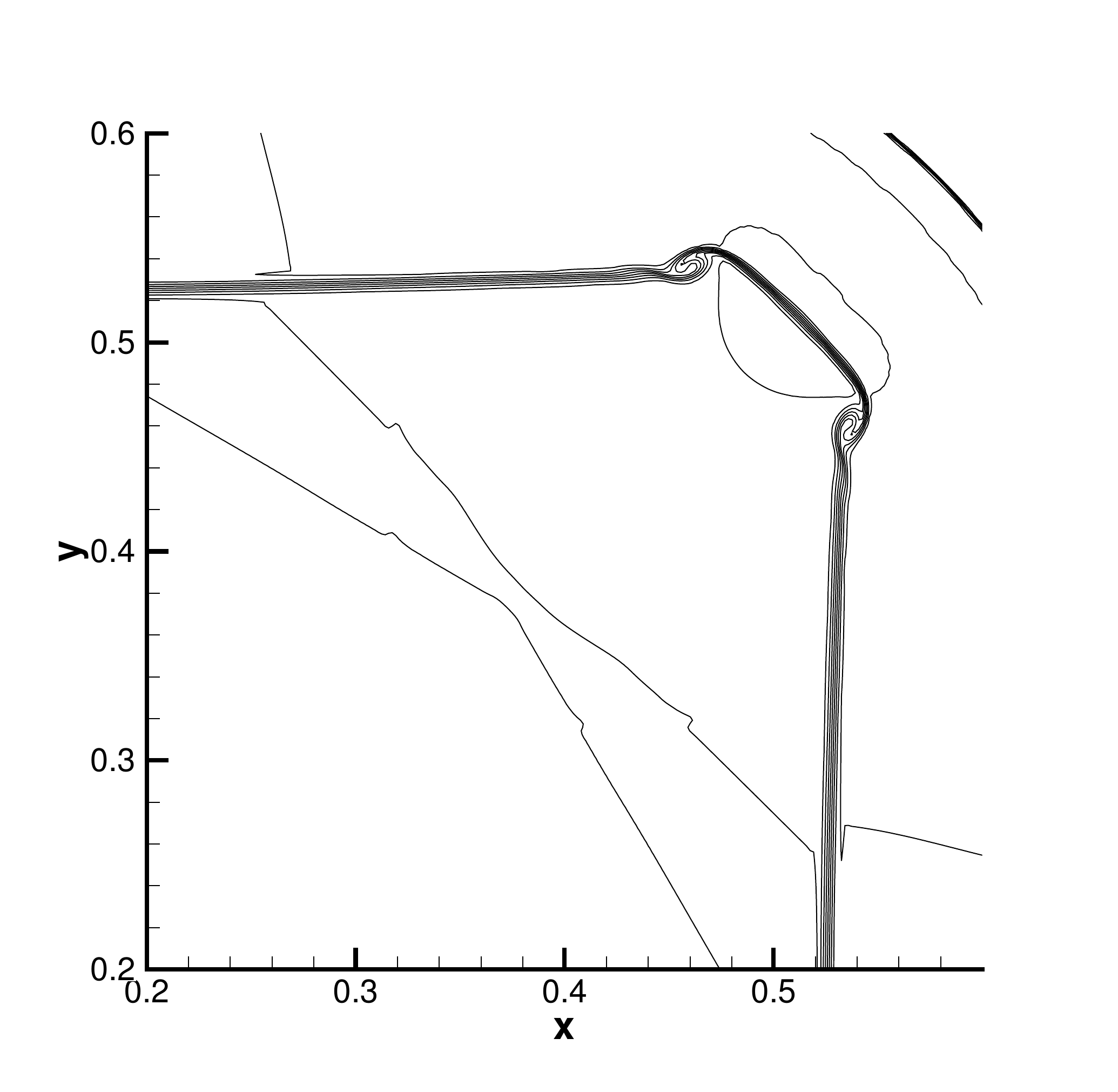}
\includegraphics[width=0.475\textwidth]{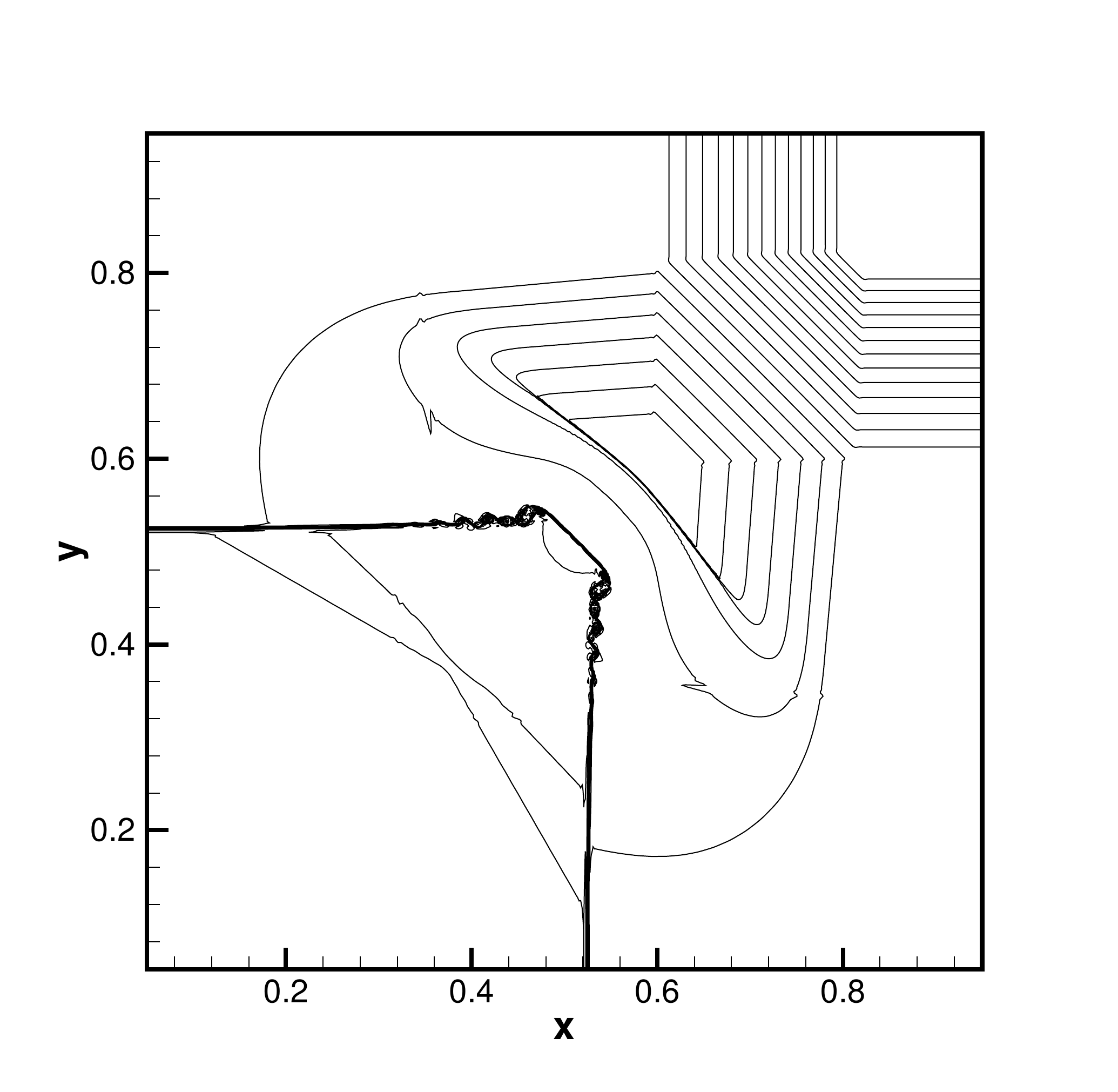}
\includegraphics[width=0.475\textwidth]{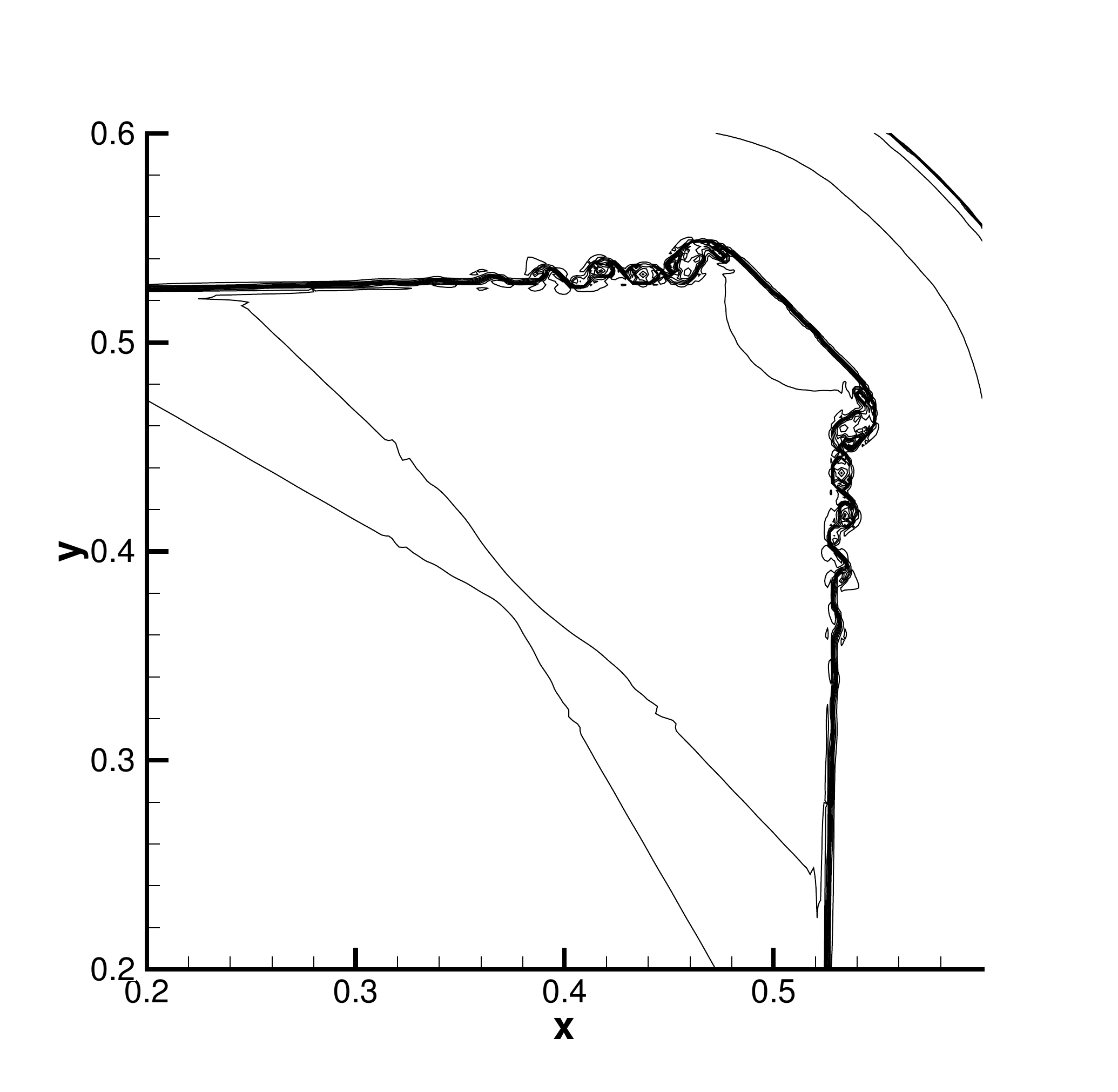}
\caption{\label{riemann-2} Two-dimensional Riemann problem:  the density distributions and local enlargements for the second case
at $t = 0.25$ with $300\times 300$ (top) and $600\times 600$ (bottom) uniform SVs.}
\end{figure}

The second case is to simulate the interaction between the interaction of two contacts with two rarefaction waves, and the initial conditions are
given as
\begin{align*}
(\rho, U, V, p)= \begin{cases}
(1,0.1,0.1,1), & x>0.5, y>0.5,\\
(0.5197,-0.6259,0.1,0.4), & x<0.5, y>0.5,\\
(0.8,0.1,0.1,0.4), & x<0.5, y<0.5,\\
(0.5197,0.1,-0.6259,0.4), & x>0.5, y<0.5.
\end{cases}
\end{align*}
The density distributions and the local enlargements at $t=0.25$  are
given in Figure.\ref{riemann-2} with  $300 \times 300$ and $600
\times 600$ uniform SVs. The results validate the good behavior of
the current method by which the roll-up is well captured.

\begin{figure}[!h]
\centering
\includegraphics[width=0.49\textwidth]{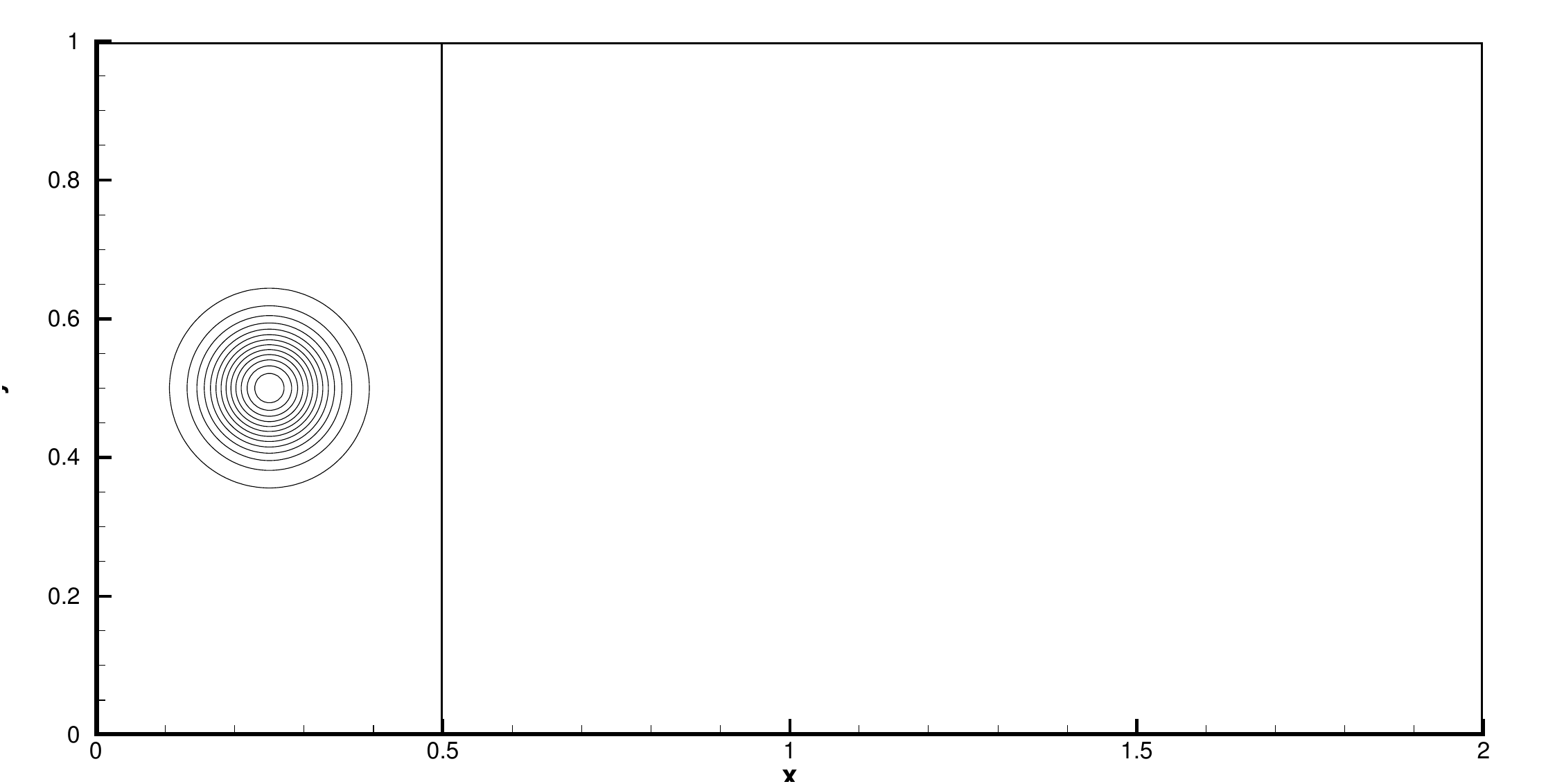}
\includegraphics[width=0.49\textwidth]{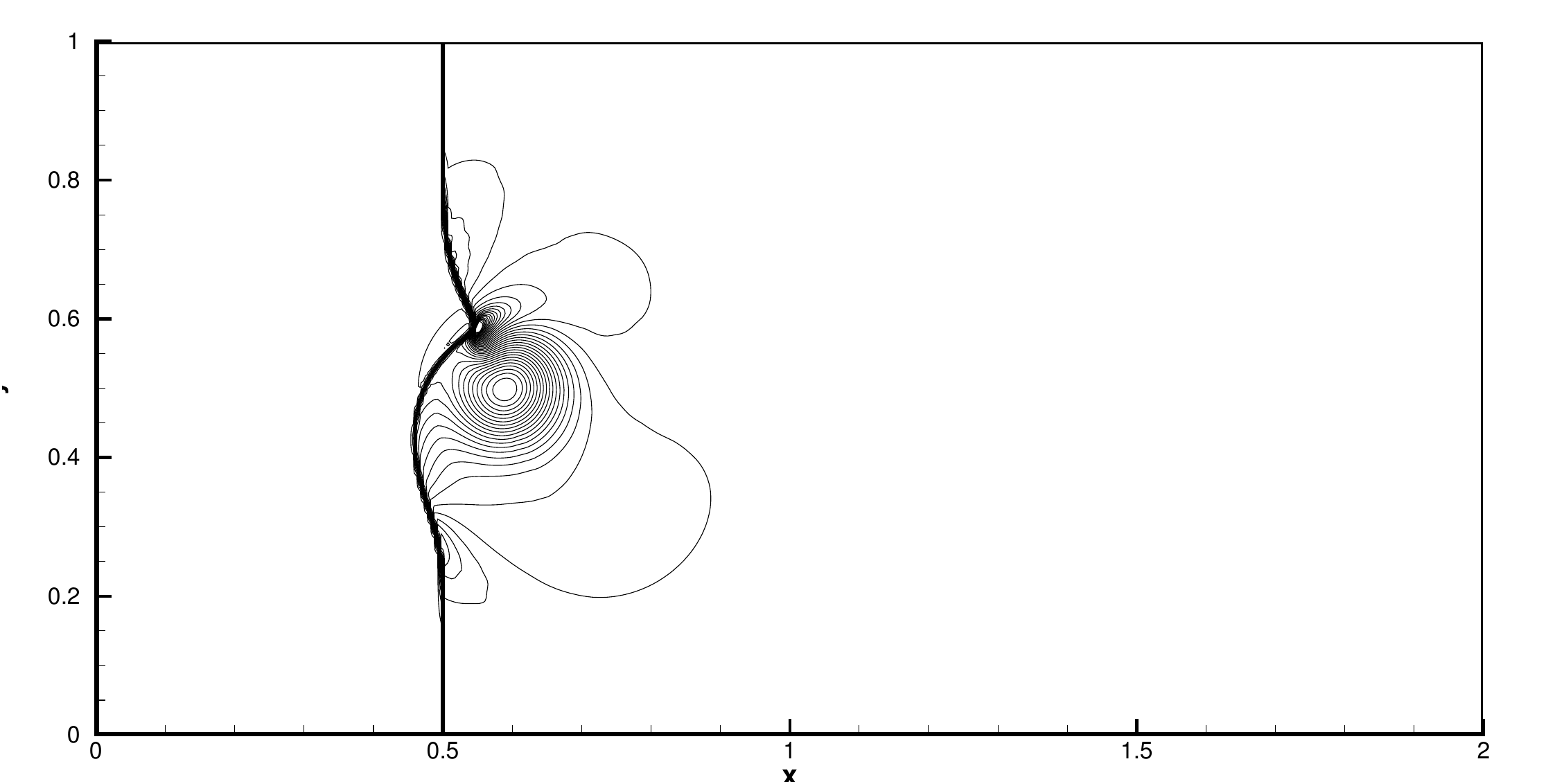}
\includegraphics[width=0.49\textwidth]{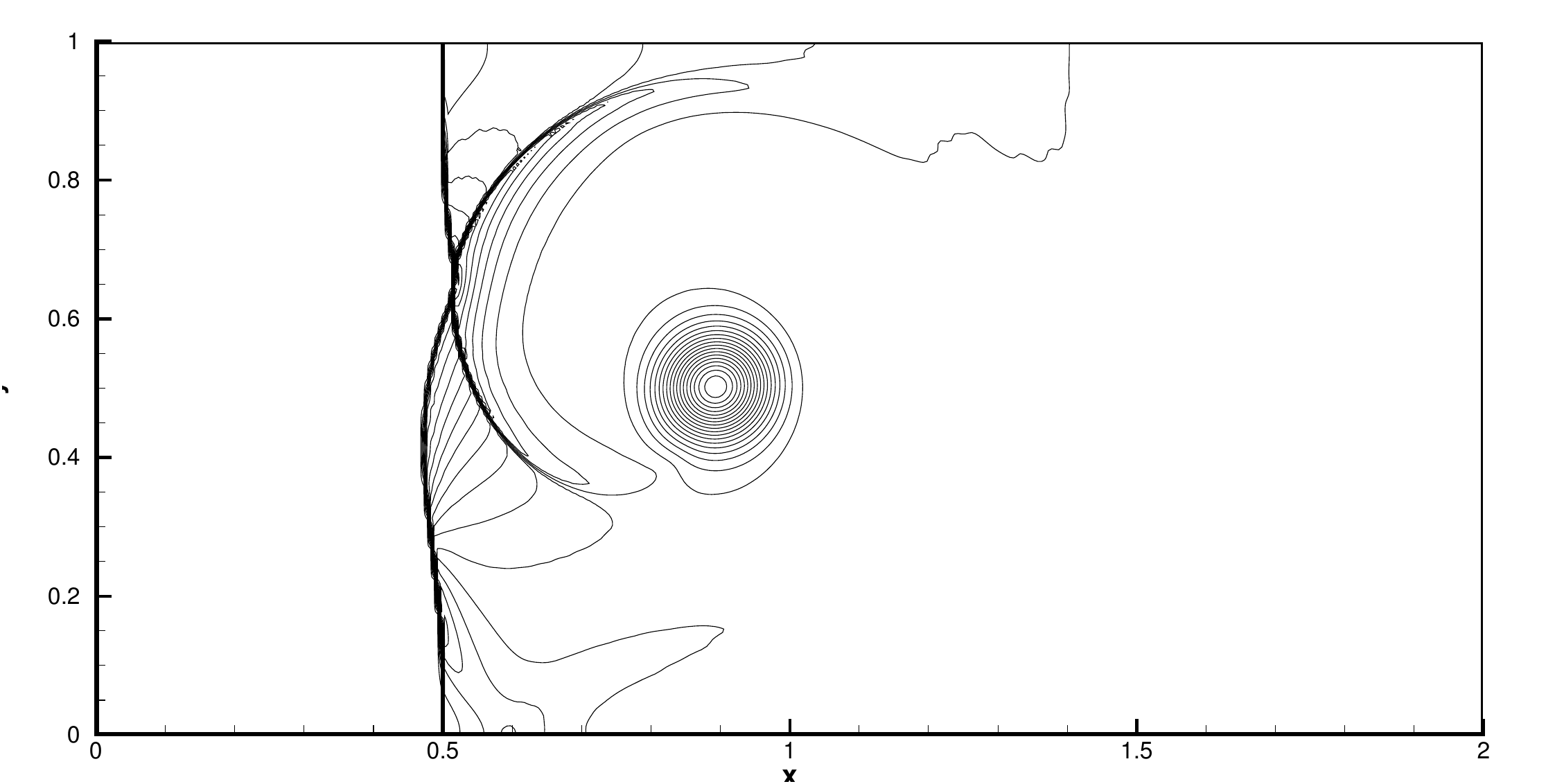}
\includegraphics[width=0.49\textwidth]{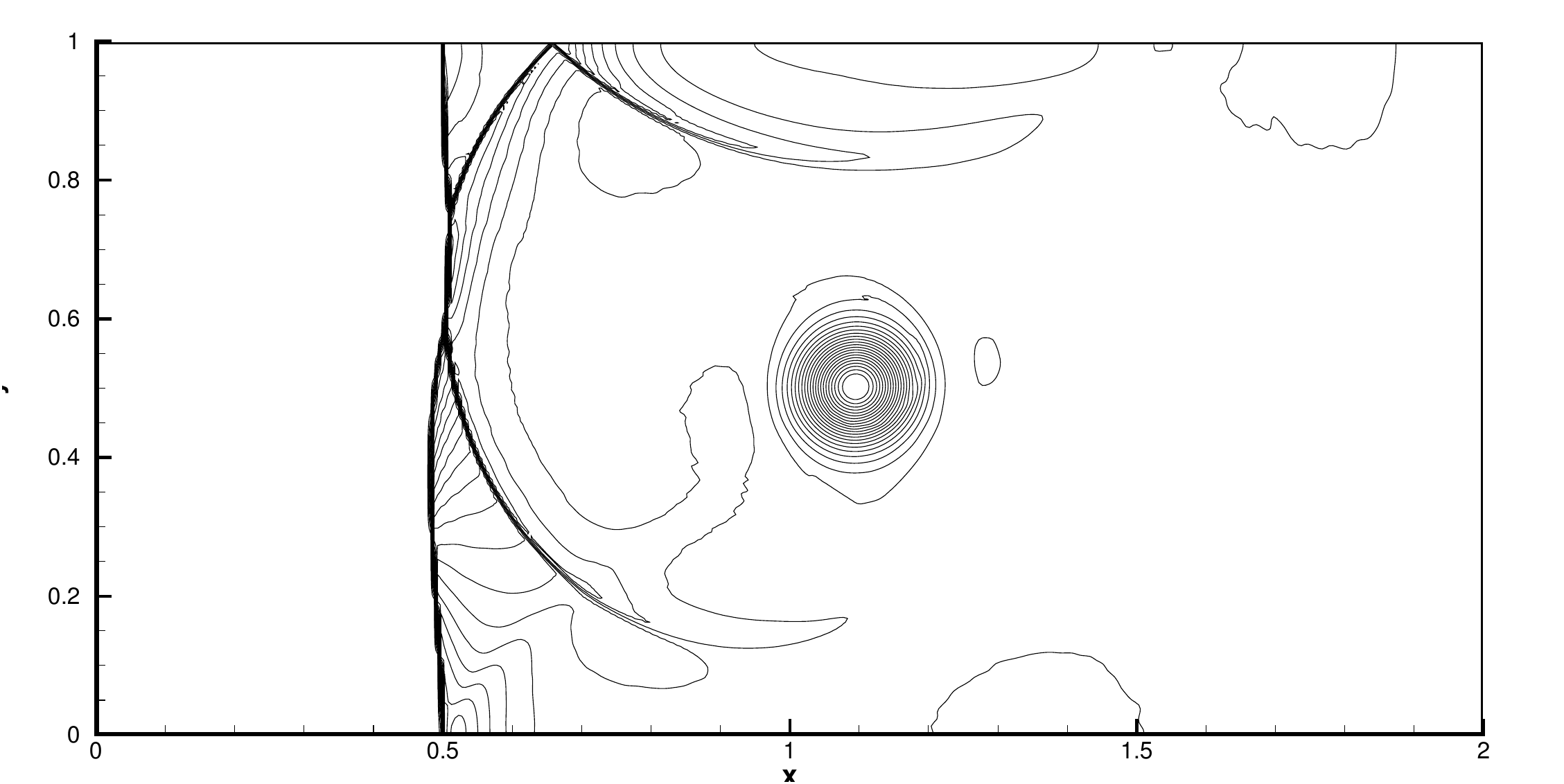}
\caption{\label{shock-vortex-1} Shock vortex interaction: the pressure distributions at $t = 0, 0.3, 0.6$ and $0.8$ with $400\times 200$ cells.}
\centering
\includegraphics[width=0.6\textwidth]{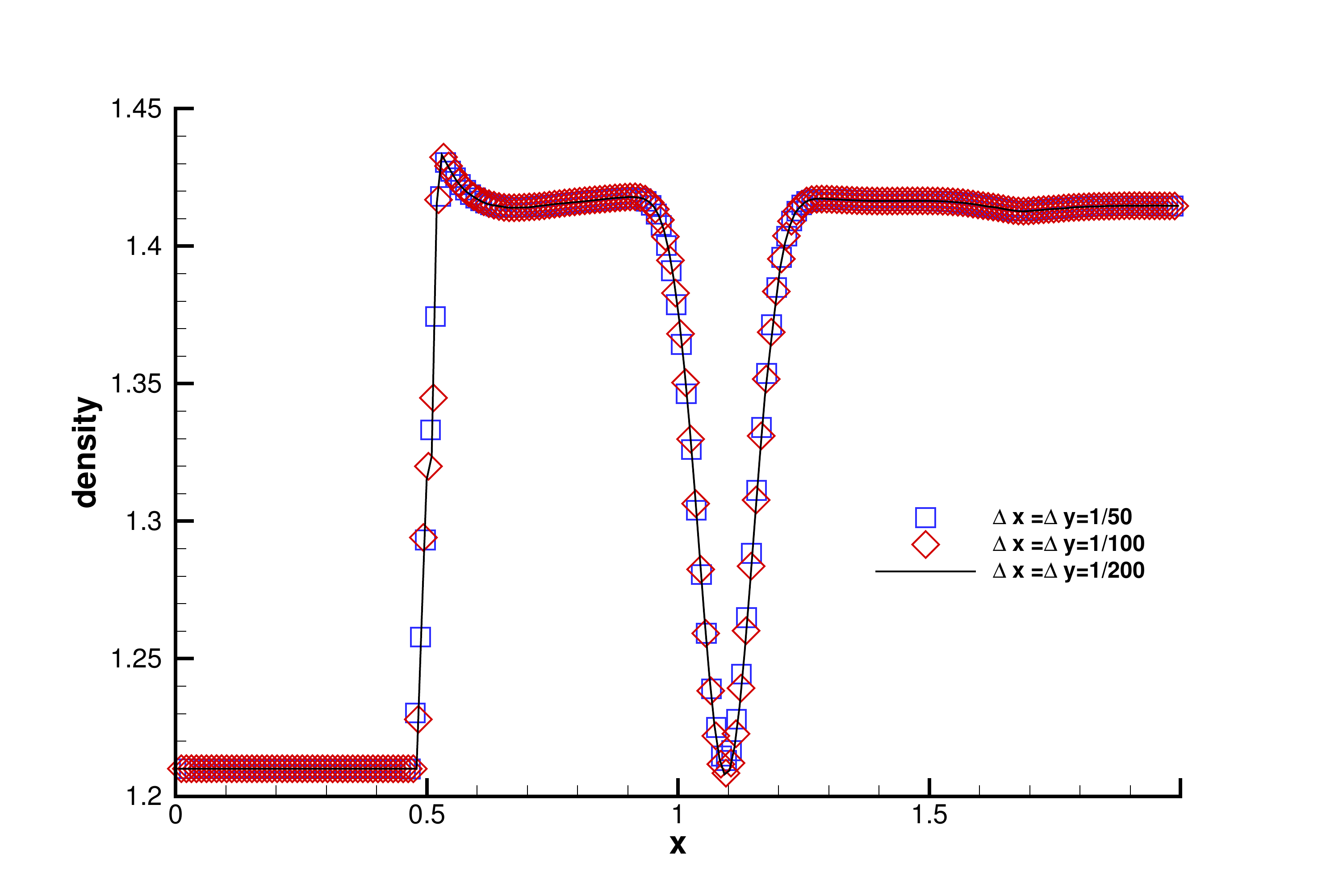}
\caption{\label{shock-vortex-2} Shock vortex interaction: the density distributions at $t = 0.8$ along the horizontal 
symmetric line $y = 0.5$ with mesh size $\Delta x = \Delta  y = 1/50,1/100,1/200$.}
\end{figure}

\subsection{Shock-vortex interaction}
The interaction between a stationary shock and a vortex for the
inviscid flow \cite{WENO-JS} is presented. The computational domain
is taken to be $[0,2] \times[0,1]$. A stationary Mach $1.1$ shock is
positioned at $x=0.5$ and vertical to the $x$-axis. The left
upstream state is $(\rho, U, V, p)=\left(M a^2, \sqrt{\gamma},
0,1\right)$, where $M a$ is the Mach number. A slight vortex
perturbation centered at $\left(x_c, y_c\right)=(0.25,0.5)$ is added
to the mean flow with the velocity $(U,V)$, temperature $T=p /
\rho$, and entropy $S=\ln \left(p / \rho^\gamma\right)$, expressed
as
\begin{equation*}
(\delta U, \delta V)=\kappa \eta e^{\mu\left(1-\eta^2\right)}(\sin \theta,-\cos \theta),
\end{equation*}
and
\begin{equation*}
\delta T=-\frac{(\gamma-1) \kappa^2}{4 \mu \gamma} e^{2 \mu\left(1-\eta^2\right)}, \delta S=0,
\end{equation*}
where $\eta=r / r_c$,
$r=\sqrt{\left(x-x_c\right)^2+\left(y-y_c\right)^2}$, $\kappa$
implies the strength of the vortex, $\mu$ controls the decay rate of
the vortex, and $r_c$ is the critical radius of the vortex with
maximum strength. In the computation,  these parameters are taken as
$\kappa=0.3, \mu=0.204$, and $r_c=0.05$. The reflecting boundary
conditions are used on the top and bottom boundaries. The inflow and
outflow boundary conditions are used for the left and right
boundaries. This case is tested by the uniform SVs with $\Delta
x=\Delta y=1 / 50,1 / 100$ and $1 / 200$. The pressure distributions
with $\Delta x=\Delta y=1 / 200$ at $t=0, 0.3, 0.6$ and $0.8$ are
shown in Figure.\ref{shock-vortex-1}. By $t=0.8$, one branch of the
shock bifurcations has reflected on the upper boundary and this
reflection is well captured. The detailed density distributions
along the center horizontal line with mesh size $\Delta x=\Delta
y=1/50,1/100$ and $1/ 200$ at $t=0.8$ presented in
Figure.\ref{shock-vortex-2} are convergent. Satisfactory results are
obtained and the accuracy of the scheme is well demonstrated.

 \begin{figure}[!h]
\centering
\includegraphics[width=0.95\textwidth]{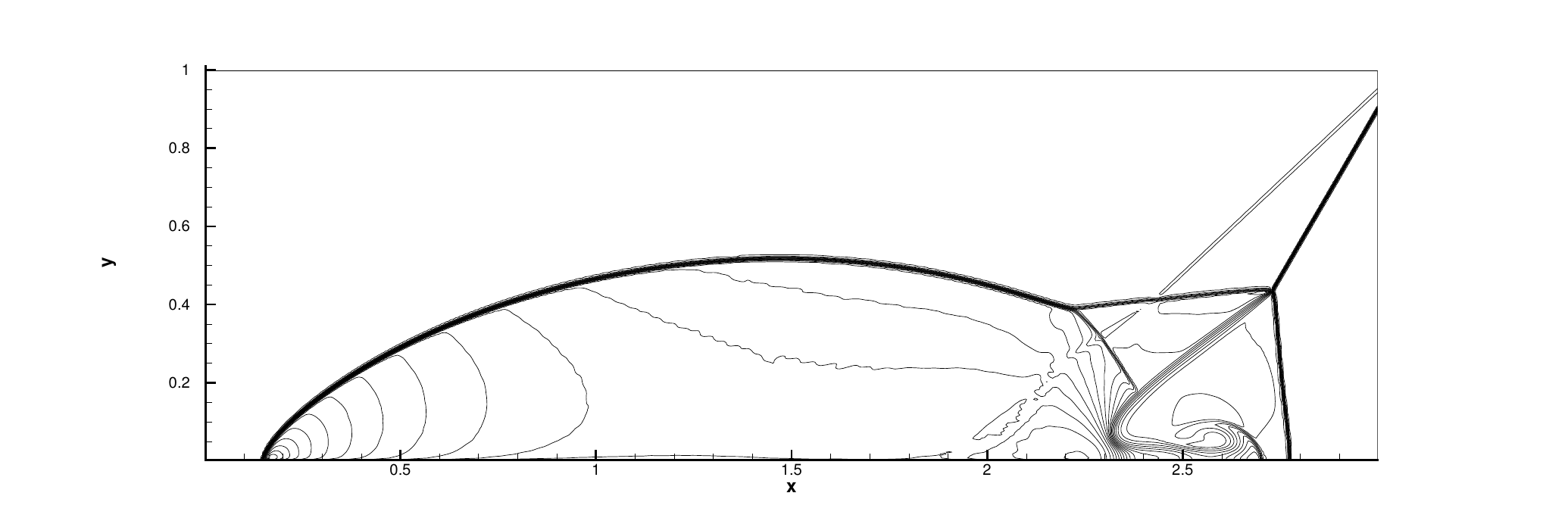}
\includegraphics[width=0.95\textwidth]{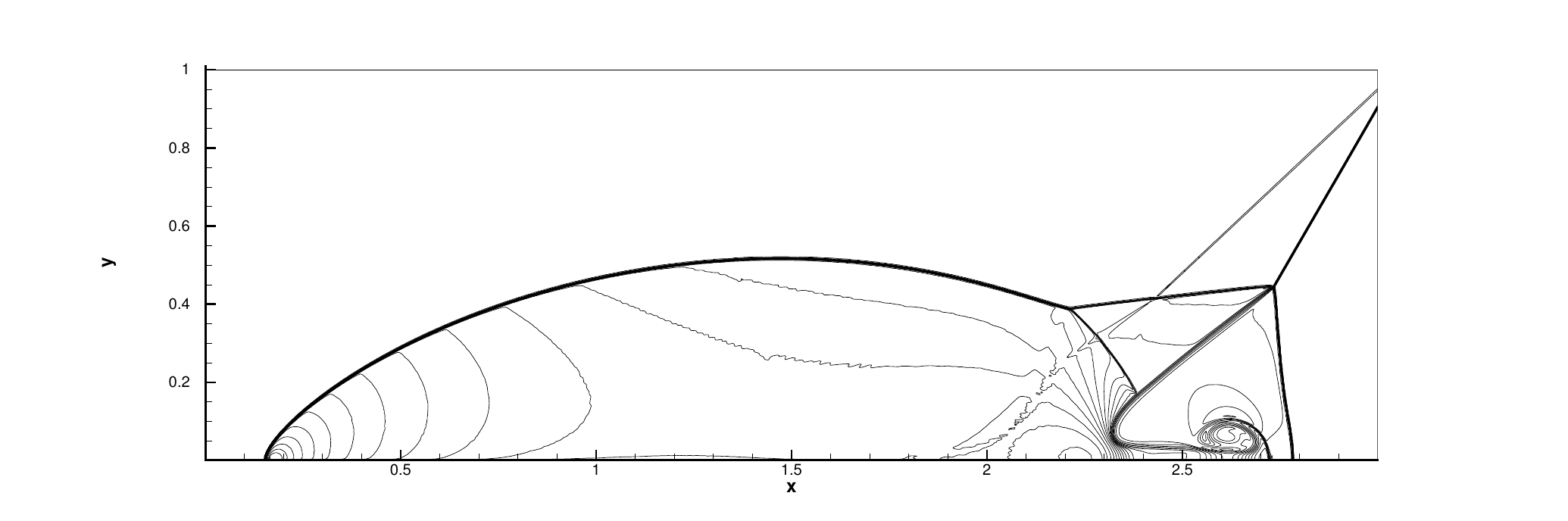}
\caption{\label{double-mach-1} Double Mach reflections: the density distributions at $t=0.2$ with 
$800 \times 200$ (top) and $1600 \times 400$ (bottom) uniform SVs.}
\end{figure}

\subsection{Double Mach reflection}
This problem was first proposed by Woodward and Colella
\cite{Case-Woodward} for the inviscid flow. The computational domain
is $[0,4] \times[0,1]$ and a reflecting solid wall lies along the
bottom of the computational domain starting from $x = 1/6$.
Initially a right-moving Mach 10 shock  makes a $60^{\circ}$ angle
with the reflecting wall starting at $(x,y)=(1/6,0)$ towards the top
of the computational domain. The initial pre-shock and post-shock
conditions are
\begin{equation*}
(\rho, U, V, p)=
\begin{cases}\left(8,4.125 \sqrt{3},-4.125,116.5\right), & x<\frac{1}{6}+\frac{1}{\sqrt{3}}y, \\
\left(1.4,0,0,1\right), & x>\frac{1}{6}+\frac{1}{\sqrt{3}}y.
\end{cases}
\end{equation*}
The inflow and outflow boundary conditions are adopted for the left
and right boundaries, respectively. The reflecting boundary
condition is used at the solid wall, and the exact post-shock
condition is imposed for the rest of the bottom boundary. At the top
boundary, the flow variables are set to describe the exact motion of
the Mach 10 shock. The density distributions and their local
enlargement with $800\times 200$ and $1600\times 400$ uniform SVs
at $t=0.2$  are presented in Figure.\ref{double-mach-1} and
Figure.\ref{double-mach-2},  respectively. The flow structure under
the triple Mach stem can be resolved clearly. However, the shear
layer seems to be smeared by the damping term, and more delicate
design of damping term still needs to be investigated in the future.

\begin{figure}[!h]
\centering
\includegraphics[width=0.45\textwidth]{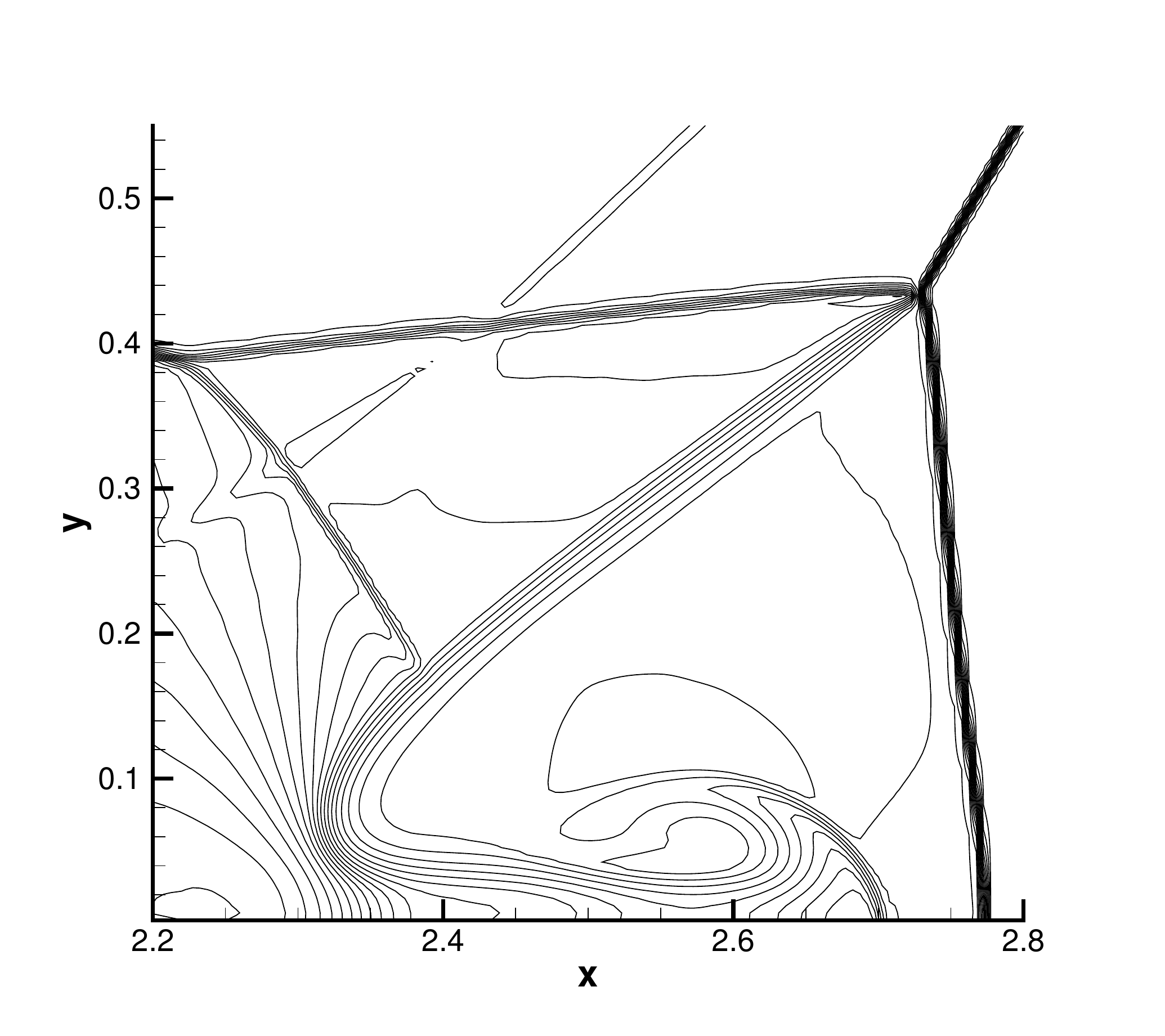}
\includegraphics[width=0.45\textwidth]{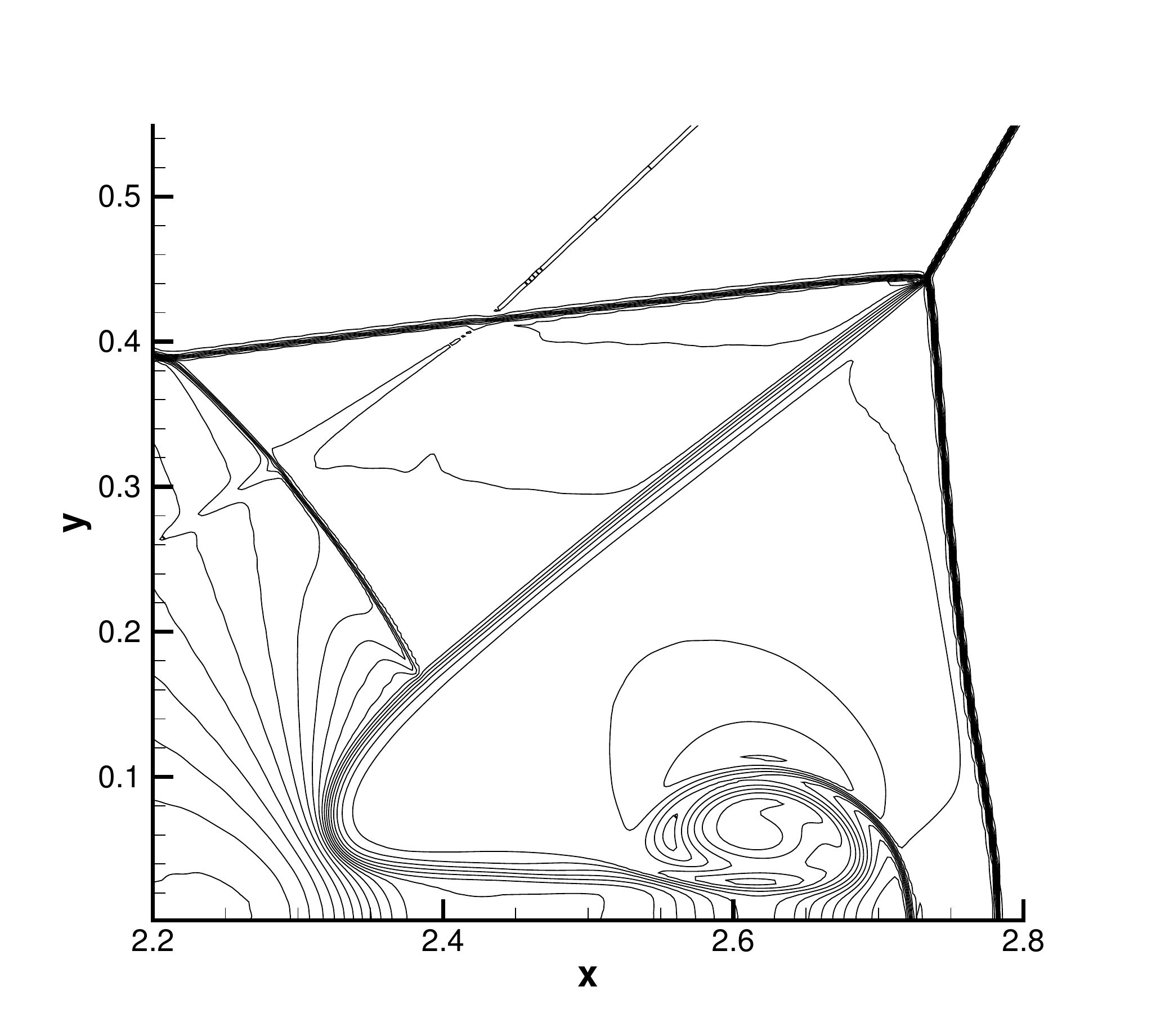}
\caption{\label{double-mach-2} Double Mach reflections: the local enlargement of density distributions  at $t=0.2$
with $800 \times 200$ (left) and $1600 \times 400$ (right) uniform SVs.}
\end{figure}

\section{Conclusion}
In this paper, an oscillation-free spectral volume method  is proposed for the systems of hyperbolic conservation laws.  
To suppress the oscillations near discontinuities,  a damping term is introduced to the standard spectral volume method. 
A mathematical  proof is provided to show that the proposed OFSV is stable and has optimal convergence rate  
and some excepted superconvergence properties when applied to linear scalar equations. Numerical experiments are presented to demonstrate 
the accuracy and capabilities of resolving discontinuities for the current scheme. 
The excepted order of accuracy of the current SV scheme is obtained,  
and the oscillations can be well controlled even for the problem with strong discontinuities.

% \bibliographystyle{siamplain}
% \bibliography{references}

\end{document}